\newtheorem{example}[definition]{Example}
\title{The Drinker Paradox and its Dual}
\author{
	Louis Warren and  Hannes Diener and Maarten McKubre-Jordens \\
	Department of Mathematics and Statistics \\
	University of Canterbury
}
\begin{document}

\maketitle
%%%%%%%%%%%%%%%%%%%%%%%%%%%%%%%%%%%%%%%%%%%%%%%%%%%%%%%%%%%%%%%%%%%%%%%%%%%%%%%%

\begin{abstract}
The Drinker Paradox is as follows.
\begin{quote}
In every nonempty tavern, there is a person such that if that person is
drinking, then everyone in the tavern is drinking.
\end{quote}
Formally,
\[
	\exists x \big(\varphi \rightarrow \forall y \varphi[x/y]\big)
\ . \]
Due to its counterintuitive nature it is called a paradox, even though it actually is a classical tautology. However, it is not
minimally (or even intuitionistically) provable. The same can be said of its dual, which is (equivalent to) the
well-known principle of \emph{independence of premise},
\[
	\varphi \rightarrow \exists x \psi
	\ \vdash \ \exists x (\varphi \rightarrow \psi)
\]
where $x$ is not free in $\varphi$.

In this paper we study the implications of adding these and other formula
schemata to minimal logic. We show first that these principles are independent of the law of excluded middle and of each other, and second how these schemata relate to
other well-known principles, such as Markov's Principle of unbounded search, providing proofs and semantic models where appropriate.
\end{abstract}

%%%%%%%%%%%%%%%%%%%%%%%%%%%%%%%%%%%%%%%%%%%%%%%%%%%%%%%%%%%%%%%%%%%%%%%%%%%%%%%%

\section{Introduction}
Minimal logic \cite{schwichtenberg}
provides, as its name suggests, a  minimal setting for logical investigations.
Starting from minimal logic, we can get to intuitionistic logic by adding
\emph{ex falso quodlibet} (EFQ), and to classical logic, by adding \emph{double
negation elimination} (DNE).\footnote{Either as a rule, or an axiom scheme. See
below for details.} Therefore, every statement proven over minimal logic can
also be proven in intuitionistic logic and classical logic. In addition, minimal
logic has many structural advantages, and is easier to analyse. Of course, there
is a price one has to pay for working within a weak framework. The price is that
fewer well-known statements are provable outright, which leads to the question
of how they relate. This is a very similar question to the one considered in
constructive reverse mathematics (CRM; \cite{hI06a,hDHabil}), where the aim is to find some ordering in a
multitude of principles, over intuitionistic logic. CRM has been around for some
decades now, and some even trace the origins back to Brouwerian counterexamples. 
Most results in CRM are focused on analysis, where most theorems can be classified into being equivalent to about ten major principles.
It is  a natural question to ask whether we can find similar 
results in the absence of EFQ. Previous work by a subset of the authors
\cite{materialimplications} has investigated the case of propositional schemata,
but has left the predicate case untouched. Similar work can also be found in \cite{jG11,sO08}.
A more detailed approach, but again focused on the propositional case can be found in \cite{hI16}, where it was studied exactly which instances of an axiom scheme are required to prove a given instance of another axiom scheme over minimal logic.
In this paper we will make first steps in the predicate case. As is so often the case, the first-order analysis is
subtler and technically more difficult to deal with.

For the sake of brevity and readability we have only included non-trivial  proofs. The missing proofs, which are in natural deduction style, have been put into an appendix. A version of this paper including that appendix will be made available on \url{arxiv.org} under the same title.

%%%%%%%%%%%%%%%%%%%%%%%%%%%%%%%%%%%%%%%%%%%%%%%%%%%%%%%%%%%%%%%%%%%%%%%%%%%%%%%%

\section{Technical Preliminaries}
  We will generally follow the notation and definitions found in
\cite{schwichtenberg}.

An $n$-ary \emph{scheme} $\mathrm{SCH}(X_1, \dots,X_n)$ is a formula $\mathrm{SCH}$ containing $n$ propositional variables $X_1, \dots,X_n$. An instance  $\mathrm{SCH}(\Phi_1, \dots, \Phi_n)$ is obtained by replacing the variables with formulae $\Phi_1, \dots, \Phi_n $.  A scheme is \emph{derivable} in a logical system
if every instance is derivable in that system. A scheme is
\emph{minimal} (\emph{constructive}) (\emph{classical}) if it is derivable in
minimal (intuitionistic) (classical) logic.

\begin{example}
	The \emph{law of excluded middle}, $\mathrm{LEM}(\Phi) := \Phi \mor
	\lnot\Phi$ is a classical unary scheme.
\end{example}

A logical system can be extended by adding that certain schemata are derivable in
the system. In the case of natural deduction and minimal logic, an extension by
 \LEM/ is an addition of a deduction rule
\begin{prooftree}
	\AxiomC{}
	\RightLabel{$\LEMs(\alpha)$}
	\UnaryInfC{$\alpha \mor \lnot\alpha$}
\end{prooftree}
for every formula $\alpha$. This produces a subsystem of classical logic.

More general, if a formula $\Phi$ is derivable over minimal (intuitionistic) logic
extended by schemata $S_0, S_1, \dotsc, S_n$, then we write
\[
	\derives_{S_0, S_1, \dotsc S_n} \Phi
\]
($\derives_{i , S_0 , S_1 , \dotsc S_n} \Phi$).

Extending a logic by a scheme differs from allowing undischarged assumptions
of instances of the scheme. For example, it should follow from \LEM/ that every
predicate is decidable. Consider the proof of $\derives_{\LEMs} \forallx (Px
\mor \lnot Px)$:
\begin{prooftree}
\AxiomC{}
\RightLabel{LEM}
\UnaryInfC{$P{x} \Tor \Tneg{P{x}}$}
\RightLabel{\Tunivintro}
\UnaryInfC{$\Tforall_{x} \left(P{x} \Tor \Tneg{P{x}}\right)$}
\end{prooftree}
The proof uses $\LEMi{Px}$. However, 
\[ 
\LEMi{Px} \nderives \forallx(Px \mor \lnot Px) 
\ ,\]
since the rule \Tunivintro{} requires that $x$ is not free in any open
assumptions.\footnote{If we defined \LEM/ as the axiom scheme $\forall \vec{x} P\vec{x} \lor \lnot P\vec{x}$, there would be no difference between adding it as a rule or an assumption. This trick is the same as used in \cite[page 14]{schwichtenberg} for \EFQ/ and stability.}

It is trivial to check that the following holds.
\begin{proposition}
Define $\mathrm{DNE}(\Phi) := \lnot\lnot\Phi \mimp \Phi$, and $\EFQs(\Phi) :=
\bot \mimp \Phi$. For all (finite) collections of schemata $S$ and $T$,
\[
	S \derives_i T \iff S \derives_\EFQs T
\]
and
\[
	S \derives_c T \iff S \derives_\DNEs T
\ . \]
\end{proposition}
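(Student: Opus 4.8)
The plan is to observe that the statement is, at bottom, a restatement of the definitions of intuitionistic and classical logic as extensions of minimal logic: intuitionistic logic \emph{is} minimal logic together with $\EFQs$, and classical logic \emph{is} minimal logic together with $\DNEs$. Consequently the two systems named on either side of each biconditional have the same rules and hence derive exactly the same formulae. The only genuine content is the bookkeeping about schemata-as-rules raised in the discussion above, so the bulk of the work is simply unwinding definitions.

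First I would record the elementary fact that, over minimal logic, adding a scheme as a nullary deduction rule (an axiom $\alpha$ for every instance) is interderivable with its usual elimination behaviour: the axiom $\bot \mimp \alpha$ together with implication elimination yields the customary ex falso step ``from $\bot$ infer $\alpha$'', and conversely that step together with implication introduction yields $\bot \mimp \alpha$ from no open assumptions. The point to stress is that each instance of $\EFQs$ (and of $\DNEs$) is obtained with \emph{no} open assumptions, so the variable-restriction issue flagged for $\LEMi{Px}$ above never arises, and an ensuing $\forall$-introduction is unobstructed; this is exactly the trick of the footnote, and it identifies ``minimal logic extended by $\EFQs$'' with the standard presentation of intuitionistic logic. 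With this in hand the first biconditional follows: if one keeps the two presentations formally distinct, then for $(\Rightarrow)$ one replaces each intuitionistic ex falso step in a witnessing derivation by an appeal to the $\EFQs$ axiom followed by implication elimination, while for $(\Leftarrow)$ one observes that any derivation in minimal logic extended by $S$ and $\EFQs$ is a fortiori a derivation in intuitionistic logic extended by $S$, since intuitionistic logic contains minimal logic and derives every instance of $\EFQs$. Hence $S \derives_i T \iff S \derives_\EFQs T$.

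The second biconditional is handled verbatim with $\DNEs$ in place of $\EFQs$, since classical logic is minimal logic plus $\DNEs$. If desired one can additionally note that $\DNEs$ already subsumes $\EFQs$ over minimal logic: from $\bot$ one derives $\lnot\lnot\Phi$ minimally (assume $\lnot\Phi$ and reuse the available $\bot$, then discharge), whence $\DNEs$ delivers $\Phi$, so classical logic does indeed sit above intuitionistic logic as expected. The one step requiring care, and the reason the proposition is not entirely vacuous, is the scheme-as-rule formalisation just described; I expect no real obstacle beyond making that identification precise, everything else being routine verification.
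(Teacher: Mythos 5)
Your proposal is correct and is essentially the intended argument: the paper gives no proof at all (it declares the proposition ``trivial to check''), and what you make precise --- that scheme instances added as rules are derived with \emph{no} open assumptions, so the rule form and the axiom form of ex falso (resp.\ double negation elimination) are interderivable via $\Tarrowelim$/$\Tarrowintro$ and no eigenvariable condition is ever obstructed --- is exactly the ``trick'' the paper's earlier footnote attributes to \cite{schwichtenberg}. Your closing remark that $\DNEs \reduces \EFQs$ over minimal logic agrees with the paper's own appendix proposition and correctly covers the reading of classical logic as intuitionistic logic plus $\DNEs$ rather than merely minimal logic plus $\DNEs$.
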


A preorder $\supset$ may be defined on finite collections of schemata by
considering derivability over extensions of minimal logic.

\begin{definition}
For schemata $S_0, S_1, \dotsc S_n$ and $m$-ary scheme $T$, we write
\[ S_0, S_1, \dotsc S_n \supset T \] if
\[
	\derives_{S_0, S_1, \dotsc, S_n} T(\alpha_0, \dotsc \alpha_m)
\]
for all formulae $\alpha_0, \dotsc, \alpha_n$. We say that $T$ is
\emph{reducible} to $S_0, \dotsc S_n$. Intuitively, a proof using the scheme $T$
can be replaced by a proof using $S_0, S_1, \dotsc S_n$. The relation
`$\supset$' extends to multiple schemata on the right-hand side in the obvious
way.
\end{definition}

To demonstrate that $A_0, A_1, \dotsc A_n \not\supset B$, we exhibit a
Kripke model (see Section 5.3 of \cite{dD04} for more details on Kripke semantics\footnote{While technically speaking the Kripke semantics described in \cite{dD04} are for the intuitionistic case, we can use them in the minimal one, by not forcing and condition on $\bot$---that is treating it just like some fixed propositional symbol.}) in which an instance of $B$ does not hold, but where $A_0, \dotsc
A_n$ hold for every formula. A full model, as described in
\cite{materialimplications}, is sufficient. A full model is one where we can freely create predicates, as long as they satisfy the usual monotonicity requirements. So it is full in the sense that everything that potentially is the interpretation of a predicate actually is one. In Section \ref{Sec:nonfullmodels} we will have to consider non-full models. An \emph{intuitionistic Kripke model} is one where $\bot$ is never forced at any world. These are exactly the Kripke models that force \EFQ/.

In given Kripke diagrams, each state $A$ has its labelled propositions on the
right, and the domain (denoted $T(A)$) on the left. Where the domain is given as
$\naturals$, it should be interpreted as the countable set of constants $\{0, 1,
\dotsc\}$, without the addition of any function terms.

\begin{proposition}
If $\derives_{B_0, \dotsc B_m} \Phi$, and $A_0, \dotsc A_n \supset 
B_0, \dotsc B_m$, then $\derives_{A_0, \dotsc A_n} \Phi$.
\end{proposition}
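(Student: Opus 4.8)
The plan is to reason directly on derivation trees, turning a proof that uses the schema-rules $B_0, \dots, B_m$ into one that uses only $A_0, \dots, A_n$ by replacing each appeal to a $B_j$ with its reduction. First I would fix a natural-deduction derivation $D$ witnessing $\derives_{B_0, \dots, B_m} \Phi$. As described above, the additional schemata appear in such a derivation as zero-premise rules, so each application of some $B_j$ occurs at a leaf of $D$ and has as its conclusion a particular instance $B_j(\gamma_1, \dots, \gamma_{k_j})$.

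Next, for each such leaf I would invoke the hypothesis $A_0, \dots, A_n \supset B_0, \dots, B_m$. By the definition of $\supset$ this yields, for every instance $B_j(\gamma_1, \dots, \gamma_{k_j})$ occurring in $D$, a derivation $E$ of that same formula over minimal logic extended by $A_0, \dots, A_n$. Crucially, $E$ has no undischarged assumptions, since $\derives_{A_0, \dots, A_n} B_j(\gamma_1, \dots, \gamma_{k_j})$ asserts outright derivability. I would then graft each such $E$ onto $D$ in place of the corresponding leaf. The resulting tree uses only the rules of minimal logic together with $A_0, \dots, A_n$, has the same open assumptions as $D$, and still concludes $\Phi$, so it witnesses $\derives_{A_0, \dots, A_n} \Phi$.

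The one point requiring care — and the only place the argument could fail — is the eigenvariable conditions on $\forall$-introduction and $\exists$-elimination at nodes of $D$ lying below a grafting site, each of which demands that its eigenvariable be free in no open assumption. Since every grafted derivation $E$ is closed, it contributes nothing to the open-assumption set of any node beneath it, so each such condition that held in $D$ continues to hold after grafting. Any eigenvariables occurring internally to an $E$ can be renamed apart from the variables of $D$ and of the other grafted subderivations to prevent accidental capture, which leaves the conclusion $B_j(\gamma_1, \dots, \gamma_{k_j})$ unchanged. This surgery could equally be phrased as an induction on the structure of $D$; in either formulation, once the variable bookkeeping is handled the construction is entirely routine.
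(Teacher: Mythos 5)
Your proof is correct and follows essentially the same route as the paper's: replace each application of a rule $B_j$ (a zero-premise leaf) with a closed derivation of that instance over minimal logic extended by $A_0,\dotsc,A_n$. Your additional attention to the eigenvariable conditions and renaming is careful bookkeeping that the paper's one-line proof leaves implicit, but it is the same argument.
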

\begin{proof}
	Consider a natural deduction proof of $\derives_{B_0 + \dotsb + B_m} \Phi$.
	For each $k$, replace each instance of the rule $B_k$ with a proof of
	$\derives_{A_0 + \dotsb + A_n} B_k$. This produces the required derivation.
\end{proof}

We examine relative strengths of a selection of schemata by
considering their relations under `$\supset$'. The renaming of bound variables
in a scheme should not affect its strength. For simplicity of notation, it is
therefore assumed that when working a predicate $Px$, any variables other than
$x$ which appear in quantifiers are bound in $Px$. We write $Py$ as shorthand
for $Px[x/y]$.

%%%%%%%%%%%%%%%%%%%%%%%%%%%%%%%%%%%%%%%%%%%%%%%%%%%%%%%%%%%%%%%%%%%%%%%%%%%%%%%%

\section{Principles}
In addition to \DNE/, \LEM/, and \EFQ/, which are included below for convenience, we examine the following principles as axiom schemata over minimal logic:
\begin{itemize}
\setlength\itemsep{.4em}
	\item[] $\DNEi{A} := \lnot\lnot A \mimp A$\hfill
		(Double Negation Elimination\footnote{Also known as ``Stability''.})
\item[] $\EFQi{A} := \bot \mimp A$\hfill
	(Ex Falso Quodlibet\footnote{Also known as ``explosion''.})
\item[] $\LEMi{A} := A \mor \lnot A$\hfill
	(Law of Excluded Middle\footnote{Also known as the ``principle of
	excluded middle'' and as ``tertium non datur''.})
\item[] $\WLEMi{A} := \lnot A \mor \lnot\lnot A$\\ \strut\hfill
	(Weak Law of Excluded Middle)
\item[] $\DGPi{A, B} := (A \mimp B) \mor (B \mimp A)$\\\strut\hfill
	(Dirk Gently's Principle\footnote{The name \DGP/ was introduced in
	\cite{materialimplications}, and is a literary reference to the novel
	\cite{dA87}, whose main character believes in ``the fundamental
	interconnectedness of all things''. \DGP/ is otherwise also known as (weak)
linearity, and is the basis for G\"odel-Dummett logic \cite{vonplato-gd}.})
\item[] $\DPi{Px} := \existsy (Py \mimp \forallx Px)$\hfill
	(Drinker Paradox)
\item[] $\HEi{Px} := \existsy (\exists x Px \mimp Py)$\\\strut\hfill
	(Schematic Form of Hilbert's Epsilon)
\item[] $\GMPi{Px} := \lnot\forallx Px \mimp \existsx \lnot Px$\\\strut\hfill
	(General Markov's Principle)
\item[] $\GLPOi{Px} := \forallx \lnot Px \mor \existsx Px$\\\strut\hfill
	(General Limited Principle of Omniscience)
\item[] $\GLPOAi{Px} := \forallx Px \mor \existsx \lnot Px$\\\strut\hfill
	(Alternate General Principle of Omniscience)
\item[] $\DNSUi{Px} := \forallx \lnot\lnot Px \mimp \lnot\lnot\forallx
	Px$\\\strut\hfill
	(Universal Double Negation Shift)
\item[] $\DNSEi{Px} := \lnot\lnot \existsx Px \mimp \existsx \lnot\lnot
	Px$\\\strut\hfill
	(Existential Double Negation Shift)
\item[] $\CDi{Px, Q} := \forallx(Px \mor \existsx Q) \mimp \forallx Px \mor
	\existsx Q$\\\strut\hfill
	(Constant Domain)
\item[] $\IPi{Px, Q} := (\existsx Q \mimp \existsx Px) \mimp \existsx(\existsx
	Q \mimp Px)$\\\strut\hfill
	(Independence of Premise)
\end{itemize}

These principles are \emph{all} classically derivable. That is, \DNE/ implies
all of these principles in the sense of $\supset$.

Principles \CD/ and \IP/ are also stated as
\begin{align*}
\CDi{Px, Q} &\equiv \forallx(Px \mor Q) \mimp \forallx Px \mor Q \\
\IPi{Px, Q} &\equiv (Q \mimp \existsx Px) \mimp \existsx(Q \mimp Px)
\end{align*}
where $x$ is not free in $Q$. These forms are syntactically equivalent to the
definitions above for such $Q$, but the variable freedom condition is not
convenient to work with when classifying schemata.

%%%%%%%%%%%%%%%%%%%%%%%%%%%%%%%%%%%%%%%%%%%%%%%%%%%%%%%%%%%%%%%%%%%%%%%%%%%%%%%%

\section{The Drinker Paradox and Hilbert's Epsilon}

The \emph{drinker paradox}, which was popularised by Smullyan in his book of puzzles \cite{rS90}, is the scheme
\[
	\DPs(Px) := \exists_y (Py \mimp \forall_x Px)
\ . \]
Liberally interpreted, it states that (in every nonempty tavern) there exists a person such that if that person is drinking, then everyone (in the tavern) is drinking.

Classically this is true because there is always a \emph{last} person to be
drinking, and it is true for that person. Due to various non-classical
interpretations of ``there is'', however, countermodels may be formed (see
Figure \ref{kripke:two-world-growing-terms}). Notably, the constructivist may object that it is not always clear who is the last to drink---except in the case of a tavern in which the number of patrons is an enumerable positive integer amount.

The drinker paradox can alternatively be stated as
\[
	\exists_y \forall_x (Py \mimp Px)
\ .\footnote{For a proof that this is actually an equivalent formulation see the appendix.} \]

The dual of the drinker paradox is the scheme
\[
	\HEi{Px} := \exists_y (\exists_x Px \mimp Py)
\ , \]
or alternatively,
\[
	\exists_y \forall_x (Px \mimp Py)
\ . \]
\HE/ resembles an axiom scheme form of Hilbert's Epsilon operator \cite{jA16}. In
particular, within a natural deduction proof, from $\exists_x Px$ it allows a
temporary name for a term satisfying $P$ to be introduced. It is equivalent to
\emph{Independence of Premise}
\[
	\IPi{Px, Q} := (\existsx Q \mimp \existsx Px) \mimp \existsx(\existsx Q \mimp Px)
\ . \]
This does not have the same power as Hilbert's Epsilon operator, however.\footnote{Milly Maietti has communicated to us the---currently unpublished---result that Hilbert's Epsilon operator implies the drinker paradox. Thus, together with our results in this paper this shows that the operator version of \HE/ is stronger than the scheme version.}

We will now characterise (full Kripke) models in which \DP/ and/or \HE/ hold,
and use these to separate the two schemata. We will ignore models containing
disconnected states (i.e. models where there are pairs of states such that every
state related to one is unrelated to the other), as these can be examined by the
characteristics of the individual components.

First consider a model with states $A \preceq B$ where there is a term $t \in
T(B) \setminus T(A)$ (for example Figure \ref{kripke:two-world-growing-terms}).
Create a predicate $Px$ with $A \forces Ps$ for all $s \in T(A)$ (and take the
upwards closure). Now $B \not\forces Pt$, so $A \not\forces Ps \mimp \forall_x
Px$, so \DP/ fails. Furthermore, create a predicate $Qx$ with $B \forces Qt$ (and
take the upwards closure). Then $B \forces \exists_x Qx$, but $B \not\forces Qs$
for any $s \in T(A)$. Thus \HE/ fails at $A$. Hence any model for either \DP/ or
\HE/ must have the same terms known at every related pair of states. We will
from now on consider only these models. Moreover, note that a system with only
one term at each state trivially models \DP/ and \HE/.

\begin{figure}[tbp] \centering
\begin{tikzpicture}[kripke]
	\node[world] (a) {$A$};
	\node[world] (b) [above=of a] {$B$};
	\worldlabel{a}{$\{s\}$}{$Ps$}
	\worldlabel{b}{$\{s, t\}$}{$Ps, Qt$}
	\path[->] (a) edge (b);
\end{tikzpicture}
\caption{Kripke countermodel for $\DPi{Px}$ and $\HEi{Qx}$}
\label{kripke:two-world-growing-terms}
\end{figure}

Now consider a model with a branch in it, i.e.\ there are states $A, B, C$
such that $A \preceq B$, $A \preceq C$, and $B$ is not related to $C$. Assume
there are at least two distinct terms understood at $A$. Let $t$ be one such
term. Then create a predicate $P$ with $B \forces Pt$, and $C \forces Ps$ for
all terms $s \in T(A) : s \neq t$ (and any other states forcing these atomic
formulae as required to maintain upwards closure). Certainly neither $B$ nor $C$
force $\forall_x Px$, but for every $u \in T(A)$ either $B$ or $C$ forces $Pu$,
so \DP/ fails at $A$. Furthermore if $u \in T(A)$ then either $B$ or $C$ will
fail to force $Pu$, but both states force $\exists_x Px$, so \HE/ also fails at
$A$. Hence any model for \DP/ or \HE/ with more than two terms must have no
branches, i.e.\ be totally ordered.

\begin{figure}[H] \centering
\begin{tikzpicture}[kripke]
	\node[world] (a) {$A$};
	\node[world] (b) [above left=0.7cm and 1.4cm of a] {$B$};
	\node[world] (c) [above right=0.7cm and 1.4cm of a] {$C$};
	\worldlabel{a}{$\{s, t\}$}{}
	\worldlabel{b}{$\{s, t\}$}{$Pt$}
	\worldlabel{c}{$\{s, t\}$}{$Ps$}
	\path[->] (a) edge (b);
	\path[->] (a) edge (c);
\end{tikzpicture}
\caption{Kripke countermodel for both $\DPi{Px}$ and $\HEi{Px}$}
\label{kripke:v-const-term}
\end{figure}

Consider then a linear model with finitely many terms. Given a predicate
$Qx$, if every state forces $Qt$ for every term or if every state does not force
$Qt$ for any term, then both \DP/ and \HE/ trivially hold (by applying the
classical reasoning), so we may suppose that this is not the case. For each term $t$, assign a
set $U_t = \{A \in \Sigma | A \not\forces Qt\}$. By upwards closure (and the
assumed linearity), if $t$ and $s$ are terms then either $U_t \subseteq U_s$ or
$U_s \subseteq U_t$, meaning these sets are totally ordered with respect to the
subset relation. There are finitely many of them, so there must be a maximal set
$U_{t_\text{max}}$ with associated term $t_\text{max}$. Suppose a state $A$
forces $Qt_\text{max}$. Then $A \not\in U_{t_\text{max}}$, and so $A \not\in
U_s$ for every term $s$. Thus $A$ forces $Qs$. Hence $Q{t_\text{max}} \mimp
\forall_x Qx$ holds in the model, and so this is a model for \DP/. A similar
argument shows \HE/ also holds, using sets $V_t = \{A \in \Sigma | A \forces
Qt\}$, and in particular the maximal set $V_{t_0}$, to show that $\exists_x Qx
\mimp Q_{t_0}$ is forced everywhere.

We now know that to separate \DP/ and \HE/ we require linear models with
infinitely many terms.

%\begin{center}
%\begin{tikzpicture}[node distance=1 cm, auto, align=center]
%	\node (A) {$A$};
%	\node (Aterms) [left of=A,anchor=east]
%		{$s, t$};
%	\node (Aatom) [right of=A,anchor=west]
%		{};
%	\node (B) [above left of=A] {$C$};
%	\node (Bterms) [left of=B,anchor=east]
%		{$s, t$};
%	\node (Batom) [right of=B,anchor=west]
%		{$Ps$};
%	\node (C) [above right of=A] {$C$};
%	\node (Cterms) [left of=C,anchor=east]
%		{$s, t$};
%	\node (Catom) [right of=C,anchor=west]
%		{$Pt$};
%
%	\draw[->] (A) to node {} (B);
%	\draw[->] (A) to node {} (C);
%\end{tikzpicture}
%\end{center}

\begin{proposition} \label{prop:he--dp}
\HE/ does not imply \DP/ in intuitionistic logic.
\end{proposition}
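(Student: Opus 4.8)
The plan is to construct a Kripke countermodel separating \HE/ from \DP/. By the preceding analysis, we know that such a model must be linearly ordered, must have the same terms at every pair of related states, and must contain infinitely many terms; so I would aim for a linear chain of states $A_0 \preceq A_1 \preceq A_2 \preceq \cdots$ (possibly with a limit world on top) whose common domain is $\naturals = \{0, 1, 2, \dotsc\}$. Since the model need only be intuitionistic rather than minimal, I would work with an intuitionistic Kripke model (never forcing $\bot$), so that \EFQ/ holds automatically and the separation is as strong as possible.

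The key idea is to exploit the asymmetry between \DP/ and \HE/ identified in the earlier paragraphs: \DP/ requires a single term $t_{\text{max}}$ witnessing $Pt \mimp \forall_x Px$, which in a linear model corresponds to a \emph{maximal} set $U_t = \{A : A \not\forces Pt\}$, whereas \HE/ requires a \emph{maximal} set $V_t = \{A : A \forces Pt\}$. The strategy is to design a single predicate $Px$ on $\naturals$ for which the existence-facts needed for \HE/ are satisfied at every world, while no single term can serve as the drinker witness. Concretely, I would arrange the forcing so that as one moves up the chain $A_0, A_1, A_2, \dotsc$, progressively more of the atoms $P0, P1, P2, \dotsc$ become forced, but in such a way that at no world is there a term $t$ with the property that forcing $Pt$ already forces all of $\forall_x Px$. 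For instance, letting $A_n \forces Pk$ exactly when $k < n$ gives a model in which $\exists_x Px$ behaves well for \HE/, yet the sets $U_t$ are strictly increasing with no maximum, so \DP/ fails.

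After fixing the concrete model, the verification splits into two tasks. First I would check that \HE/ holds for every predicate definable in the (full) model; by the characterisation already established, it suffices to find for each predicate $Qx$ a term $t_0$ whose set $V_{t_0}$ is maximal and to show $\exists_x Qx \mimp Q t_0$ is forced everywhere --- this should follow from the monotonicity and the structure of the chain. Second, and this is where the main work lies, I would exhibit a single instance $Px$ for which \DP/ fails: I must show that at some world (say the root $A_0$), for every term $t$ there is a later world forcing $Pt$ but not forcing $\forall_x Px$, so that $A_0 \not\forces \existsy(Py \mimp \forall_x Px)$.

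The hard part will be discharging the universally quantified requirement that \HE/ holds for \emph{every} predicate in the full model simultaneously, rather than just for the one predicate used to refute \DP/. The failure of \DP/ needs only a single well-chosen instance, but to show \HE/ is valid I must argue about arbitrary monotone predicates on an infinite domain, and the naive argument via a maximal $V_t$ breaks down precisely because, with infinitely many terms, a maximal set need not exist. I expect the resolution to hinge on choosing the chain so that the relevant suprema are always attained --- for example by including a top world, or by ensuring that every set $V_t$ is eventually constant --- so that the finite-domain argument sketched above can be adapted to the infinite linear case. Care with this limiting behaviour is the crux of the proof.
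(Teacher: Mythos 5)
Your model is exactly the paper's: an ascending chain $A_0 \preceq A_1 \preceq A_2 \preceq \dotsb$ with constant domain $\naturals$ and $Pk$ first forced at level $k$ (your index shift $k < n$ versus the paper's $k \leq n$ is harmless), and your argument that \DP/ fails at $A_0$ is the paper's argument. The gap is the part you yourself flag as unresolved: verifying \HE/ for \emph{every} monotone predicate $Q$ on the full model. Your diagnosis of that difficulty is, moreover, mistaken. The maximal-$V_t$ argument does \emph{not} break down in this model, and no repair (top world, ``eventually constant'' $V_t$) is needed. The point is that the chain is order-isomorphic to $\omega$ and hence well-ordered upward: every nonempty set of states has a \emph{minimum}, which is precisely the condition (``every set of states contains its infimum'') that the paper's later characterisation identifies as sufficient for \HE/. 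Concretely, each nonempty $V_t = \{A : A \forces Qt\}$ is an up-set of the chain, so $V_t = \{A_n : n \geq i_t\}$ for a least level $i_t \in \naturals$; the set of these levels is a nonempty subset of $\naturals$ and so has a least element, which yields a maximal $V_t$. The paper phrases this as a one-line argument: among all pairs $(i,t)$ with $A_i \forces Qt$, pick one with $i$ minimal; then any state forcing $\exists_x Qx$ lies at or above $A_i$, hence forces $Qt$ by upwards closure, so $\exists_x Qx \mimp Qt$ (and therefore \HE/) is forced at every state. The degenerate case where no $Qt$ is ever forced is handled by noting $\exists_x Qx \mimp Q0$ holds vacuously.

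What fails in infinite linear models is the dual condition needed for \DP/ (existence of maximal elements in sets of states), which is exactly why this model is a countermodel for \DP/; your worry transplanted that failure onto the \HE/ side, where the asymmetry of the $\omega$-chain saves you. Incidentally, adding a top world would not have hurt --- \DP/ would still fail at $A_0$ since $A_{t+1}$ forces $Pt$ without forcing $\forall_x Px$ --- but it is unnecessary, and leaving the resolution at ``I expect the resolution to hinge on\dots'' means the crux of the proposition is not actually proved.
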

\begin{proof}
Consider the (intuitionistic) Kripke model with infinitely many worlds below.
In general, $A_n \preceq A_{n + 1}$ and $A_n \forces P0 \dotsc Pn$, and the
domain at every world is $\naturals$.
\hfill
\begin{center}
\begin{tikzpicture}[kripke]
	\node[world] (a0) {$A_0$};
	\node[world] (a1) [above=of a0] {\small $A_1$};
	\node[world] (a2) [above=of a1] {\tiny $A_2$};
	\node[world] (a3) [above=of a2] {};
	\node[vanishpoint] (avanish) [above=of a3] {};
	\worldlabel{a0}{$\naturals$}{$P0$}
	\worldlabel{a1}{$\naturals$}{$P0, P1$}
	\worldlabel{a2}{$\naturals$}{$P0, P1, P2$}
	\worldlabel{a3}{$\naturals$}{$P0, P1, P2, P3$}
	\path[->] (a0) edge (a1);
	\path[->] (a1) edge (a2);
	\path[->] (a2) edge (a3);
	\path[->] (a3) edge[vanisharrow] (avanish);
\end{tikzpicture}
\label{kripke:dp-cm}
\end{center}
\hfill\\
No state forces $\forall_x Px$, but for any term $t \in T(A_0)$ we have $A_0
\preceq A_t$ and $A_t \forces Pt$. Therefore $A_0 \not\forces \exists_y (Py
\mimp \forall_x Px)$, i.e.\ \DP/ does not hold in this model. (In fact, this
argument works for any state.)

Now consider any predicate $Qx$ in this model. If there is no state forcing $Qt$
for some $t \in \naturals$, then trivially every state forces $\exists_x Qx \mimp
Q0$, and it follows that \HE/ is forced. On the other hand, if there are $i, t
\in \naturals$ such that $A_i \forces Qt$, then choose a pair $i, t$ with
minimal $i$. Then, by upwards closure, $\exists_x Qx \mimp Qt$ is forced by
every state.  Hence every state forces \HE/.
\end{proof}

The above model is also a countermodel for \DNSU/. As $\lnot Pt$ is not forced
at any world for any $t$, $A_0 \forces \forallx \lnot\lnot Px$. However $A_0
\forces \lnot\forallx Px$, so $A \not\forces \DNSUi{P}$.

\begin{proposition} \label{prop:dp--he}
\DP/ does not imply \HE/ in intuitionistic logic.
\end{proposition}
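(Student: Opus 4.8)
The plan is to exhibit a single (full) intuitionistic Kripke model that forces every instance of \DP/ while refuting an instance of \HE/; since \HE/ would have to be forced as well if it were derivable from \DP/ over intuitionistic logic, soundness then gives the nonimplication. The earlier analysis already tells me what shape the model must have: it must be linear with infinitely many terms. I will therefore take the frame \emph{dual} to the one used in Proposition~\ref{prop:he--dp}, namely a descending chain of worlds $C_1 \succeq C_2 \succeq C_3 \succeq \cdots$ together with a single limit world $C_\infty$ lying below all of them, so that $C_\infty \preceq \cdots \preceq C_2 \preceq C_1$, with domain $\naturals$ at every world. The essential feature is that infinitely many worlds lie \emph{above} $C_\infty$; this is exactly what will let \HE/ fail there.

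To refute \HE/ I define a predicate $Q$ by declaring $C_n \forces Qk \iff n \le k$ (so $Qk$ is forced at precisely $C_1,\dots,C_k$), and letting $C_\infty$ force no atom. Monotonicity is immediate, since a lower world forces fewer atoms. Then $\exists_x Qx$ is forced at every $C_n$ (as $C_n \forces Qn$) but not at $C_\infty$. For any candidate witness $t$ the world $C_{t+1}$ forces $\exists_x Qx$ yet does not force $Qt$, so $C_\infty \not\forces \exists_x Qx \mimp Qt$ for \emph{every} $t$, and hence $C_\infty \not\forces \existsy(\exists_x Qx \mimp Qy)$. This step is short; the only point to check is that each witnessing world $C_{t+1}$ genuinely lies above $C_\infty$, which holds because $C_\infty$ is the bottom.

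The main work, and the step I expect to be the obstacle, is verifying that \emph{every} instance of \DP/ is forced at \emph{every} world despite the infinite domain (the discussion preceding the proposition already flags that this is precisely where \DP/ and \HE/ can separate). For an arbitrary predicate $R$ I will examine the truth sets $V_t = \{W \in \Sigma : W \forces Rt\}$, each upward closed. The key structural observation is that the nonempty upward-closed subsets of this frame form the chain $\{C_1\} \subset \{C_1,C_2\} \subset \cdots \subset \{C_n : n \ge 1\} \subset \Sigma$, which has order type $\omega+2$ and is therefore \emph{well-ordered} by inclusion. Consequently either some $V_t$ is empty---in which case $Rt \mimp \forallx Rx$ holds vacuously and $t$ witnesses \DP/ at every world---or all $V_t$ are nonempty and the family $\{V_t\}_t$ has a least element $V_{t_0} = \bigcap_s V_s$. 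Since $\forallx Rx$ is forced exactly on $\bigcap_s V_s$, in the second case $Rt_0 \mimp \forallx Rx$ holds everywhere, so $t_0$ witnesses \DP/ at every world. Either way \DP/ holds throughout the model.

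Finally I will observe that the model forces no $\bot$, hence validates \EFQ/, which upgrades the separation from minimal to intuitionistic logic. The delicate interplay is that the infinitely many worlds above $C_\infty$ are what break \HE/, while the well-ordering of the upward-closed sets is what nonetheless rescues \DP/. As a sanity check I would confirm that deleting $C_\infty$ (leaving a bare descending $\omega$-chain) makes \HE/ true again---each remaining world then sees only finitely many worlds above it---which verifies that the limit world is doing the essential work.
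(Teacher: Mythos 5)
Your proposal is correct and is essentially the paper's own proof: your frame is the paper's model $A_0 \succeq A_{-1} \succeq A_{-2} \succeq \cdots \succeq A_{-\infty}$ relabelled (via $C_n = A_{-(n-1)}$, $C_\infty = A_{-\infty}$), and your predicate $Q$ is the paper's $P$ up to an index shift, refuting \HE/ at the bottom world by the identical witness-by-witness argument. Your verification that \DP/ holds — via the well-ordering (order type $\omega+2$) of the upward-closed sets, with the witness $t_0$ having minimal truth set $V_{t_0} = \bigcap_s V_s$ — is just a more lattice-theoretic rendering of the paper's ``choose a pair $i,t$ with minimal $i$'' step, though it does handle uniformly the edge case where atoms fail only at the bottom world, which the paper's literal case split ($i \in \naturals$) passes over.
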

\begin{proof}
Consider the (intuitionistic) Kripke system with states $A_{0} \succeq A_{-1}
\succeq A_{-2} \succeq \dotsc \succeq A_{-\infty}$. Let $T(B) = \naturals$ for
every state $B$. Set $F(A_{-\infty}) = \emptyset$, and $F(A_{-n}) = \{Pn,
P(n+1), P(n+2), \dotsc\}$.
\begin{center}
\begin{tikzpicture}[kripke]
	\node[world] (a0) {$A_0$};
	\node[world] (a1) [below=of a0] {\small $A_{-1}$};
	\node[world] (a2) [below=of a1] {\tiny $A_{-2}$};
	\node[world,inner sep=0.2cm] (a3) [below=of a2] {};
	\node[vanishpoint] (avanish) [below=of a3] {};
	\node[world] (ainf) [below=of avanish] {$A_{-\infty}$};
	\worldlabel{a0}{$\naturals$}{$P0, P1, P2, P3, \dotsc$}
	\worldlabel{a1}{$\naturals$}{$P1, P2, P3, \dotsc$}
	\worldlabel{a2}{$\naturals$}{$P2, P3, \dotsc$}
	\worldlabel{a3}{$\naturals$}{$P3, P4, \dotsc$}
	\worldlabel{ainf}{$\naturals$}{}
	\path[<-] (a0) edge (a1);
	\path[<-] (a1) edge (a2);
	\path[<-] (a2) edge (a3);
	\path[<-] (a3) edge[vanisharrow] (avanish);
	\path[<-] (avanish) edge[vanisharrow] (ainf);
\end{tikzpicture}
\label{kripke:he-cm}
\end{center}
\hfill\\
Let $t \in T(A_{-\infty})$. Then $A_{-(t+1)} \not\forces Pt$. However,
$A_{-(t+1)} \forces P(t+1)$, so $A_{-(t+1)} \forces \exists_x Px$. Therefore
$A_{-(t+1)} \not\forces \exists_x Px \mimp Pt$. Thus $A_{-\infty} \not\forces
\exists_y(\exists_x Px \mimp Pt)$, so \HE/ does not hold in this model.

Now consider any predicate $Qx$ in this model. If every state forces $\forall_x
Qx$, then trivially they also force $\exists_y (Qy \mimp \forall_x Qx)$. On the
other hand, if there are $i, t \in \naturals$ such that $A_{-i} \not\forces Qt$
then choose a pair $i, t$ with minimal $i$ (i.e.\ maximal $A_{-i}$). Then by
upwards closure, whenever $Qt$ is forced, $\forall_x Qx$ is also forced. Hence
every state forces $Qt \mimp \forall_x Qx$, and so also forces $\DP/$.
\end{proof}

In general, if a model contains an infinite sequence of states $A_0 \preceq A_1
\preceq \dotsb$, then a predicate $P$ can be constructed as in Proposition \ref{prop:he--dp}
in order to contradict \DP/. On the other hand if no such sequence exists then
every sequence of related states has a maximal element. Following reasoning in
Proposition \ref{prop:dp--he} shows that \DP/ will hold in such a model.

Conversely, if a model contains an infinite sequence of states $B_0 \succeq
B_{-1} \succeq \dotsb$, along with an element $B_{-\infty}$ which precedes every
state in the sequence, then $P$ may be constructed as in Proposition \ref{prop:dp--he},
contradicting \HE/.

If, on the other hand, no such states exist, then every set of related states
either contains a minimal element or has no lower bound, i.e. every set of
states contains its infimum. Let $A$ be a state in such a model. Now consider
the set $S$ of states above $A$ which force $\exists_x Px$.
If $S = \emptyset$, then vacuously $A \forces \exists_t Px \mimp Pt$ for every
term $t$, so $A$ forces \HE/. Otherwise, note that $A$ is certainly a lower
bound for $S$. By the above assumption, $S$ must have a minimum element $B$. Now
$B \forces \exists_x Px$ so $B \forces Pt$ for some $t$. By upwards closure, $C
\forces Pt$ for every $C \succeq B$, and so specifically for all $C \in S$. Thus
whenever $A \preceq C$ and $C \forces \exists_x Px$, we have $C \in S$, so $C
\forces Pt$. Then $A \forces \exists_x Px \mimp Pt$, and so $A$ forces \HE/.
Hence \HE/ is forced by every state, and so holds in this model.

We now have a characterisation for models of \DP/ and \HE/. They are the models
wherein every state has exactly one term, or otherwise,
\begin{itemize}
	\item the model is linear, and
	\item all terms are known at all states (domain is constant), and
	\item (to model \DP/) every set of states has a maximal element, and/or
	\item (to model \HE/) every set of states contains its infimum.
\end{itemize}
Where $T$ is the set of terms (at every state):
\begin{center}
\begin{tabular}{ |p{2.9cm}|c|c|c| }
	\cline{2-4} \multicolumn{1}{c|}{}
	& $|T| = 1$ & $|T| \in \naturals$ & $|T| \geq |\naturals|$
\\ \hline
Branched & \DP/, \HE/ & Neither & Neither
\\ \hline
Linear & \DP/, \HE/ & \DP/, \HE/ & Indeterminate
\\ \hline
Linear, $\max{\Pi}$ exists for all $\Pi \subset \Sigma$
	& \DP/, \HE/ & \DP/, \HE/ & \DP/
\\ \hline
Linear, $\inf{\Pi} \in \Pi$ for all $\Pi \subset \Sigma$
	& \DP/, \HE/ & \DP/, \HE/ & \HE/
\\ \hline
Both of the two above
	& \DP/, \HE/ & \DP/, \HE/ & \DP/, \HE/
\\ \hline
\end{tabular}
\end{center}

If a model has graph-like connectedness, where all related pairs of states have
finitely many states between them (and so finite paths between them), then it
cannot fall under the third or fourth rows, and so cannot separate \DP/ and \HE/.

 The models are evocative of the intuitions. For, recall the ``last drinker in the tavern'' reason for accepting \DP/ as true; similarly \HE/ can be justified by pointing to ``the first person to drink''.

\begin{corollary} \label{cor:lemdp--lemhilepx}
\DP/ and \HE/ are independent of each other in minimal logic with \LEM/ (and so
certainly over decidable predicates).
\end{corollary}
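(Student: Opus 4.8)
The plan is to reuse the two separating Kripke models from Propositions~\ref{prop:he--dp} and~\ref{prop:dp--he}, and simply verify that each of them additionally forces \LEM/. Since Corollary~\ref{cor:lemdp--lemhilepx} asserts independence \emph{over minimal logic extended by} \LEM/, the only new obligation beyond the two propositions already proved is to check that the models used there are in fact models of \LEM/. Concretely, I would argue: to show $\HE/ \not\supset \DP/$ over \LEM/, exhibit a Kripke model forcing both \HE/ and \LEM/ at every state, in which some instance of \DP/ fails; and symmetrically for the other direction. The natural candidates are the two models already constructed.

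The first step is to recall that a Kripke model forces $\LEMi{\alpha}$ at a state $A$ exactly when, for every $\alpha$, either $A \forces \alpha$ or $A \forces \lnot\alpha$ (equivalently, $\alpha$ is not forced at any state above $A$). So I would verify that each model is \emph{linear} and that at every world the set of forced atomic facts is decided along the single branch. For the model of Proposition~\ref{prop:he--dp} (the upward sequence $A_0 \preceq A_1 \preceq \dotsb$), linearity means any predicate's truth value, once it becomes forced, stays forced; and since for each atomic formula there is either a least world forcing it or no world forcing it, the usual inductive argument shows \LEM/ holds. The same check applies to the downward model of Proposition~\ref{prop:dp--he}, which is also linear (indeed totally ordered with an added infimum $A_{-\infty}$). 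The key observation making \LEM/ hold is linearity: in a linear Kripke model, $\LEMi{\alpha}$ is forced for all $\alpha$ (this is precisely the semantic content of G\"odel--Dummett-style linearity, and \LEM/ is the strongest such principle, forced exactly when the frame is both linear and has a decidable forcing relation along the branch).

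The potential obstacle is that linearity alone does \emph{not} force full \LEM/; it forces \WLEM/ and \DGP/, but \LEM/ requires in addition that the forcing relation be \emph{stable} in the sense that every formula is decided at every world. A Kripke model forces \LEM/ for all formulae iff it has a single world (or, more generally, iff forcing is constant across the frame). The sequential models of the two propositions do \emph{not} force \LEM/: e.g.\ in Proposition~\ref{prop:he--dp}, $A_0 \not\forces P1$ and $A_0 \not\forces \lnot P1$ since $A_1 \forces P1$. So the honest route is not to claim the given models already satisfy \LEM/, but to recall that \LEM/ is equivalent to restricting attention to \emph{decidable} predicates, i.e.\ to models in which every atomic predicate is forced constantly across all related states. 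The hard part will therefore be to \emph{rebuild} the two countermodels so that every predicate appearing is decidable, while retaining the failure of \DP/ (resp.\ \HE/).

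I expect the resolution to be that we do not need true full \LEM/ at a single world, but rather models in which each predicate instance is \emph{decided} at the base world so that \LEM/ is forced there; the cleanest argument is to take the two separating models and observe that in each, \emph{for the particular predicate used}, the forcing is decidable if we collapse to the relevant sub-branch, or else to invoke the final remark of the excerpt—that graph-like models with finite paths cannot separate \DP/ and \HE/—to conclude that the separating models are necessarily infinite, and then check directly that in those infinite linear models \LEM/ holds for each fixed formula by the least-world argument above. Thus the proof reduces to: (i) note both separating models are linear with infinitely many terms; (ii) verify by the minimal-witnessing-world computation (already carried out inside each proposition's proof) that each formula is either forced at $A_0$/$A_{-\infty}$ or its negation is; hence \LEM/ is forced at the base state, while \DP/ (resp.\ \HE/) fails there. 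The main technical care is in step (ii), ensuring the decidability computation covers compound formulae and not merely atoms.
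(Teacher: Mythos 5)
There is a genuine gap, and you in fact identified it yourself without resolving it. Your step (ii)---that in the two linear models ``each formula is either forced at $A_0$/$A_{-\infty}$ or its negation is''---is exactly the claim you refuted in your third paragraph: $A_0 \not\forces P1$ and $A_0 \not\forces \lnot P1$ (since $A_1 \forces P1$), so \LEM/ is \emph{not} forced at the base world of either model when they are read as intuitionistic Kripke models, and no ``least-witnessing-world'' computation can change that. Worse, the repair you sketch (rebuilding the models so that every predicate is decidable while keeping the failure of \DP/ or \HE/) is impossible in principle within intuitionistic Kripke semantics: an intuitionistic model forcing every instance of \LEM/ at every world forces everything derivable in intuitionistic logic plus \LEM/, i.e.\ in classical logic, and both \DP/ and \HE/ are classical tautologies. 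So no intuitionistic countermodel of the kind you are looking for exists; this is precisely why the corollary is stated over \emph{minimal} logic.

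The idea you are missing is the one the paper uses: keep the two models from Propositions \ref{prop:he--dp} and \ref{prop:dp--he} exactly as they are, but regard them as \emph{minimal} Kripke models, where $\bot$ is treated as an ordinary propositional symbol, and additionally force $\bot$ at every state. Then for any formula $\alpha$ the implication $\lnot\alpha = \alpha \mimp \bot$ is vacuously forced everywhere, so every instance of \LEM/ holds at every world; meanwhile the forcing of the atomic formulae $Pn$ is untouched, so the original arguments that \DP/ fails (respectively \HE/ fails) while \HE/ holds (respectively \DP/ holds) go through verbatim. The price paid is that \EFQ/ fails in these models---they are minimal but not intuitionistic---which is exactly what makes the corollary consistent with the fact that classically both schemata are provable.
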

\begin{proof}
Recall the Kripke systems in Propositions \ref{prop:he--dp} and \ref{prop:dp--he}.
Considering them now as minimal Kripke systems, and forcing $\bot$ at every state
forces \LEM/ everywhere, but their respective separations still hold.
\end{proof}

\section{Separations without full models} \label{Sec:nonfullmodels}

The \emph{Constant Domain} principle is
\[
	\CDi{Px, Q} := \forallx(Px \mor \existsx Q) \mimp \forallx Px \mor \existsx Q
\ . \]
Consider a full Kripke model in which all related worlds have the same domain.
For a world $A$, if $A \forces \forallx(Px \mor \existsx Q)$ then $A
\forces Pt \mor \existsx Q$ for all $t$ in the domain. If $A \not\forces
\existsx Q$, then $A \forces Pt$, and so $A \forces \forallx Px$. Therefore this
is a model for \CD/. Hence any full Kripke countermodel for \CD/ must have
related worlds with different domains, and so must also be a countermodel to
\HE/ (from the section above).

However, we cannot conclude $\HEs \reduces \CDs$, as restriction to full Kripke
models does not preserve completeness of Kripke semantics. To see that
$\nderives_{\HEs} \CDi{Px, Q}$, we require a non-full countermodel to \CD/ in
which \HE/ holds. Therefore, a notion of an axiom scheme holding in a non-full
model is needed. For every formula $\Phi$ in the model, $\HEi{\Phi}$ should be
forced. Formulae in the model should be at least closed with respect to the
logical operations `$\mimp$', `$\mand$', `$\mor$', `$\exists$', and `$\forall$',
and `$\bot$' must also be a formula. The constants in the domain of the root
world may also appear in formulae, but no others.

Consider the following infinite model:
\begin{center}
\begin{tikzpicture}[kripke]
	\node[world] (a)  {$A$};
	\node[world] (a0) [above=of a]  {$A_0$};
	\node[world] (a1) [above=of a0] {\small $A_1$};
	\node[world] (a2) [above=of a1] {\tiny $A_2$};
	\node[world] (a3) [above=of a2] {};
	\node[vanishpoint] (avanish) [above=of a3] {};
	\worldlabel{a}{$\{0\}$}{$P0$}
	\worldlabel{a0}{$\naturals$}{$P0, Q0$}
	\worldlabel{a1}{$\naturals$}{$P0, P1, Q0, Q1$}
	\worldlabel{a2}{$\naturals$}{$P0, P1, P2, Q0, Q1, Q2$}
	\worldlabel{a3}{$\naturals$}{$P0, P1, P2, P3, Q0, Q1, Q2, Q3$}
	\path[->] (a)  edge (a0);
	\path[->] (a0) edge (a1);
	\path[->] (a1) edge (a2);
	\path[->] (a2) edge (a3);
	\path[->] (a3) edge[vanisharrow] (avanish);
\end{tikzpicture}
\end{center}
We have $A \not\forces \CDi{Px, Q}$.

\HE/ holds trivially for propositions. It remains to confirm that \HE/ holds for
all predicates which exist in this model.  Predicates are definable by combining
`$Px$' and `$Qx$', with each other and with propositions, using the binary
logical operations. Clearly, combining a predicate with itself in this manner is
trivial. The propositions available are only $P0$, $Q0$, $\bot$, since
\begin{align*}
	\forallx Px \equiv \bot \\
	\forallx Qx \equiv \bot \\
	\existsx Px \equiv P0 \\
	\existsx Qx \equiv Q0
\end{align*}
and $P0$, $Q0$, $\bot$ are closed under the binary logical operations (with
respect to equivalence in this model). First,
\begin{align*}
	Px \mimp Qx \equiv Qx \\
	Qx \mimp Px \equiv P0 \\
	Px \mor Qx  \equiv Px \\
	Px \mand Qx  \equiv Qx
\end{align*}
Now, with $P0$,
\begin{align*}
	Px \mimp P0 \equiv P0 \\
	P0 \mimp Px \equiv Px \\
	Px \mor  P0  \equiv P0 \\
	Px \mand P0  \equiv Px \\
	Qx \mimp Q0 \equiv P0 \\
	Q0 \mimp Qx \equiv Qx \\
	Qx \mor  Q0  \equiv Q0 \\
	Qx \mand Q0  \equiv Qx
. \end{align*}
With $Q0$,
\begin{align*}
	Px \mimp Q0 \equiv Q0 \\
	Q0 \mimp Px \equiv Qx \\
	Px \mor  Q0  \equiv P0 \\
	Px \mand Q0  \equiv Qx \\
	Qx \mimp Q0 \equiv P0 \\
	Q0 \mimp Qx \equiv Qx \\
	Qx \mor  Q0  \equiv Q0 \\
	Qx \mand Q0  \equiv Qx
\end{align*}
Finally, with $\bot$,
\begin{align*}
	Px \mimp \bot \equiv \bot \\
	\bot \mimp Px \equiv P0 \\
	Px \mor  \bot  \equiv Px \\
	Px \mand \bot  \equiv \bot \\
	Qx \mimp \bot \equiv \bot \\
	\bot \mimp Qx \equiv P0 \\
	Qx \mor  \bot  \equiv Qx \\
	Qx \mand \bot  \equiv \bot
.\end{align*}

Thus, $Px$ and $Qx$ really are the only predicates in this model. $A \forces
\HEi{P}, \HEi{Q}$, so we have a non-full model for \HE/ where \CD/ fails.

%%%%%%%%%%%%%%%%%%%%%%%%%%%%%%%%%%%%%%%%%%%%%%%%%%%%%%%%%%%%%%%%%%%%%%%%%%%%%%%%

\section{From first-order to propositional schemata}
Some first-order schemata are infinitary forms of propositional schemata. Viewing
universal and existential generalisation as conjunction and disjunction on
propositional symbols $A$ and $B$, the drinker paradox becomes
\[
	(A \mimp (A \mand B)) \mor (B \mimp (A \mand B))
, \]
and so \DGP/ follows. A formal proof requires embedding $A$ and $B$ in a single
predicate. For example, over the domain of natural numbers, a predicate $P$
such that
\begin{align*}
	P(0)  &\miff A \\
	P(Sn) &\miff B
\end{align*}
gives $\DPi{Px} \derives \DGPi{A, B}$. However, such an embedding is not
possible if the domain contains a single element. It was shown above that \DP/
holds in models with branches if the domain contains only one term, while in
\cite{materialimplications} it is shown that \DGP/ holds only in v-free models.
Therefore there can be no way of deriving instances of \DGP/ from \DP/ without
an embedding using two or more elements in the domain.

\begin{figure}[H] \centering
\begin{tikzpicture}[kripke]
	\node[world] (a) {$X$};
	\node[world] (b) [above left=0.7cm and 1.4cm of a] {$Y$};
	\node[world] (c) [above right=0.7cm and 1.4cm of a] {$Z$};
	\worldlabel{a}{$\{s\}$}{}
	\worldlabel{b}{$\{s\}$}{$A$}
	\worldlabel{c}{$\{s\}$}{$B$}
	\path[->] (a) edge (b);
	\path[->] (a) edge (c);
\end{tikzpicture}
\caption{Kripke countermodel for $\DGPi{A, B}$ where \DP/ holds}
\label{kripke:v-one-term}
\end{figure}

Domain is a semantic concept. In order to derive an instance of \DGP/ using
\DP/, we require syntax corresponding to the existence of more than one
(distinct) term.

\begin{definition}
Natural deduction can be extended by adding term names $0$ and $1$, a unary
predicate $D$, and the rules

\DZ/: \vspace{-\baselineskip}
\begin{prooftree}
\AxiomC{}
\RightLabel{D0}
\UnaryInfC{$D{0}$}
\end{prooftree}

\DO/: \vspace{-\baselineskip}
\begin{prooftree}
\AxiomC{}
\RightLabel{$\Tneg{D1}$}
\UnaryInfC{$\Tneg{D{1}}$}
\end{prooftree}

\DX/: \vspace{-\baselineskip}
\begin{prooftree}
\AxiomC{}
\RightLabel{Dx}
\UnaryInfC{$\Tforall_{x} \left(D{x} \Tor \Tneg{D{x}}\right)$}
\end{prooftree}

$D$ serves to make a weak distinction between the constants named by $0$ and
$1$. \footnote{Bell in \cite{bellepsilon} suggests this ``modest `decidability'
condition" in the form of a decidable equality for a single constant $a$, along
with a constant $b \neq a$.}

We call minimal (intuitionistic) logic extended by these rules \emph{two-termed}
minimal (intuitionistic) logic, in which case we write `$\derives_{\TT/}$' in
place of `$\derives$'.
\end{definition}

Semantically, an intuitionistic Kripke model for \TT/ is one in which
there are two constants $0$ and $1$, $D0$ holds at every world, and $Dn$ is not
forced for $n \neq 0$. For minimal Kripke models, it is also possible instead
that there is only one term, and $\bot$ holds everywhere.

In general, given propositional symbols $A$ and $B$, we want to define a
predicate $P$ such that $\forallx Px \derives A \mand B$ and $\existsx Px
\derives A \mor B$.

We recover
\[
	\DPi{(Dx \mimp A) \mand (\lnot Dx \mimp B)}
		\derives_{\EFQ/, \TT/} \DGPi{A, B}
\]
\[
	\HEi{(Dx \mimp A) \mand (\lnot Dx \mimp B)}
		\derives_{\EFQ/, \TT/} \DGPi{A, B}
\]
\[
	\DPi{(Dx \mimp \lnot\lnot A) \mand (\lnot Dx \mimp \lnot A)}
		\derives_{\TT/} \WLEMi{A}
\]
\[
	\HEi{(Dx \mimp \lnot\lnot A) \mand (\lnot Dx \mimp \lnot A)}
		\derives_{\TT/} \WLEMi{A}
\]
\[
	\GMPi{(Dx \mimp \lnot\lnot A) \mand (\lnot Dx \mimp \lnot A)}
		\derives_{\TT/} \WLEMi{A}
\]
\[
	\DNSEi{(Dx \mimp \lnot\lnot A) \mand (\lnot Dx \mimp \lnot A)}
		\derives_{\TT/} \WLEMi{A}
. \]

%%%%%%%%%%%%%%%%%%%%%%%%%%%%%%%%%%%%%%%%%%%%%%%%%%%%%%%%%%%%%%%%%%%%%%%%%%%%%%%%
\section{Hierarchy}\label{Sec:hierarchy}

The preorder from `$\reduces$` produces a hierarchy. Arrows labelled with
schemes indicate that those schemes must be taken together with the scheme at
the tail to produce the scheme at the head.

\begin{center}
\begin{tikzpicture}[auto]
\node (glpoa) at (0, 0) {\GLPOA/};
\node (dp) at   (2.5, 0) {\DP/};
\node (he) at   (7,  0) {\HE/};
\node (ud) at   (5, -2) {\CD/};
\node (gmp) at  (0, -4) {\GMP/};
\node (lem) at  (2.5, -4) {\LEM/, \GLPO/};
\node (dgp) at  (5, -4) {\DGP/};
\node (dnsu) at (0, -6) {\DNSU/, \WGMP/};
\node (dnse) at (7, -6) {\DNSE/};
\node (wlem) at (5, -8) {\WLEM/};
\draw[->] (dgp) to[] (wlem);
\draw[->] (dnse) to[] node[midway, sloped] {\TT/} (wlem);
\draw[->] (dp) to[] node[midway, sloped] {\EFQ/, \TT/} (dgp);
\draw[->] (dp) to[] (gmp);
\draw[->] (dp) to[] (ud);
\draw[->] (glpoa) to[] (gmp);
\draw[->] (glpoa) to[] (lem);
\draw[->] (gmp) to[] (dnse);
\draw[->] (gmp) to[] (dnsu);
\draw[->] (he) to[] (dnse);
\draw[->] (he) to[] node[midway, sloped] {\EFQ/, \TT/} (dgp);
\draw[->] (lem) to[] (dnse);
\draw[->] (lem) to[] (wlem);
\end{tikzpicture}
\end{center}

This hierarchy is complete in the sense that no other unlabelled arrows may be
added (see below). Moreover, for arrows labelled with at least one of \EFQ/,
\TT/, the remaining open questions are if $\GMPs, \EFQs \reduces \CDs$ and/or
$\GMPs, \EFQs, \TTs \reduces \CDs$.

\section{Semantics}
In addition to the Kripke model analysis presented earlier, the following full
models give all possible separations of the schemes under investigation. In
cases where models should have \TT/, we omit labelling $D0$ on every world for
the sake of brevity.

In \cite{materialimplications}, it is shown that \DGP/ and \WLEM/ hold in all
v-free models, \EFQ/ holds in a model if and only if $\bot$ is not forced
anywhere, and \LEM/ holds if only one world does not force $\bot$. Revisiting
the countermodels (and previously given reasoning) for \DP/ and \HE/, we have

% dp-cm
\begin{center}
\begin{tikzpicture}[kripke]
	\node[world] (a0) {$A_0$};
	\node[world] (a1) [above=of a0] {\small $A_1$};
	\node[world] (a2) [above=of a1] {\tiny $A_2$};
	\node[world] (a3) [above=of a2] {};
	\node[vanishpoint] (avanish) [above=of a3] {};
	\worldlabel{a0}{$\naturals$}{}
	\worldlabel{a1}{$\naturals$}{}
	\worldlabel{a2}{$\naturals$}{}
	\worldlabel{a3}{$\naturals$}{}
	\path[->] (a0) edge (a1);
	\path[->] (a1) edge (a2);
	\path[->] (a2) edge (a3);
	\path[->] (a3) edge[vanisharrow] (avanish);
\end{tikzpicture}
\end{center}
is a model for \EFQ/, \TT/, \HE/, \DGP/, \WLEM/, \CD/, and a countermodel for
\DP/, \LEM/, \DNSU/, while

% he-cm
\begin{center}
\begin{tikzpicture}[kripke]
	\node[world] (a0) {$A_0$};
	\node[world] (a1) [below=of a0] {\small $A_{-1}$};
	\node[world] (a2) [below=of a1] {\tiny $A_{-2}$};
	\node[world,inner sep=0.2cm] (a3) [below=of a2] {};
	\node[vanishpoint] (avanish) [below=of a3] {};
	\node[world] (ainf) [below=of avanish] {$A_{-\infty}$};
	\worldlabel{a0}{$\naturals$}{}
	\worldlabel{a1}{$\naturals$}{}
	\worldlabel{a2}{$\naturals$}{}
	\worldlabel{a3}{$\naturals$}{}
	\worldlabel{ainf}{$\naturals$}{}
	\path[<-] (a0) edge (a1);
	\path[<-] (a1) edge (a2);
	\path[<-] (a2) edge (a3);
	\path[<-] (a3) edge[vanisharrow] (avanish);
	\path[<-] (avanish) edge[vanisharrow] (ainf);
\end{tikzpicture}
\end{center}
is a model for \EFQ/, \TT/, \DP/, \DGP/, \WLEM/ and a countermodel for \HE/,
\LEM/.

It is trivial to check that model presented in Section \ref{Sec:nonfullmodels}
can be modified as follows, to model both \HE/ and \TT/ while still being a
countermodel to \CD/.
\begin{center}
\begin{tikzpicture}[kripke]
	\node[world] (a)  {$A$};
	\node[world] (a0) [above=of a]  {$A_0$};
	\node[world] (a1) [above=of a0] {\small $A_1$};
	\node[world] (a2) [above=of a1] {\tiny $A_2$};
	\node[world] (a3) [above=of a2] {};
	\node[vanishpoint] (avanish) [above=of a3] {};
	\worldlabel{a}{$\{0, 1\}$}{$P0, P1$}
	\worldlabel{a0}{$\naturals$}{$P0, P1, Q0, Q1$}
	\worldlabel{a1}{$\naturals$}{$P0, P1, P2, Q0, Q1, Q2$}
	\worldlabel{a2}{$\naturals$}{$P0, P1, P2, P3, Q0, Q1, Q2, Q3$}
	\worldlabel{a3}{$\naturals$}{$P0, P1, P2, P3, P4, Q0, Q1, Q2, Q3, Q4$}
	\path[->] (a)  edge (a0);
	\path[->] (a0) edge (a1);
	\path[->] (a1) edge (a2);
	\path[->] (a2) edge (a3);
	\path[->] (a3) edge[vanisharrow] (avanish);
\end{tikzpicture}
\end{center}

It is straightforward to check whether a scheme holds or fails in a given finite
full model; as only (few and) finitely many upwards closed labellings of worlds
are possible, and these may be checked exhaustively. We therefore present the
remaining models without comment.
%% dp-cm
%\begin{tikzpicture}[kripke]
%	\node[world] (a0) {$A_0$};
%	\node[world] (a1) [above=of a0] {\small $A_1$};
%	\node[world] (a2) [above=of a1] {\tiny $A_2$};
%	\node[world] (a3) [above=of a2] {};
%	\node[vanishpoint] (avanish) [above=of a3] {};
%	\worldlabel{a0}{$\naturals$}{}
%	\worldlabel{a1}{$\naturals$}{}
%	\worldlabel{a2}{$\naturals$}{}
%	\worldlabel{a3}{$\naturals$}{}
%	\path[->] (a0) edge (a1);
%	\path[->] (a1) edge (a2);
%	\path[->] (a2) edge (a3);
%	\path[->] (a3) edge[vanisharrow] (avanish);
%\end{tikzpicture}
%\EFQ/, \TT/, \HE/, \DGP/, \WLEM/, \CD/
%\DP/, \LEM/, \DNSU/
%
%% he-cm
%\begin{tikzpicture}[kripke]
%	\node[world] (a0) {$A_0$};
%	\node[world] (a1) [below=of a0] {\small $A_{-1}$};
%	\node[world] (a2) [below=of a1] {\tiny $A_{-2}$};
%	\node[world,inner sep=0.2cm] (a3) [below=of a2] {};
%	\node[vanishpoint] (avanish) [below=of a3] {};
%	\node[world] (ainf) [below=of avanish] {$A_{-\infty}$};
%	\worldlabel{a0}{$\naturals$}{}
%	\worldlabel{a1}{$\naturals$}{}
%	\worldlabel{a2}{$\naturals$}{}
%	\worldlabel{a3}{$\naturals$}{}
%	\worldlabel{ainf}{$\naturals$}{}
%	\path[<-] (a0) edge (a1);
%	\path[<-] (a1) edge (a2);
%	\path[<-] (a2) edge (a3);
%	\path[<-] (a3) edge[vanisharrow] (avanish);
%	\path[<-] (avanish) edge[vanisharrow] (ainf);
%\end{tikzpicture}
%\EFQ/, \TT/, \DP/, \DGP/, \WLEM/
%\HE/, \LEM/

% v-const
\begin{center}
\begin{tikzpicture}[kripke]
	\node[world] (a) {$A$};
	\node[world] (b) [above left=0.7cm and 1.4cm of a] {$B$};
	\node[world] (c) [above right=0.7cm and 1.4cm of a] {$C$};
	\worldlabel{a}{$\{s, t\}$}{}
	\worldlabel{b}{$\{s, t\}$}{}
	\worldlabel{c}{$\{s, t\}$}{}
	\path[->] (a) edge (b);
	\path[->] (a) edge (c);
\end{tikzpicture}
\end{center}
is a model for \EFQ/, \TT/, \DNSU/, \CD/
and a countermodel for \DP/, \HE/, \DGP/, \WLEM/, \DNSE/.

% v-const-lem
\begin{center}
\begin{tikzpicture}[kripke]
	\node[world] (a) {$A$};
	\node[world] (b) [above left=0.7cm and 1.4cm of a] {$B$};
	\node[world] (c) [above right=0.7cm and 1.4cm of a] {$C$};
	\worldlabel{a}{$\{s, t\}$}{}
	\worldlabel{b}{$\{s, t\}$}{$\bot$}
	\worldlabel{c}{$\{s, t\}$}{$\bot$}
	\path[->] (a) edge (b);
	\path[->] (a) edge (c);
\end{tikzpicture}
\end{center}
is a model for \GLPOA/, \LEM/
and a countermodel for \DP/, \HE/, \DGP/.

% v-one-term
\begin{center}
\begin{tikzpicture}[kripke]
	\node[world] (a) {$A$};
	\node[world] (b) [above left=0.7cm and 1.4cm of a] {$B$};
	\node[world] (c) [above right=0.7cm and 1.4cm of a] {$C$};
	\worldlabel{a}{$\{s\}$}{}
	\worldlabel{b}{$\{s\}$}{}
	\worldlabel{c}{$\{s\}$}{}
	\path[->] (a) edge (b);
	\path[->] (a) edge (c);
\end{tikzpicture}
\end{center}
is a model for \EFQ/, \DP/, \HE/
and a countermodel for \DGP/, \WLEM/.

% v-one-term-lobot
\begin{center}
\begin{tikzpicture}[kripke]
	\node[world] (a) {$A$};
	\node[world] (b) [above left=0.7cm and 1.4cm of a] {$B$};
	\node[world] (c) [above right=0.7cm and 1.4cm of a] {$C$};
	\worldlabel{a}{$\{s\}$}{$\bot$}
	\worldlabel{b}{$\{s\}$}{$\bot$}
	\worldlabel{c}{$\{s\}$}{$\bot$}
	\path[->] (a) edge (b);
	\path[->] (a) edge (c);
\end{tikzpicture}
\end{center}
is a model for \TT/, \DP/, \HE/, \GLPOA/
and a countermodel for \DGP/.

% diamond-const
\begin{center}
\begin{tikzpicture}[kripke]
	\node[world] (a) {$A$};
	\node[world] (b) [above left=0.7cm and 1.4cm of a] {$B$};
	\node[world] (c) [above right=0.7cm and 1.4cm of a] {$C$};
	\node[world] (d) [above left=0.7cm and 1.4cm of c] {$D$};
	\worldlabel{a}{$\{s, t\}$}{}
	\worldlabel{b}{$\{s, t\}$}{}
	\worldlabel{c}{$\{s, t\}$}{}
	\worldlabel{d}{$\{s, t\}$}{}
	\path[->] (a) edge (b);
	\path[->] (a) edge (c);
	\path[->] (b) edge (d);
	\path[->] (c) edge (d);
\end{tikzpicture}
\end{center}
is a model for \EFQ/, \TT/, \WLEM/, \GMP/
and a countermodel for \DP/, \HE/, \DGP/.

% one-world-one-term
\begin{center}
\begin{tikzpicture}[kripke]
	\node[world] (a) {$A$};
	\worldlabel{a}{$\{s\}$}{}
\end{tikzpicture}
\end{center}
is a model for \LEM/, \WLEM/, \DGP/, \GLPOA/, \GMP/, \DP/, \HE/, \DNSU/, \DNSE/, \CD/, \EFQ/
and a countermodel for \TT/.

% two-world-growing-terms-2-3
\begin{center}
\begin{tikzpicture}[kripke]
	\node[world] (a) {$A$};
	\node[world] (b) [above=of a] {$B$};
	\worldlabel{a}{$\{0, 1\}$}{}
	\worldlabel{b}{$\{0, 1, 2\}$}{}
	\path[->] (a) edge (b);
\end{tikzpicture}
\end{center}
is a model for \TT/, \EFQ/, \DGP/, \WLEM/, \DNSU/
and a countermodel for \DNSE/, \CD/.

% two-world-growing-terms-2-3-lem
\begin{center}
\begin{tikzpicture}[kripke]
	\node[world] (a) {$A$};
	\node[world] (b) [above=of a] {$B$};
	\worldlabel{a}{$\{0, 1\}$}{}
	\worldlabel{b}{$\{0, 1, 2\}$}{$\bot$}
	\path[->] (a) edge (b);
\end{tikzpicture}
\end{center}
is a model for \TT/, \LEM/
and a countermodel for \EFQ/, \GMP/, \CD/, \DNSU/, \DP/, \HE/.

% two-world-growing-terms-lobot
\begin{center}
\begin{tikzpicture}[kripke]
	\node[world] (a) {$A$};
	\node[world] (b) [above=of a] {$B$};
	\worldlabel{a}{$\{0\}$}{$\bot$}
	\worldlabel{b}{$\{0, 1\}$}{$\bot$}
	\path[->] (a) edge (b);
\end{tikzpicture}
\end{center}
is a model for \TT/, \DGP/, \GMP/, \GLPOA/
and a countermodel for \EFQ/, \CD/, \HE/, \DP/.

%nonfull already presented

\bibliographystyle{abbrv}
\bibliography{logicbib}
%%%%%%%%%%%%%%%%%%%%%%%%%%%%%%%%%%%%%%%%%%%%%%%%%%%%%%%%%%%%%%%%%%%%%%%%%%%%%%%%

\begin{landscape}
\section*{Appendix}

\begin{proposition} \label{prop:DNE->LEM}
$\DNEs \reduces \LEMs$
\begin{proof}
\begin{deduction}
\AxiomC{}
\RightLabel{DNE}
\UnaryInfC{$\Tneg{\Tneg{\left(A \Tor \Tneg{A}\right)}} \Tarrow \left(A \Tor \Tneg{A}\right)$}
	\AxiomC{}
	\UnaryInfC{$\Tneg{\left(A \Tor \Tneg{A}\right)}$}
		\AxiomC{}
		\UnaryInfC{$\Tneg{\left(A \Tor \Tneg{A}\right)}$}
			\AxiomC{}
			\UnaryInfC{$A$}
			\RightLabel{\Tdisjintro}
			\UnaryInfC{$A \Tor \Tneg{A}$}
		\RightLabel{\Tarrowelim}
		\BinaryInfC{$\bot$}
		\RightLabel{\Tarrowintro}
		\UnaryInfC{$\Tneg{A}$}
		\RightLabel{\Tdisjintro}
		\UnaryInfC{$A \Tor \Tneg{A}$}
	\RightLabel{\Tarrowelim}
	\BinaryInfC{$\bot$}
	\RightLabel{\Tarrowintro}
	\UnaryInfC{$\Tneg{\Tneg{\left(A \Tor \Tneg{A}\right)}}$}
\RightLabel{\Tarrowelim}
\BinaryInfC{$A \Tor \Tneg{A}$}
\end{deduction}
\end{proof}
\end{proposition}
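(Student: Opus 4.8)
The goal is to show that over minimal logic extended by \DNE/, every instance $A \mor \lnot A$ of \LEM/ is derivable. The plan is to use the fact that the double negation of excluded middle, $\lnot\lnot(A \mor \lnot A)$, is already a theorem of \emph{minimal} logic alone, and then to strip the double negation using the single instance $\DNEi{A \mor \lnot A}$, namely $\lnot\lnot(A \mor \lnot A) \mimp (A \mor \lnot A)$.

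First I would derive $\lnot\lnot(A \mor \lnot A)$ without appeal to any extra scheme. Assuming $\lnot(A \mor \lnot A)$, I would assume $A$, introduce the left disjunct to obtain $A \mor \lnot A$, and apply the assumed negation to reach $\bot$. Discharging $A$ yields $\lnot A$, from which the right disjunct gives a second copy of $A \mor \lnot A$; applying the same negation assumption once more again yields $\bot$. Discharging $\lnot(A \mor \lnot A)$ then produces $\lnot\lnot(A \mor \lnot A)$.

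Next I would invoke the \DNE/ rule at the formula $A \mor \lnot A$ to obtain $\lnot\lnot(A \mor \lnot A) \mimp (A \mor \lnot A)$, and conclude $A \mor \lnot A$ by arrow elimination. Since $A$ is arbitrary, this establishes $\DNEs \reduces \LEMs$.

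The only point requiring care---and the reason the argument is worth spelling out---is that the derivation of $\lnot\lnot(A \mor \lnot A)$ must remain within minimal logic, i.e.\ it must not silently invoke \EFQ/. This is guaranteed by the structure above: at each step where a contradiction appears, we are discharging an assumption so as to \emph{produce} a formula whose ultimate consequent is $\bot$, and we never \emph{consume} $\bot$ to manufacture an arbitrary formula. The feature making this possible is that the negated premise $\lnot(A \mor \lnot A)$ is used twice, once against each disjunct; this is precisely the step carried out in the natural deduction tree below.
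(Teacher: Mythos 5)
Your proposal is correct and is essentially the paper's own proof: the paper's deduction likewise derives $\lnot\lnot(A \mor \lnot A)$ in minimal logic by using the assumption $\lnot(A \mor \lnot A)$ twice (once against $A$, once against the derived $\lnot A$), and then discharges it and applies the instance $\DNEi{A \mor \lnot A}$ by arrow elimination. Your remark that no step silently consumes $\bot$ (i.e.\ no hidden \EFQ/) matches the structure of the paper's tree exactly.
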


\begin{proposition} \label{prop:DNE->EFQ}
$\DNEs \reduces \EFQs$
\begin{proof}
\begin{deduction}
\AxiomC{}
\RightLabel{DNE}
\UnaryInfC{$\Tneg{\Tneg{A}} \Tarrow A$}
	\AxiomC{}
	\UnaryInfC{$\bot$}
	\RightLabel{\Tarrowintro}
	\UnaryInfC{$\Tneg{\Tneg{A}}$}
\RightLabel{\Tarrowelim}
\BinaryInfC{$A$}
\RightLabel{\Tarrowintro}
\UnaryInfC{$\bot \Tarrow A$}
\end{deduction}
\end{proof}
\end{proposition}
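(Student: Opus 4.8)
The plan is to derive $\bot \mimp A$ in minimal logic extended by the \DNE/ rule, for an arbitrary formula $A$; by the definition of `$\reduces$' this suffices to conclude $\DNEs \reduces \EFQs$. Recalling that $\lnot A$ abbreviates $A \mimp \bot$ and $\lnot\lnot A$ abbreviates $(A \mimp \bot) \mimp \bot$, the key observation is that $\bot$ trivially entails $\lnot\lnot A$: under an assumption of $\bot$ one may open a further assumption $\lnot A$, immediately reuse the available $\bot$, and then discharge $\lnot A$ by $\mimp$-introduction to obtain $\lnot\lnot A$.

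Concretely, I would first open an assumption of $\bot$, to be closed at the very end. Inside this scope I would construct $\lnot\lnot A$ by the trivial argument just described. I would then invoke the rule instance giving $\DNEi{A}$, i.e.\ the premise $\lnot\lnot A \mimp A$, and apply $\mimp$-elimination to it together with the derived $\lnot\lnot A$ to obtain $A$. Finally, discharging the outermost assumption of $\bot$ by $\mimp$-introduction yields $\bot \mimp A$, which is exactly $\EFQi{A}$.

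I expect no genuine obstacle: the entire content is the observation that $\bot$ discharges any assumption (here $\lnot A$), so the double negation needed to fire \DNE/ is available for free. The derivation is purely propositional and uses no quantifier rules, mirroring the standard fact that stability implies explosion.
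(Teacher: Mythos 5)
Your proposal is correct and is essentially the paper's own proof: assume $\bot$, obtain $\lnot\lnot A$ by a (vacuous) $\mimp$-introduction discharging $\lnot A$, fire the \DNE/ instance $\lnot\lnot A \mimp A$ by $\mimp$-elimination, and discharge $\bot$ to conclude $\bot \mimp A$. The paper's derivation performs exactly these steps, merely writing the step from $\bot$ to $\lnot\lnot A$ as a single $\mimp$-introduction without spelling out the unused assumption $\lnot A$.
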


\begin{proposition} \label{prop:LEM,EFQ->DNE}
$\LEMs, \EFQs \reduces \DNEs$
\begin{proof}
\begin{deduction}
\AxiomC{}
\RightLabel{LEM}
\UnaryInfC{$A \Tor \Tneg{A}$}
	\AxiomC{}
	\UnaryInfC{$A$}
		\AxiomC{}
		\RightLabel{EFQ}
		\UnaryInfC{$\bot \Tarrow A$}
			\AxiomC{}
			\UnaryInfC{$\Tneg{\Tneg{A}}$}
				\AxiomC{}
				\UnaryInfC{$\Tneg{A}$}
			\RightLabel{\Tarrowelim}
			\BinaryInfC{$\bot$}
		\RightLabel{\Tarrowelim}
		\BinaryInfC{$A$}
\RightLabel{\Tdisjelim}
\TrinaryInfC{$A$}
\RightLabel{\Tarrowintro}
\UnaryInfC{$\Tneg{\Tneg{A}} \Tarrow A$}
\end{deduction}
\end{proof}
\end{proposition}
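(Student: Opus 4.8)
The plan is to establish $\derives_{\LEMs, \EFQs} \DNEi{A}$ for an arbitrary formula $A$, that is, to derive $\lnot\lnot A \mimp A$ in minimal logic extended by the two schemata. I would begin the natural deduction by assuming $\lnot\lnot A$, intending to discharge this assumption by $\mimp$-introduction in the final step so as to produce the target implication.

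With $\lnot\lnot A$ open, I would apply $\LEMi{A}$ to obtain $A \mor \lnot A$ and argue by $\mor$-elimination. The left disjunct is trivial: under the assumption $A$ the desired conclusion $A$ is immediate. For the right disjunct, assuming $\lnot A$, I would apply the open assumption $\lnot\lnot A$ (which is $(\lnot A) \mimp \bot$) to $\lnot A$ by $\mimp$-elimination, yielding $\bot$. Here minimal logic stalls, and I would invoke $\EFQi{A} = \bot \mimp A$ by $\mimp$-elimination to recover $A$. Since both branches deliver $A$, the $\mor$-elimination concludes with $A$; discharging $\lnot\lnot A$ by $\mimp$-introduction then gives $\lnot\lnot A \mimp A$, which is exactly $\DNEi{A}$.

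I do not expect any real obstacle, as the derivation is entirely routine. The single point worth emphasising is that \EFQ/ is indispensable precisely in the negative branch: from $\lnot A$ and $\lnot\lnot A$ minimal logic yields only $\bot$, and it is \EFQ/ that bridges the gap from $\bot$ to the arbitrary conclusion $A$. This is why \LEM/ alone does not suffice over minimal logic---explosion is genuinely needed to dispatch the contradictory case.
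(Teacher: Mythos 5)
Your proposal is correct and is essentially identical to the paper's own derivation: both apply $\LEMi{A}$, conclude $A$ trivially in the positive case, and in the negative case obtain $\bot$ from the open assumption $\lnot\lnot A$ applied to $\lnot A$, bridge to $A$ via $\EFQi{A}$, and finish by $\mor$-elimination followed by $\mimp$-introduction discharging $\lnot\lnot A$. No discrepancies to report.
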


\clearpage

\begin{proposition} \label{prop:HE->IP}
$\HEs \reduces \IPs$
\begin{proof}
\begin{deduction}
\AxiomC{}
\RightLabel{HE}
\UnaryInfC{$\Texists_{y} \left(\Texists_{x} P{x} \Tarrow P{y}\right)$}
	\AxiomC{}
	\UnaryInfC{$\Texists_{x} P{x} \Tarrow P{y}$}
		\AxiomC{}
		\UnaryInfC{$\Texists_{x} A \Tarrow \Texists_{x} P{x}$}
			\AxiomC{}
			\UnaryInfC{$\Texists_{x} A$}
		\RightLabel{\Tarrowelim}
		\BinaryInfC{$\Texists_{x} P{x}$}
	\RightLabel{\Tarrowelim}
	\BinaryInfC{$P{y}$}
	\RightLabel{\Tarrowintro}
	\UnaryInfC{$\Texists_{x} A \Tarrow P{y}$}
	\RightLabel{\Texistintro}
	\UnaryInfC{$\Texists_{x} \left(\Texists_{x} A \Tarrow P{x}\right)$}
\RightLabel{\Texistelim}
\BinaryInfC{$\Texists_{x} \left(\Texists_{x} A \Tarrow P{x}\right)$}
\RightLabel{\Tarrowintro}
\UnaryInfC{$\left(\Texists_{x} A \Tarrow \Texists_{x} P{x}\right) \Tarrow \Texists_{x} \left(\Texists_{x} A \Tarrow P{x}\right)$}
\end{deduction}
\end{proof}
\end{proposition}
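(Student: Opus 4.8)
The plan is to give a single, direct natural-deduction derivation of the instance $\IPi{Px, Q}$ using one application of the scheme \HE/. The guiding observation is that \HE/ hands us a ``Hilbert witness'' $y$ satisfying $\existsx Px \mimp Py$, and this $y$ is exactly the witness required for the existential in the conclusion of \IP/: once we know that $\existsx Px$ implies $Py$ for this particular $y$, any hypothesis forcing $\existsx Px$ will force $Py$, so that same hypothesis can be packaged as $\existsx Q \mimp Py$ and then existentially generalised.

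First I would instantiate \HE/ at the predicate $P$ to obtain $\existsy(\existsx Px \mimp Py)$, and immediately open an existential elimination on it, introducing a fresh eigenvariable $y$ together with the assumption $\existsx Px \mimp Py$. Taking the premise of the \IP/ instance, $\existsx Q \mimp \existsx Px$, as an open assumption, I would then assume $\existsx Q$, apply this premise to get $\existsx Px$, and feed that into the \HE/-assumption to obtain $Py$. Discharging the assumption $\existsx Q$ yields $\existsx Q \mimp Py$, and an existential introduction, using $y$ as the witness for the bound variable, gives $\existsx(\existsx Q \mimp Px)$. Closing the existential elimination and then discharging the \IP/ premise produces $(\existsx Q \mimp \existsx Px) \mimp \existsx(\existsx Q \mimp Px)$, as required.

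The main point demanding care is the eigenvariable condition governing the existential elimination on the \HE/ formula: the witness $y$ it introduces must not occur free in any assumption still open when the elimination is closed, nor in the conclusion. At that stage the only open assumption is the \IP/ premise $\existsx Q \mimp \existsx Px$, and the conclusion $\existsx(\existsx Q \mimp Px)$ contains no free occurrence of $y$ (the final existential introduction having abstracted it away); so it suffices to pick $y$ fresh, occurring in neither $P$ nor $Q$, which the paper's convention on bound variables guarantees is possible. Notably the derivation uses only the introduction and elimination rules for $\exists$ and $\mimp$, with no appeal to \EFQ/, so it goes through over minimal logic alone; this is the easy half of the equivalence between \HE/ and \IP/.
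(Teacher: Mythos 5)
Your derivation is exactly the paper's own proof: instantiate \HE/ at $P$, open an existential elimination introducing the witness $y$ with assumption $\existsx Px \mimp Py$, chain the \IP/ premise through it to get $\existsx Q \mimp Py$, existentially generalise, close the elimination, and discharge the premise (the paper merely writes $A$ where you write $Q$). Your attention to the eigenvariable condition and the observation that no \EFQ/ is needed are both correct and consistent with the paper's treatment.
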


\begin{proposition} \label{prop:IP->HE}
$\IPs \reduces \HEs$
\begin{proof}
\begin{deduction}
\AxiomC{}
\RightLabel{IP}
\UnaryInfC{$\left(\Texists_{x} P{x} \Tarrow \Texists_{x} P{x}\right) \Tarrow \Texists_{x} \left(\Texists_{x} P{x} \Tarrow P{x}\right)$}
	\AxiomC{}
	\UnaryInfC{$\Texists_{x} P{x}$}
	\RightLabel{\Tarrowintro}
	\UnaryInfC{$\Texists_{x} P{x} \Tarrow \Texists_{x} P{x}$}
\RightLabel{\Tarrowelim}
\BinaryInfC{$\Texists_{x} \left(\Texists_{x} P{x} \Tarrow P{x}\right)$}
	\AxiomC{}
	\UnaryInfC{$\Texists_{x} P{x} \Tarrow P{x}$}
	\RightLabel{\Texistintro}
	\UnaryInfC{$\Texists_{y} \left(\Texists_{x} P{x} \Tarrow P{y}\right)$}
\RightLabel{\Texistelim}
\BinaryInfC{$\Texists_{y} \left(\Texists_{x} P{x} \Tarrow P{y}\right)$}
\end{deduction}
\end{proof}
\end{proposition}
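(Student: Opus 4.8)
The goal is to derive every instance of $\HEi{Px} = \existsy(\existsx Px \mimp Py)$ from instances of $\IP$ over minimal logic. The plan is to choose the single $\IP$-instance whose consequent is, up to renaming of the bound variable, exactly the target, and then discharge that instance's premise for free.

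Working from the general schematic form $\IPi{Px, Q} := (\existsx Q \mimp \existsx Px) \mimp \existsx(\existsx Q \mimp Px)$ (not the variable-free version, so that $x$ may occur in $Q$), I would instantiate $Q := Px$. Since $\existsx Q$ then binds the $x$ in $Px$ and becomes $\existsx Px$, this produces the minimal theorem
\[
	(\existsx Px \mimp \existsx Px) \mimp \existsx(\existsx Px \mimp Px).
\]
Its antecedent $\existsx Px \mimp \existsx Px$ is derivable outright: assume $\existsx Px$ and discharge it immediately by $\mimp$-introduction. One application of $\mimp$-elimination to the $\IP$-instance and this identity then yields $\existsx(\existsx Px \mimp Px)$.

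What remains is purely syntactic: converting $\existsx(\existsx Px \mimp Px)$ into the form $\existsy(\existsx Px \mimp Py)$ required by $\HE$. I would achieve this with one $\exists$-elimination, introducing a fresh eigenvariable for which $\existsx Px \mimp Px$ holds, followed by an $\exists$-introduction re-binding under the new name $y$, which closes the elimination with $\existsy(\existsx Px \mimp Py)$. This last step --- routing through an eigenvariable so that the outer quantifier and the already-bound occurrence of $x$ inside $\existsx Px$ do not clash, and the eigenvariable condition on $\exists$-elimination is respected --- is the only point requiring care; there is no genuine difficulty beyond this bookkeeping. Together with Proposition~\ref{prop:HE->IP}, this establishes that $\HE$ and $\IP$ are interderivable over minimal logic.
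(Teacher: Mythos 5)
Your proposal is correct and follows essentially the same route as the paper's own derivation: both instantiate \IP/ at $Q := Px$ to obtain $(\Texists_{x} P{x} \Tarrow \Texists_{x} P{x}) \Tarrow \Texists_{x}(\Texists_{x} P{x} \Tarrow P{x})$, discharge the trivial antecedent by $\Tarrow$-introduction and elimination, and then convert $\Texists_{x}(\Texists_{x} P{x} \Tarrow P{x})$ into $\Texists_{y}(\Texists_{x} P{x} \Tarrow P{y})$ via a single $\Texists$-elimination paired with $\Texists$-introduction on a fresh eigenvariable. No gaps; the bookkeeping you flag is exactly where the paper's natural-deduction tree does its work.
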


\begin{proposition} \label{prop:LEM->GLPO}
$\LEMs \reduces \GLPOs$
\begin{proof}
\begin{deduction}
\AxiomC{}
\RightLabel{LEM}
\UnaryInfC{$\Texists_{x} P{x} \Tor \Tneg{\Texists_{x} P{x}}$}
	\AxiomC{}
	\UnaryInfC{$\Texists_{x} P{x}$}
	\RightLabel{\Tdisjintro}
	\UnaryInfC{$\Tforall_{x} \Tneg{P{x}} \Tor \Texists_{x} P{x}$}
		\AxiomC{}
		\UnaryInfC{$\Tneg{\Texists_{x} P{x}}$}
			\AxiomC{}
			\UnaryInfC{$P{x}$}
			\RightLabel{\Texistintro}
			\UnaryInfC{$\Texists_{x} P{x}$}
		\RightLabel{\Tarrowelim}
		\BinaryInfC{$\bot$}
		\RightLabel{\Tarrowintro}
		\UnaryInfC{$\Tneg{P{x}}$}
		\RightLabel{\Tunivintro}
		\UnaryInfC{$\Tforall_{x} \Tneg{P{x}}$}
		\RightLabel{\Tdisjintro}
		\UnaryInfC{$\Tforall_{x} \Tneg{P{x}} \Tor \Texists_{x} P{x}$}
\RightLabel{\Tdisjelim}
\TrinaryInfC{$\Tforall_{x} \Tneg{P{x}} \Tor \Texists_{x} P{x}$}
\end{deduction}
\end{proof}
\end{proposition}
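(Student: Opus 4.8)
The plan is to apply \LEM/ to the sentence $\exists_x Px$, obtaining the disjunction $\exists_x Px \mor \lnot\exists_x Px$, and then to argue by cases using disjunction elimination. The conclusion $\forall_x \lnot Px \mor \exists_x Px$ is itself a disjunction, so in each branch I need only produce one of its two disjuncts and finish with $\mor$-introduction.

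The right branch is immediate: under the assumption $\exists_x Px$ I introduce the right disjunct directly. The left branch, under the assumption $\lnot\exists_x Px$, is where the work lies. First I would assume $Px$ for a fresh variable $x$, apply $\exists$-introduction to obtain $\exists_x Px$, and then use $\mimp$-elimination against $\lnot\exists_x Px$ to reach $\bot$. Discharging the assumption $Px$ by $\mimp$-introduction yields $\lnot Px$. I would then apply $\forall$-introduction to obtain $\forall_x \lnot Px$, and close that branch with $\mor$-introduction into the left disjunct. Disjunction elimination on the two branches then gives the scheme.

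The one step that needs care --- and which I would flag as the main point to check rather than a genuine obstacle --- is the $\forall$-introduction, whose side condition demands that $x$ not occur free in any undischarged assumption. At that stage the only open assumption is $\lnot\exists_x Px$, in which every occurrence of $x$ is bound by the quantifier, so the side condition is satisfied and the generalisation is legitimate. I would also observe that the entire derivation stays within pure minimal logic: the passage from $\bot$ to $\lnot Px$ arises by $\mimp$-introduction discharging $Px$, not by \EFQ/, so no explosion principle is invoked. This is precisely what allows \GLPO/ to be recovered from \LEM/ alone, rather than from \LEM/ together with \EFQ/.
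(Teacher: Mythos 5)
Your proposal is correct and matches the paper's own proof essentially step for step: the same case split on $\LEMi{\exists_x Px}$, the same derivation of $\lnot Px$ by discharging $Px$ after reaching $\bot$, and the same $\forall$-introduction whose side condition holds because $x$ is bound in the only open assumption $\lnot\exists_x Px$. Your remarks on the eigenvariable condition and on not needing \EFQ/ are exactly the right points to flag, and nothing further is missing.
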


\begin{proposition} \label{prop:GLPO->LEM}
$\GLPOs \reduces \LEMs$
\begin{proof}
\begin{deduction}
\AxiomC{}
\RightLabel{GLPO}
\UnaryInfC{$\Tforall_{x} \Tneg{A} \Tor \Texists_{x} A$}
	\AxiomC{}
	\UnaryInfC{$\Tforall_{x} \Tneg{A}$}
	\RightLabel{\Tunivelim}
	\UnaryInfC{$\Tneg{A}$}
	\RightLabel{\Tdisjintro}
	\UnaryInfC{$A \Tor \Tneg{A}$}
		\AxiomC{}
		\UnaryInfC{$\Texists_{x} A$}
			\AxiomC{}
			\UnaryInfC{$A$}
		\RightLabel{\Texistelim}
		\BinaryInfC{$A$}
		\RightLabel{\Tdisjintro}
		\UnaryInfC{$A \Tor \Tneg{A}$}
\RightLabel{\Tdisjelim}
\TrinaryInfC{$A \Tor \Tneg{A}$}
\end{deduction}
\end{proof}
\end{proposition}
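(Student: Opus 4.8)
The plan is to derive an arbitrary instance $\LEMi{A}$ by feeding $\GLPOs$ a predicate that ignores its argument. Concretely, I would instantiate the scheme at the formula $A$ itself, viewed as a predicate $Px$ in which $x$ does not occur. The resulting instance of $\GLPOs$ is $\forallx \lnot A \mor \existsx A$, and the entire derivation then amounts to stripping the vacuous quantifiers from each disjunct and reassembling the outcome as $A \mor \lnot A$.

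The derivation itself proceeds by $\mor$-elimination on $\forallx \lnot A \mor \existsx A$. In the first case I assume $\forallx \lnot A$; a single $\forall$-elimination yields $\lnot A$, and $\mor$-introduction then gives $A \mor \lnot A$. In the second case I assume $\existsx A$ and apply $\exists$-elimination: the assumption discharged by the rule is $A$, which is already the conclusion I want in that branch, so a further $\mor$-introduction again produces $A \mor \lnot A$. Since both branches terminate in $A \mor \lnot A$, the $\mor$-elimination closes the argument and establishes $\derives_{\GLPOs} \LEMi{A}$ for arbitrary $A$, hence $\GLPOs \reduces \LEMs$.

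There is no real obstacle here; the only point demanding care is the treatment of quantifiers binding a variable that does not actually occur in the matrix. On the existential side one must verify that $\exists$-elimination is legitimately applicable, and its eigenvariable side condition holds precisely because $x$ is not free in $A$ (so the eigenvariable cannot escape into an open assumption or the conclusion). Dually, $\forall$-elimination may instantiate at any term to recover $\lnot A$ from $\forallx \lnot A$, since the substitution leaves $\lnot A$ unchanged. Crucially, no appeal to \EFQ/ or to double negation is made anywhere: every step is a bare minimal-logic rule, which is exactly why the reduction holds over minimal logic and not merely over its intuitionistic or classical extensions. Together with the companion reduction $\LEMs \reduces \GLPOs$, this shows that $\GLPOs$ and $\LEMs$ are interderivable over minimal logic.
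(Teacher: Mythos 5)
Your proposal is correct and is essentially identical to the paper's own derivation: both instantiate \GLPO/ at $A$ viewed as a predicate with vacuous $x$, then perform $\mor$-elimination, using $\forall$-elimination in the first branch and $\exists$-elimination (with eigenvariable condition satisfied since $x$ is not free in $A$) in the second, closing each branch with $\mor$-introduction. Your additional remarks about the side conditions and the absence of \EFQ/ are accurate but do not change the argument.
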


\clearpage

\begin{proposition} \label{prop:DNSU->WGMP}
$\DNSUs \reduces \WGMPs$
\begin{proof}
\begin{deduction}
\AxiomC{}
\RightLabel{DNSU}
\UnaryInfC{$\Tforall_{x} \Tneg{\Tneg{P{x}}} \Tarrow \Tneg{\Tneg{\Tforall_{x} P{x}}}$}
	\AxiomC{}
	\UnaryInfC{$\Tneg{\Texists_{x} \Tneg{P{x}}}$}
		\AxiomC{}
		\UnaryInfC{$\Tneg{P{x}}$}
		\RightLabel{\Texistintro}
		\UnaryInfC{$\Texists_{x} \Tneg{P{x}}$}
	\RightLabel{\Tarrowelim}
	\BinaryInfC{$\bot$}
	\RightLabel{\Tarrowintro}
	\UnaryInfC{$\Tneg{\Tneg{P{x}}}$}
	\RightLabel{\Tunivintro}
	\UnaryInfC{$\Tforall_{x} \Tneg{\Tneg{P{x}}}$}
\RightLabel{\Tarrowelim}
\BinaryInfC{$\Tneg{\Tneg{\Tforall_{x} P{x}}}$}
	\AxiomC{}
	\UnaryInfC{$\Tneg{\Tforall_{x} P{x}}$}
\RightLabel{\Tarrowelim}
\BinaryInfC{$\bot$}
\RightLabel{\Tarrowintro}
\UnaryInfC{$\Tneg{\Tneg{\Texists_{x} \Tneg{P{x}}}}$}
\RightLabel{\Tarrowintro}
\UnaryInfC{$\Tneg{\Tforall_{x} P{x}} \Tarrow \Tneg{\Tneg{\Texists_{x} \Tneg{P{x}}}}$}
\end{deduction}
\end{proof}
\end{proposition}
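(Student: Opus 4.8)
The plan is to first pin down the target. Here $\WGMPs$ is the \emph{Weak General Markov's Principle}, obtained from $\GMPi{Px}$ by double-negating the conclusion, namely the scheme $\lnot\forallx Px \mimp \lnot\lnot\existsx\lnot Px$. So the goal is to produce, in minimal logic extended by the scheme $\DNSUi{Px} := \forallx\lnot\lnot Px \mimp \lnot\lnot\forallx Px$, a derivation of $\lnot\forallx Px \mimp \lnot\lnot\existsx\lnot Px$ for an arbitrary predicate $Px$.

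First I would open the two outer assumptions suggested by the shape of the conclusion: the antecedent $\lnot\forallx Px$, and (aiming for a reductio that yields the doubly-negated consequent) the assumption $\lnot\existsx\lnot Px$. The key observation is that this second assumption is exactly strong enough to manufacture the premise that $\DNSU$ consumes: for a fixed eigenvariable $x$, assuming $\lnot Px$ gives $\existsx\lnot Px$ by existential introduction, which contradicts $\lnot\existsx\lnot Px$ to yield $\bot$; discharging $\lnot Px$ produces $\lnot\lnot Px$, and then universal introduction gives $\forallx\lnot\lnot Px$. With this in hand I would apply $\DNSU$ by modus ponens to obtain $\lnot\lnot\forallx Px$, combine it with the still-open assumption $\lnot\forallx Px$ to get $\bot$, discharge $\lnot\existsx\lnot Px$ to get $\lnot\lnot\existsx\lnot Px$, and finally discharge $\lnot\forallx Px$ to complete the implication.

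The derivation is essentially routine minimal-logic bookkeeping, so there is no deep obstacle; the single step worth checking carefully is the universal introduction that produces $\forallx\lnot\lnot Px$. Its eigenvariable side condition requires that $x$ not occur free in any open assumption at that point, and here the only open assumptions are $\lnot\existsx\lnot Px$ and $\lnot\forallx Px$ (together with the $\DNSU$ axiom), in all of which $x$ occurs only bound, so the condition is met. The one genuinely non-mechanical insight is simply recognising that $\lnot\existsx\lnot Px$ is precisely the hypothesis needed to build the antecedent $\forallx\lnot\lnot Px$ of $\DNSU$.
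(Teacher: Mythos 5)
Your proposal is correct and follows essentially the same route as the paper's derivation: assume $\lnot\exists_x \lnot Px$, use it (via $\exists$-introduction and discharge of $\lnot Px$, then $\forall$-introduction) to build the antecedent $\forall_x \lnot\lnot Px$ of \DNSU/, apply modus ponens to get $\lnot\lnot\forall_x Px$, derive $\bot$ against $\lnot\forall_x Px$, and discharge in the same order. Your identification of \WGMP/ as $\lnot\forall_x Px \mimp \lnot\lnot\exists_x \lnot Px$ and your check of the eigenvariable condition both match the paper exactly.
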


\begin{proposition} \label{prop:WGMP->DNSU}
$\WGMPs \reduces \DNSUs$
\begin{proof}
\begin{deduction}
\AxiomC{}
\RightLabel{WGMP}
\UnaryInfC{$\Tneg{\Tforall_{x} P{x}} \Tarrow \Tneg{\Tneg{\Texists_{x} \Tneg{P{x}}}}$}
	\AxiomC{}
	\UnaryInfC{$\Tneg{\Tforall_{x} P{x}}$}
\RightLabel{\Tarrowelim}
\BinaryInfC{$\Tneg{\Tneg{\Texists_{x} \Tneg{P{x}}}}$}
	\AxiomC{}
	\UnaryInfC{$\Texists_{x} \Tneg{P{x}}$}
		\AxiomC{}
		\UnaryInfC{$\Tforall_{x} \Tneg{\Tneg{P{x}}}$}
		\RightLabel{\Tunivelim}
		\UnaryInfC{$\Tneg{\Tneg{P{x}}}$}
			\AxiomC{}
			\UnaryInfC{$\Tneg{P{x}}$}
		\RightLabel{\Tarrowelim}
		\BinaryInfC{$\bot$}
	\RightLabel{\Texistelim}
	\BinaryInfC{$\bot$}
	\RightLabel{\Tarrowintro}
	\UnaryInfC{$\Tneg{\Texists_{x} \Tneg{P{x}}}$}
\RightLabel{\Tarrowelim}
\BinaryInfC{$\bot$}
\RightLabel{\Tarrowintro}
\UnaryInfC{$\Tneg{\Tneg{\Tforall_{x} P{x}}}$}
\RightLabel{\Tarrowintro}
\UnaryInfC{$\Tforall_{x} \Tneg{\Tneg{P{x}}} \Tarrow \Tneg{\Tneg{\Tforall_{x} P{x}}}$}
\end{deduction}
\end{proof}
\end{proposition}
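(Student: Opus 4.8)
The plan is to derive an arbitrary instance of \DNSU/ from the matching instance of \WGMP/, recalling that the relevant instance of \WGMP/ is $\lnot\forallx Px \mimp \lnot\lnot\existsx\lnot Px$, as exhibited by the conclusion of Proposition~\ref{prop:DNSU->WGMP}. Since the consequent of \DNSUi{Px} is itself a double negation, the natural strategy is to assume its antecedent $\forallx\lnot\lnot Px$ together with $\lnot\forallx Px$ and aim for $\bot$; discharging $\lnot\forallx Px$ then yields $\lnot\lnot\forallx Px$, and a final $\mimp$-introduction discharging $\forallx\lnot\lnot Px$ delivers the full implication.

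First I would instantiate \WGMP/ and apply it to the assumption $\lnot\forallx Px$, obtaining $\lnot\lnot\existsx\lnot Px$. The remaining task is to contradict this by establishing $\lnot\existsx\lnot Px$ from the premise $\forallx\lnot\lnot Px$. To this end I would assume $\existsx\lnot Px$ and eliminate the existential: from the witnessing assumption $\lnot Px$ and the instance $\lnot\lnot Px$ obtained by $\forall$-elimination on $\forallx\lnot\lnot Px$, one application of $\mimp$-elimination produces $\bot$. As $x$ is not free in $\bot$, the $\exists$-elimination is licensed, discharging $\existsx\lnot Px$ and yielding $\lnot\existsx\lnot Px$ after $\mimp$-introduction. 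Pairing this with $\lnot\lnot\existsx\lnot Px$ gives the desired $\bot$, and the two outer discharges complete the derivation of \DNSUi{Px}.

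The only genuinely delicate point is the intermediate lemma that $\forallx\lnot\lnot Px$ entails $\lnot\existsx\lnot Px$. This is precisely the minimally-valid (``easy'') direction of the De Morgan duality between $\forall\lnot\lnot$ and $\lnot\exists\lnot$, and crucially it needs no \EFQ/, since the contradiction is drawn directly against the instantiated double negation $\lnot\lnot Px$ rather than via explosion. The double negation furnished by \WGMP/ is exactly what bridges the gap: one cannot extract an actual witness for $\existsx\lnot Px$ from $\lnot\forallx Px$ in minimal logic, but the doubly-negated existential suffices to clash with $\lnot\existsx\lnot Px$ and close the proof, which is why the weakened Markov principle---and not full \GMP/---is all that is required here.
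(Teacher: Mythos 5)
Your proposal is correct and follows essentially the same derivation as the paper's proof: apply \WGMP/ to the assumption $\lnot\forallx Px$, refute the resulting $\lnot\lnot\existsx\lnot Px$ by deriving $\lnot\existsx\lnot Px$ from $\forallx\lnot\lnot Px$ via $\exists$-elimination against the instantiated $\lnot\lnot Px$, and then discharge the two assumptions in the same order. Your remarks about the variable condition on $\exists$-elimination and the avoidance of \EFQ/ match the structure of the paper's deduction exactly.
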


\clearpage

\begin{proposition} \label{prop:dpalt}
$\DPi{Px}$ is equivalent to $\Texists_{y} \Tforall_{x} \left(P{y} \Tarrow P{x}\right)$
\begin{proof}
$(\implies)$
\begin{deduction}[nonfinal]
\AxiomC{$\Texists_{y} \left(P{y} \Tarrow \Tforall_{x} P{x}\right)$}
	\AxiomC{}
	\UnaryInfC{$P{y} \Tarrow \Tforall_{x} P{x}$}
		\AxiomC{}
		\UnaryInfC{$P{y}$}
	\RightLabel{\Tarrowelim}
	\BinaryInfC{$\Tforall_{x} P{x}$}
	\RightLabel{\Tunivelim}
	\UnaryInfC{$P{x}$}
	\RightLabel{\Tarrowintro}
	\UnaryInfC{$P{y} \Tarrow P{x}$}
	\RightLabel{\Tunivintro}
	\UnaryInfC{$\Tforall_{x} \left(P{y} \Tarrow P{x}\right)$}
	\RightLabel{\Texistintro}
	\UnaryInfC{$\Texists_{y} \Tforall_{x} \left(P{y} \Tarrow P{x}\right)$}
\RightLabel{\Texistelim}
\BinaryInfC{$\Texists_{y} \Tforall_{x} \left(P{y} \Tarrow P{x}\right)$}
\end{deduction}
\begin{deduction}
\AxiomC{$\Texists_{y} \Tforall_{x} \left(P{y} \Tarrow P{x}\right)$}
	\AxiomC{}
	\UnaryInfC{$\Tforall_{x} \left(P{y} \Tarrow P{x}\right)$}
	\RightLabel{\Tunivelim}
	\UnaryInfC{$P{y} \Tarrow P{x}$}
		\AxiomC{}
		\UnaryInfC{$P{y}$}
	\RightLabel{\Tarrowelim}
	\BinaryInfC{$P{x}$}
	\RightLabel{\Tunivintro}
	\UnaryInfC{$\Tforall_{x} P{x}$}
	\RightLabel{\Tarrowintro}
	\UnaryInfC{$P{y} \Tarrow \Tforall_{x} P{x}$}
	\RightLabel{\Texistintro}
	\UnaryInfC{$\Texists_{y} \left(P{y} \Tarrow \Tforall_{x} P{x}\right)$}
\RightLabel{\Texistelim}
\BinaryInfC{$\Texists_{y} \left(P{y} \Tarrow \Tforall_{x} P{x}\right)$}
\end{deduction}
\end{proof}
\end{proposition}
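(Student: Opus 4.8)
The plan is to establish the two entailments separately, each by a direct natural deduction argument that stays within minimal logic. No non-minimal rule (\EFQ/ or \DNE/) is needed, because the equivalence rests solely on the fact that $\mimp$ commutes with $\forall_x$ when $x$ does not occur free in the antecedent, and under the standing convention that $y$ is fresh, $x$ is indeed not free in $Py$.

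For the forward direction, $\exists_y(Py \mimp \forall_x Px) \vdash \exists_y \forall_x(Py \mimp Px)$, I would open with $\exists$-elimination, introducing a fresh eigenvariable $y$ together with the assumption $Py \mimp \forall_x Px$. Under the further assumption $Py$, one application of $\mimp$-elimination yields $\forall_x Px$, and then $\forall$-elimination gives $Px$. Discharging $Py$ by $\mimp$-introduction produces $Py \mimp Px$, after which $\forall$-introduction over $x$ followed by $\exists$-introduction over $y$ delivers the target, and the outer $\exists$-elimination is discharged. The reverse direction, $\exists_y \forall_x(Py \mimp Px) \vdash \exists_y(Py \mimp \forall_x Px)$, is entirely symmetric: from the assumption $\forall_x(Py \mimp Px)$ obtained by $\exists$-elimination, instantiate via $\forall$-elimination to get $Py \mimp Px$, assume $Py$, derive $Px$, then $\forall$-introduce to reach $\forall_x Px$, discharge $Py$ to obtain $Py \mimp \forall_x Px$, and re-quantify existentially.

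The only point requiring care, and hence the main (if modest) obstacle, is the eigenvariable and variable-freedom bookkeeping at the two applications of $\forall$-introduction. In each direction one must verify that $x$ is not free in any undischarged assumption at the moment $\forall_x$ is introduced. In the forward direction the only open assumption at that step is $Py \mimp \forall_x Px$, in which every occurrence of $x$ is bound; in the reverse direction it is $\forall_x(Py \mimp Px)$, again with no free $x$. Since $Py$ is discharged \emph{before} each $\forall$-introduction and involves only $y$, the side conditions are satisfied and both derivations go through.
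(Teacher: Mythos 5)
Your proof is correct and essentially identical to the paper's: both directions proceed by $\exists$-elimination, $\mimp$- and $\forall$-instantiation, and re-quantification in exactly the order you describe, entirely within minimal logic. One small inaccuracy in your side-condition check: in the reverse direction $Py$ is still \emph{undischarged} at the moment of $\forall$-introduction (both in your own described order and in the paper's derivation, where $\forall_x Px$ is formed before $Py$ is discharged), but this is harmless precisely because, as you note, $x$ is not free in $Py$.
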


\clearpage

\begin{proposition} \label{prop:healt}
$\HEi{Px}$ is equivalent to $\Texists_{y} \Tforall_{x} \left(P{x} \Tarrow P{y}\right)$
\begin{proof}
$(\implies)$
\begin{deduction}[nonfinal]
\AxiomC{$\Texists_{y} \left(\Texists_{x} P{x} \Tarrow P{y}\right)$}
	\AxiomC{}
	\UnaryInfC{$\Texists_{x} P{x} \Tarrow P{y}$}
		\AxiomC{}
		\UnaryInfC{$P{x}$}
		\RightLabel{\Texistintro}
		\UnaryInfC{$\Texists_{x} P{x}$}
	\RightLabel{\Tarrowelim}
	\BinaryInfC{$P{y}$}
	\RightLabel{\Tarrowintro}
	\UnaryInfC{$P{x} \Tarrow P{y}$}
	\RightLabel{\Tunivintro}
	\UnaryInfC{$\Tforall_{x} \left(P{x} \Tarrow P{y}\right)$}
	\RightLabel{\Texistintro}
	\UnaryInfC{$\Texists_{y} \Tforall_{x} \left(P{x} \Tarrow P{y}\right)$}
\RightLabel{\Texistelim}
\BinaryInfC{$\Texists_{y} \Tforall_{x} \left(P{x} \Tarrow P{y}\right)$}
\end{deduction}
\begin{deduction}
\AxiomC{$\Texists_{y} \Tforall_{x} \left(P{x} \Tarrow P{y}\right)$}
	\AxiomC{}
	\UnaryInfC{$\Texists_{x} P{x}$}
		\AxiomC{}
		\UnaryInfC{$\Tforall_{x} \left(P{x} \Tarrow P{y}\right)$}
		\RightLabel{\Tunivelim}
		\UnaryInfC{$P{x} \Tarrow P{y}$}
			\AxiomC{}
			\UnaryInfC{$P{x}$}
		\RightLabel{\Tarrowelim}
		\BinaryInfC{$P{y}$}
	\RightLabel{\Texistelim}
	\BinaryInfC{$P{y}$}
	\RightLabel{\Tarrowintro}
	\UnaryInfC{$\Texists_{x} P{x} \Tarrow P{y}$}
	\RightLabel{\Texistintro}
	\UnaryInfC{$\Texists_{y} \left(\Texists_{x} P{x} \Tarrow P{y}\right)$}
\RightLabel{\Texistelim}
\BinaryInfC{$\Texists_{y} \left(\Texists_{x} P{x} \Tarrow P{y}\right)$}
\end{deduction}
\end{proof}
\end{proposition}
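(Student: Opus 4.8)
The plan is to establish the equivalence by giving natural-deduction derivations in both directions, entirely within minimal logic. The whole proof turns on the interaction between $\existsx Px$ and a single instance $Px$ via the existential introduction and elimination rules, so no appeal to \EFQ/ or any classical principle is needed; indeed the argument is a mirror image of the one for the drinker paradox in Proposition \ref{prop:dpalt}.

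For the forward direction I would take $\HEi{Px} = \existsy(\existsx Px \mimp Py)$ as the premise and open it with existential elimination, obtaining a fresh witness variable $y$ together with the assumption $\existsx Px \mimp Py$. To reach $\forallx(Px \mimp Py)$ I would assume $Px$ for an arbitrary $x$, apply existential introduction to get $\existsx Px$, and feed this into $\existsx Px \mimp Py$ to obtain $Py$. Discharging $Px$ yields $Px \mimp Py$; universal introduction on $x$ then gives $\forallx(Px \mimp Py)$, and existential introduction on $y$ gives $\existsy\forallx(Px \mimp Py)$. Closing the existential elimination completes this direction.

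The backward direction runs symmetrically. Starting from $\existsy\forallx(Px \mimp Py)$, I would strip the outer existential to obtain a witness $y$ with $\forallx(Px \mimp Py)$ in hand. To build $\existsx Px \mimp Py$ I would assume $\existsx Px$ and open it by a second existential elimination to get $Px$ for a fresh $x$; universal elimination on the standing hypothesis supplies $Px \mimp Py$, and applying it to $Px$ gives $Py$. Discharging the assumption $\existsx Px$ produces $\existsx Px \mimp Py$, and existential introduction on $y$ followed by closing the outer elimination yields $\HEi{Px}$.

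The only delicate point---and the place where a careless derivation would break---is checking the eigenvariable side conditions. In the forward direction the universal introduction on $x$ is licit precisely because $x$ is not free in the open assumption $\existsx Px \mimp Py$ (its occurrences there being bound), and in the backward direction the inner existential elimination is licit because the eigenvariable $x$ does not appear free in the conclusion $Py$ nor in the standing hypothesis $\forallx(Px \mimp Py)$. Both conditions hold under the paper's convention on which variables may occur in $Px$, so the two derivations go through without modification.
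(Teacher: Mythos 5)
Your proposal is correct and follows essentially the same derivations as the paper: in each direction, open the premise by existential elimination, build the inner implication by the same introduction/elimination steps, and close with existential introduction before discharging the outer elimination, with the eigenvariable conditions holding exactly as you note. Nothing differs in substance from the paper's proof.
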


\clearpage

\begin{proposition} \label{prop:DNE,LEM,EFQ->DP}
$\DNEs, \LEMs, \EFQs \reduces \DPs$
\begin{proof}
First
\vspace{-\baselineskip}
\begin{prooftree}
\AxiomC{}
\RightLabel{DNE}
\UnaryInfC{$\Tneg{\Tneg{\Texists_{x} \Tneg{P{x}}}} \Tarrow \Texists_{x} \Tneg{P{x}}$}
	\AxiomC{$\Tneg{\Tforall_{x} P{x}}$}
		\AxiomC{}
		\RightLabel{DNE}
		\UnaryInfC{$\Tneg{\Tneg{P{x}}} \Tarrow P{x}$}
			\AxiomC{}
			\UnaryInfC{$\Tneg{\Texists_{x} \Tneg{P{x}}}$}
				\AxiomC{}
				\UnaryInfC{$\Tneg{P{x}}$}
				\RightLabel{\Texistintro}
				\UnaryInfC{$\Texists_{x} \Tneg{P{x}}$}
			\RightLabel{\Tarrowelim}
			\BinaryInfC{$\bot$}
			\RightLabel{\Tarrowintro}
			\UnaryInfC{$\Tneg{\Tneg{P{x}}}$}
		\RightLabel{\Tarrowelim}
		\BinaryInfC{$P{x}$}
		\RightLabel{\Tunivintro}
		\UnaryInfC{$\Tforall_{x} P{x}$}
	\RightLabel{\Tarrowelim}
	\BinaryInfC{$\bot$}
	\RightLabel{\Tarrowintro}
	\UnaryInfC{$\Tneg{\Tneg{\Texists_{x} \Tneg{P{x}}}}$}
\RightLabel{\Tarrowelim}
\BinaryInfC{$\Texists_{x} \Tneg{P{x}}$}
\end{prooftree}
\vspace{\baselineskip}Now,
\begin{deduction}
\AxiomC{}
\RightLabel{LEM}
\UnaryInfC{$\Tforall_{x} P{x} \Tor \Tneg{\Tforall_{x} P{x}}$}
	\AxiomC{}
	\UnaryInfC{$\Tforall_{x} P{x}$}
	\RightLabel{\Tarrowintro}
	\UnaryInfC{$P{y} \Tarrow \Tforall_{x} P{x}$}
	\RightLabel{\Texistintro}
	\UnaryInfC{$\Texists_{y} \left(P{y} \Tarrow \Tforall_{x} P{x}\right)$}
		\AxiomC{}
		\UnaryInfC{$\Tneg{\Tforall_{x} P{x}} \vdash_{DNE, LEM, EFQ} \Texists_{x} \Tneg{P{x}}$}
			\AxiomC{}
			\RightLabel{EFQ}
			\UnaryInfC{$\bot \Tarrow \Tforall_{x} P{x}$}
				\AxiomC{}
				\UnaryInfC{$\Tneg{P{x}}$}
					\AxiomC{}
					\UnaryInfC{$P{x}$}
				\RightLabel{\Tarrowelim}
				\BinaryInfC{$\bot$}
			\RightLabel{\Tarrowelim}
			\BinaryInfC{$\Tforall_{x} P{x}$}
			\RightLabel{\Tarrowintro}
			\UnaryInfC{$P{x} \Tarrow \Tforall_{x} P{x}$}
			\RightLabel{\Texistintro}
			\UnaryInfC{$\Texists_{y} \left(P{y} \Tarrow \Tforall_{x} P{x}\right)$}
		\RightLabel{\Texistelim}
		\BinaryInfC{$\Texists_{y} \left(P{y} \Tarrow \Tforall_{x} P{x}\right)$}
\RightLabel{\Tdisjelim}
\TrinaryInfC{$\Texists_{y} \left(P{y} \Tarrow \Tforall_{x} P{x}\right)$}
\end{deduction}
\end{proof}
\end{proposition}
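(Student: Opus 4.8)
The plan is to follow the classical ``last drinker'' reasoning directly. Apply $\LEMs$ to the sentence $\forallx Px$ and arrange the whole derivation as a disjunction elimination on $\forallx Px \mor \lnot\forallx Px$. In the branch where $\forallx Px$ holds, any patron serves as a witness: take an arbitrary term, discharge a vacuous assumption $Py$ to form $Py \mimp \forallx Px$, and apply $\exists$-introduction to obtain $\existsy(Py \mimp \forallx Px)$, which is $\DPi{Px}$. All the real work lies in the other branch.

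The branch starting from $\lnot\forallx Px$ must produce an actual non-drinking witness, and passing from the purely negative $\lnot\forallx Px$ to $\existsx\lnot Px$ is the main obstacle, since this step is unavailable constructively. I would isolate it as the subderivation $\lnot\forallx Px \derives \existsx\lnot Px$ (an instance of General Markov's Principle $\GMPi{Px}$) and prove it using only $\DNEs$. Towards $\existsx\lnot Px$ it suffices, by $\DNEs$, to derive $\lnot\lnot\existsx\lnot Px$; so assume $\lnot\existsx\lnot Px$. For an arbitrary $x$, assuming $\lnot Px$ gives $\existsx\lnot Px$ and hence $\bot$, so discharging yields $\lnot\lnot Px$, and a second application of $\DNEs$ gives $Px$. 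Universal generalisation then produces $\forallx Px$, contradicting the hypothesis; discharging $\lnot\existsx\lnot Px$ gives $\lnot\lnot\existsx\lnot Px$, and the first $\DNEs$ delivers the witness. The eigenvariable condition for the universal generalisation is met because the only open assumptions at that point, $\lnot\forallx Px$ and $\lnot\existsx\lnot Px$, are closed.

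With $\existsx\lnot Px$ established, I would close the branch by eliminating this existential: fixing a witness $t$ with $\lnot Pt$, assume $Pt$, derive $\bot$ from $Pt$ and $\lnot Pt$, and apply $\EFQs$ to reach $\forallx Px$; discharging $Pt$ gives $Pt \mimp \forallx Px$, and $\exists$-introduction again yields $\DPi{Px}$. Both cases of the $\LEMs$ disjunction therefore end in $\DPi{Px}$, so the outer disjunction elimination closes the argument. Thus $\LEMs$ supplies only the top-level case split, the two uses of $\DNEs$ sit inside the Markov subderivation, and the single $\EFQs$ fires the vacuously true implication — accounting for exactly the three schemata named in the statement.
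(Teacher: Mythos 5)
Your proposal is correct and follows essentially the same route as the paper's own proof: an outer case split by $\LEMs$ on $\forallx Px$, a Markov-style subderivation of $\existsx \lnot Px$ from $\lnot\forallx Px$ using two applications of $\DNEs$ (exactly the paper's first displayed derivation), and a single use of $\EFQs$ to turn the witness hypothesis $\lnot Pt$ into $Pt \mimp \forallx Px$ in the remaining branch. Your explicit check of the eigenvariable condition for the universal generalisation (noting that $\DNEs$ enters as a rule, not as an open assumption) is a detail the paper leaves implicit, but the argument is the same.
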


\vspace{20pt}

\begin{proposition} \label{prop:LEM->WLEM}
$\LEMs \reduces \WLEMs$
\begin{proof}
\begin{deduction}
\AxiomC{}
\RightLabel{LEM}
\UnaryInfC{$\Tneg{A} \Tor \Tneg{\Tneg{A}}$}
\end{deduction}
\end{proof}
\end{proposition}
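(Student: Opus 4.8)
The plan is to observe that $\WLEMi{A}$ is literally a particular instance of the $\LEMs$ scheme, so that the reduction requires no genuine proof work at all. Recall that $\WLEMi{A} := \lnot A \mor \lnot\lnot A$, whereas the scheme $\LEMs$ has instances $\LEMi{B} := B \mor \lnot B$ for every formula $B$. Substituting $B := \lnot A$ gives $\LEMi{\lnot A} = \lnot A \mor \lnot\lnot A$, which is syntactically identical to $\WLEMi{A}$.

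Consequently, the entire derivation of $\derives_{\LEMs} \WLEMi{A}$ consists of a single application of the $\LEMs$ rule, instantiated at the compound formula $\lnot A$ rather than at $A$ itself. First I would write down the axiom $\lnot A \mor \lnot\lnot A$ as an instance of $\LEMs$; then I would note that this formula \emph{is} $\WLEMi{A}$, and the derivation is complete. No implication introduction, disjunction elimination, or appeal to $\bot$ is needed, in contrast with the harder reductions elsewhere in the appendix (such as $\LEMs, \EFQs \reduces \DNEs$).

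There is essentially no obstacle here. The only point worth making explicit is that the relation $\reduces$ permits instantiating a scheme at an arbitrary formula when establishing reducibility, so invoking $\LEMs$ at the negated formula $\lnot A$ is legitimate; this is exactly the sense in which $\WLEMs$ is ``weaker'' than $\LEMs$, being recoverable simply by restricting attention to negated instances.

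\begin{proof}
\begin{deduction}
\AxiomC{}
\RightLabel{LEM}
\UnaryInfC{$\Tneg{A} \Tor \Tneg{\Tneg{A}}$}
\end{deduction}
\end{proof}
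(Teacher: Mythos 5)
Your proposal is correct and is exactly the paper's own proof: both consist of the single observation that $\WLEMi{A}$ is the instance $\LEMi{\Tneg{A}}$ of the \LEM/ scheme, discharged by one application of the LEM rule at the formula $\Tneg{A}$. Nothing further is needed.
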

\clearpage

\begin{proposition} \label{prop:GMP->WGMP}
$\GMPs \reduces \WGMPs$
\begin{proof}
\begin{deduction}
\AxiomC{}
\UnaryInfC{$\Tneg{\Texists_{x} \Tneg{P{x}}}$}
	\AxiomC{}
	\RightLabel{GMP}
	\UnaryInfC{$\Tneg{\Tforall_{x} P{x}} \Tarrow \Texists_{x} \Tneg{P{x}}$}
		\AxiomC{}
		\UnaryInfC{$\Tneg{\Tforall_{x} P{x}}$}
	\RightLabel{\Tarrowelim}
	\BinaryInfC{$\Texists_{x} \Tneg{P{x}}$}
\RightLabel{\Tarrowelim}
\BinaryInfC{$\bot$}
\RightLabel{\Tarrowintro}
\UnaryInfC{$\Tneg{\Tneg{\Texists_{x} \Tneg{P{x}}}}$}
\RightLabel{\Tarrowintro}
\UnaryInfC{$\Tneg{\Tforall_{x} P{x}} \Tarrow \Tneg{\Tneg{\Texists_{x} \Tneg{P{x}}}}$}
\end{deduction}
\end{proof}
\end{proposition}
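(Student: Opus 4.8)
The plan is to exploit that \WGMP/ is just \GMP/ with a double negation placed around its conclusion. Reading off the form used in Proposition~\ref{prop:DNSU->WGMP}, the scheme \WGMP/ is $\lnot\forallx Px \mimp \lnot\lnot\existsx\lnot Px$, whereas \GMP/ is $\lnot\forallx Px \mimp \existsx\lnot Px$. Since in minimal logic every formula $A$ satisfies $A \derives \lnot\lnot A$, feeding the conclusion of \GMP/ into this standard weakening should give the result almost immediately.

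Concretely, I would assemble the natural deduction proof as follows. Open the two assumptions $\lnot\forallx Px$ and $\lnot\existsx\lnot Px$. Invoke the \GMP/ rule to obtain $\lnot\forallx Px \mimp \existsx\lnot Px$ and apply $\mimp$-elimination against the first assumption to get $\existsx\lnot Px$. This contradicts the second assumption, yielding $\bot$ by a further $\mimp$-elimination. Discharging $\lnot\existsx\lnot Px$ produces $\lnot\lnot\existsx\lnot Px$, and then discharging $\lnot\forallx Px$ produces the target $\lnot\forallx Px \mimp \lnot\lnot\existsx\lnot Px$, that is, \WGMP/.

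I expect no genuine obstacle: this is among the easiest reductions in the hierarchy, since the step $\existsx\lnot Px \derives \lnot\lnot\existsx\lnot Px$ needs only minimal logic and no appeal to $\EFQs$. The one point worth emphasising is that the \GMP/ premise is used as a \emph{rule} (axiom schema) rather than as an open assumption, so it never needs discharging; the whole derivation therefore lives in minimal logic extended by \GMP/, exactly as the definition of $\reduces$ demands.
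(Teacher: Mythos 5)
Your proposal is correct and is essentially the paper's own proof: the paper likewise opens the assumptions $\lnot\forall_{x}Px$ and $\lnot\exists_{x}\lnot Px$, invokes the GMP rule and applies $\rightarrow$-elimination twice to reach $\bot$, then discharges $\lnot\exists_{x}\lnot Px$ first (giving $\lnot\lnot\exists_{x}\lnot Px$) and $\lnot\forall_{x}Px$ second, yielding exactly the target implication. There is no difference in approach worth noting.
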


\begin{proposition} \label{prop:DGP->WLEM}
$\DGPs \reduces \WLEMs$
\begin{proof}
\begin{deduction}
\AxiomC{}
\RightLabel{DGP}
\UnaryInfC{$\left(A \Tarrow \Tneg{A}\right) \Tor \left(\Tneg{A} \Tarrow A\right)$}
	\AxiomC{}
	\UnaryInfC{$A \Tarrow \Tneg{A}$}
		\AxiomC{}
		\UnaryInfC{$A$}
	\RightLabel{\Tarrowelim}
	\BinaryInfC{$\Tneg{A}$}
		\AxiomC{}
		\UnaryInfC{$A$}
	\RightLabel{\Tarrowelim}
	\BinaryInfC{$\bot$}
	\RightLabel{\Tarrowintro}
	\UnaryInfC{$\Tneg{A}$}
	\RightLabel{\Tdisjintro}
	\UnaryInfC{$\Tneg{A} \Tor \Tneg{\Tneg{A}}$}
		\AxiomC{}
		\UnaryInfC{$\Tneg{A}$}
			\AxiomC{}
			\UnaryInfC{$\Tneg{A} \Tarrow A$}
				\AxiomC{}
				\UnaryInfC{$\Tneg{A}$}
			\RightLabel{\Tarrowelim}
			\BinaryInfC{$A$}
		\RightLabel{\Tarrowelim}
		\BinaryInfC{$\bot$}
		\RightLabel{\Tarrowintro}
		\UnaryInfC{$\Tneg{\Tneg{A}}$}
		\RightLabel{\Tdisjintro}
		\UnaryInfC{$\Tneg{A} \Tor \Tneg{\Tneg{A}}$}
\RightLabel{\Tdisjelim}
\TrinaryInfC{$\Tneg{A} \Tor \Tneg{\Tneg{A}}$}
\end{deduction}
\end{proof}
\end{proposition}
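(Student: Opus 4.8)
The plan is to feed \WLEM/ a single, well-chosen instance of \DGP/. Recall that $\WLEMi{A}$ is $\lnot A \mor \lnot\lnot A$, and that $\lnot A$ abbreviates $A \mimp \bot$. The natural choice is to instantiate \DGP/ at the pair $(A, \lnot A)$, which yields the disjunction $(A \mimp \lnot A) \mor (\lnot A \mimp A)$. The whole derivation is then a single disjunction elimination on this formula, with each branch independently producing $\lnot A \mor \lnot\lnot A$.

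For the left branch I would assume $A \mimp \lnot A$. Assuming further $A$ and applying this implication gives $\lnot A$; feeding $A$ into $\lnot A$ produces $\bot$. Discharging the assumption $A$ closes off $\lnot A$, and a single $\mor$-introduction delivers $\lnot A \mor \lnot\lnot A$.

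For the right branch I would assume $\lnot A \mimp A$. Assuming $\lnot A$ and applying the implication gives $A$; feeding this $A$ back into the still-open assumption $\lnot A$ yields $\bot$. Discharging $\lnot A$ closes off $\lnot\lnot A$, and $\mor$-introduction again gives $\lnot A \mor \lnot\lnot A$. Combining the two branches by $\mor$-elimination on the \DGP/-instance completes the derivation.

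I do not expect any genuine obstacle here; the one point worth flagging is that the argument stays entirely within minimal logic and needs neither \EFQ/ nor \DNE/. This is because in each branch the conclusion reached from $\bot$ is itself a negation ($\lnot A$ on the left, $\lnot\lnot A$ on the right), so the $\bot$ is consumed by an $\mimp$-introduction rather than by explosion. Hence the reduction $\DGPs \reduces \WLEMs$ holds over bare minimal logic, as claimed.
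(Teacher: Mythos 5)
Your proposal is correct and matches the paper's own proof exactly: both instantiate \DGP/ at the pair $(A, \lnot A)$ and perform a single $\mor$-elimination, with the left branch discharging $A$ to obtain $\lnot A$ and the right branch discharging $\lnot A$ to obtain $\lnot\lnot A$, all within minimal logic. Your observation that the $\bot$ in each branch is consumed by $\mimp$-introduction rather than explosion is precisely why the paper can leave the derivation unlabelled by \EFQ/.
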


\begin{proposition} \label{prop:GLPOA->LEM}
$\GLPOAs \reduces \LEMs$
\begin{proof}
\begin{deduction}
\AxiomC{}
\RightLabel{GLPOA}
\UnaryInfC{$\Tforall_{x} A \Tor \Texists_{x} \Tneg{A}$}
	\AxiomC{}
	\UnaryInfC{$\Tforall_{x} A$}
	\RightLabel{\Tunivelim}
	\UnaryInfC{$A$}
	\RightLabel{\Tdisjintro}
	\UnaryInfC{$A \Tor \Tneg{A}$}
		\AxiomC{}
		\UnaryInfC{$\Texists_{x} \Tneg{A}$}
			\AxiomC{}
			\UnaryInfC{$\Tneg{A}$}
		\RightLabel{\Texistelim}
		\BinaryInfC{$\Tneg{A}$}
		\RightLabel{\Tdisjintro}
		\UnaryInfC{$A \Tor \Tneg{A}$}
\RightLabel{\Tdisjelim}
\TrinaryInfC{$A \Tor \Tneg{A}$}
\end{deduction}
\end{proof}
\end{proposition}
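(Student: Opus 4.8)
The plan is to mirror the derivation of $\GLPOs \reduces \LEMs$ (Proposition~\ref{prop:GLPO->LEM}), using the standard device of reading a propositional formula as a constant predicate. Given an arbitrary formula $A$, I would instantiate the scheme \GLPOA/ at the predicate $Px := A$ in which $x$ does not occur free. Unfolding the definition, this instance is the disjunction $\forallx A \mor \existsx \lnot A$, on which I perform a case analysis by $\mor$-elimination.

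For the left disjunct I assume $\forallx A$ and apply $\forall$-elimination to obtain $A$ (the substituted term is immaterial, since $x$ is not free in $A$), followed by $\mor$-introduction to reach $A \mor \lnot A$. For the right disjunct I assume $\existsx \lnot A$ and apply $\exists$-elimination with the open assumption $\lnot A$: because neither $A$ nor its negation contains a free occurrence of the witness variable, the eigenvariable side condition is met and I may conclude $\lnot A$ directly, whence $\mor$-introduction again yields $A \mor \lnot A$. A single $\mor$-elimination on the \GLPOA/ instance, discharging both case assumptions, then produces $A \mor \lnot A$, i.e.\ $\LEMi{A}$.

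The derivation never invokes \EFQ/ or \DNE/, so it lives entirely in minimal logic extended by \GLPOA/, as required for `$\reduces$'. There is essentially no obstacle here; the only point demanding attention is the $\exists$-elimination step, whose eigenvariable condition is exactly what forces us to treat $A$ as a genuine proposition with no free $x$. This is the existential counterpart of the $\forall$-introduction subtlety the paper has already highlighted in distinguishing \LEM/ as a rule from \LEM/ as an undischarged assumption, and it is the sole place where the variable-freedom bookkeeping matters.
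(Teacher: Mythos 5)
Your proposal is correct and coincides with the paper's own derivation: both instantiate \GLPOA/ at the constant predicate $A$ to obtain $\Tforall_{x} A \Tor \Texists_{x} \Tneg{A}$, then conclude $A \Tor \Tneg{A}$ in each case by $\Tunivelim$ (left) and $\Texistelim$ with the eigenvariable condition satisfied (right), followed by $\Tdisjelim$. Nothing further is needed; the argument stays within minimal logic extended by \GLPOA/, exactly as the paper's proof does.
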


\begin{proposition} \label{prop:GLPOA->GMP}
$\GLPOAs \reduces \GMPs$
\begin{proof}
\begin{deduction}
\AxiomC{}
\RightLabel{GLPOA}
\UnaryInfC{$\Tforall_{x} P{x} \Tor \Texists_{x} \Tneg{P{x}}$}
	\AxiomC{}
	\UnaryInfC{$\Tneg{\Tforall_{x} P{x}}$}
		\AxiomC{}
		\UnaryInfC{$\Tforall_{x} P{x}$}
	\RightLabel{\Tarrowelim}
	\BinaryInfC{$\bot$}
	\RightLabel{\Tarrowintro}
	\UnaryInfC{$\Tneg{P{x}}$}
	\RightLabel{\Texistintro}
	\UnaryInfC{$\Texists_{x} \Tneg{P{x}}$}
		\AxiomC{}
		\UnaryInfC{$\Texists_{x} \Tneg{P{x}}$}
\RightLabel{\Tdisjelim}
\TrinaryInfC{$\Texists_{x} \Tneg{P{x}}$}
\RightLabel{\Tarrowintro}
\UnaryInfC{$\Tneg{\Tforall_{x} P{x}} \Tarrow \Texists_{x} \Tneg{P{x}}$}
\end{deduction}
\end{proof}
\end{proposition}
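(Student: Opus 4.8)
By the definition of $\reduces$, the task is to exhibit, for an arbitrary predicate $P$, a minimal-logic derivation of the instance $\GMPi{Px} := \lnot\forallx Px \mimp \existsx \lnot Px$ from the corresponding instance $\GLPOAi{Px} := \forallx Px \mor \existsx \lnot Px$. The plan is to drive the proof by the shape of the goal: open an $\mimp$-introduction by assuming the antecedent $\lnot\forallx Px$, build $\existsx \lnot Px$ under that assumption, and discharge at the end.

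With $\lnot\forallx Px$ in scope, I would apply $\GLPOAi{Px}$ and perform a disjunction elimination on $\forallx Px \mor \existsx \lnot Px$. The right disjunct is literally the conclusion $\existsx \lnot Px$, so that branch closes immediately. The left disjunct hands back $\forallx Px$; combined with the standing assumption $\lnot\forallx Px$ this yields $\bot$ by $\mimp$-elimination. Once $\existsx \lnot Px$ is recovered in this branch as well, the two branches merge through the disjunction elimination to give $\existsx \lnot Px$, and a closing $\mimp$-introduction discharging $\lnot\forallx Px$ delivers $\GMPi{Px}$.

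The only subtle step — and the one I expect to be the crux — is obtaining $\existsx \lnot Px$ from $\bot$ in the left branch, since we are working over minimal logic and so cannot appeal to $\EFQ/$. The device I would use is that $\lnot Px$ is by definition the implication $Px \mimp \bot$: from the derived $\bot$ one applies $\mimp$-introduction with a vacuous discharge of $Px$ (discharging zero copies of the antecedent is legitimate), obtaining $\lnot Px$, and then $\existsx \lnot Px$ by $\exists$-introduction. This keeps the whole argument inside minimal logic, matching the statement, which asserts the reduction with no extra side principles.
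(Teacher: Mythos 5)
Your proposal is correct and follows exactly the same route as the paper's derivation: assume $\lnot\forall_x Px$, case-split on the GLPOA instance, close the right disjunct immediately, and in the left disjunct obtain $\bot$ and then $\lnot Px$ by an $\mimp$-introduction with vacuous discharge of $Px$, followed by $\exists$-introduction. The ``crux'' you identified — avoiding \EFQ/ by reading $\lnot Px$ as $Px \mimp \bot$ and discharging zero occurrences — is precisely the step the paper's deduction tree performs.
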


\clearpage

\begin{proposition} \label{prop:DP->CD}
$\DPs \reduces \CDs$
\begin{proof}
\begin{deduction}
\AxiomC{}
\RightLabel{DP}
\UnaryInfC{$\Texists_{y} \left(P{y} \Tarrow \Tforall_{x} P{x}\right)$}
	\AxiomC{}
	\UnaryInfC{$\Tforall_{x} \left(P{x} \Tor \Texists_{x} A\right)$}
	\RightLabel{\Tunivelim}
	\UnaryInfC{$P{y} \Tor \Texists_{x} A$}
		\AxiomC{}
		\UnaryInfC{$P{y} \Tarrow \Tforall_{x} P{x}$}
			\AxiomC{}
			\UnaryInfC{$P{y}$}
		\RightLabel{\Tarrowelim}
		\BinaryInfC{$\Tforall_{x} P{x}$}
		\RightLabel{\Tdisjintro}
		\UnaryInfC{$\Tforall_{x} P{x} \Tor \Texists_{x} A$}
			\AxiomC{}
			\UnaryInfC{$\Texists_{x} A$}
			\RightLabel{\Tdisjintro}
			\UnaryInfC{$\Tforall_{x} P{x} \Tor \Texists_{x} A$}
	\RightLabel{\Tdisjelim}
	\TrinaryInfC{$\Tforall_{x} P{x} \Tor \Texists_{x} A$}
\RightLabel{\Texistelim}
\BinaryInfC{$\Tforall_{x} P{x} \Tor \Texists_{x} A$}
\RightLabel{\Tarrowintro}
\UnaryInfC{$\Tforall_{x} \left(P{x} \Tor \Texists_{x} A\right) \Tarrow \left(\Tforall_{x} P{x} \Tor \Texists_{x} A\right)$}
\end{deduction}
\end{proof}
\end{proposition}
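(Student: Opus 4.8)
The plan is to apply \DP/ to the very predicate $P$ occurring in the target instance of \CD/ and to exploit the disjunctive shape of \CD/'s antecedent. Working over plain minimal logic, I would open the derivation by assuming the antecedent $\forallx(Px \mor \existsx Q)$, aiming to produce the succedent $\forallx Px \mor \existsx Q$; a single implication introduction at the end then discharges this assumption and delivers $\CDi{Px, Q}$ outright.

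First I would invoke the \DP/ instance $\DPi{Px} = \existsy(Py \mimp \forallx Px)$ and set up an existential elimination, introducing a fresh eigenvariable $y$ together with the open assumption $Py \mimp \forallx Px$. Next I would instantiate the universally quantified antecedent at this same $y$: applying universal elimination to $\forallx(Px \mor \existsx Q)$ yields $Py \mor \existsx Q$. The heart of the argument is then a case split by disjunction elimination on this formula. In the left case, assuming $Py$, modus ponens ($\mimp$-elimination) against the \DP/ witness gives $\forallx Px$, whence disjunction introduction yields $\forallx Px \mor \existsx Q$; in the right case, $\existsx Q$ already delivers the same disjunction by disjunction introduction on the other side. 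Both branches therefore close with $\forallx Px \mor \existsx Q$, so the case split is discharged.

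The conclusion $\forallx Px \mor \existsx Q$ produced by the case analysis does not mention $y$, so I can now legitimately apply existential elimination to discharge the \DP/ witness and retire the eigenvariable. A final implication introduction discharges the assumed antecedent, giving the required instance of \CD/.

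I expect the only point needing genuine care to be the eigenvariable bookkeeping around the existential elimination for \DP/: one must confirm that $y$ is truly fresh and occurs free neither in the end formula $\forallx Px \mor \existsx Q$ nor in the surviving open assumption $\forallx(Px \mor \existsx Q)$, so that the side condition on $\exists$-elimination is satisfied (this is automatic since $y$ is chosen fresh and does not appear in $P$ or $Q$). It is worth emphasising that — in contrast to the derivation of \DGP/ from \DP/, which needs \EFQ/ and \TT/ — no such auxiliary principles are required here; the entire argument goes through in minimal logic, which is exactly why the corresponding arrow in the hierarchy carries no label.
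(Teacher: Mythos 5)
Your proposal is correct and follows essentially the same route as the paper's own derivation: instantiate \DP/ at the predicate $P$ itself, eliminate the existential with a fresh witness $y$, specialise the assumed antecedent to $Py \mor \existsx Q$, case-split, and close with $\exists$-elimination followed by $\mimp$-introduction. Your remark about the eigenvariable side condition is exactly the point the paper's natural deduction tree handles implicitly, so there is nothing to add.
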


\begin{proposition} \label{prop:DP->GMP}
$\DPs \reduces \GMPs$
\begin{proof}
\begin{deduction}
\AxiomC{}
\RightLabel{DP}
\UnaryInfC{$\Texists_{y} \left(P{y} \Tarrow \Tforall_{x} P{x}\right)$}
	\AxiomC{}
	\UnaryInfC{$\Tneg{\Tforall_{x} P{x}}$}
		\AxiomC{}
		\UnaryInfC{$P{y} \Tarrow \Tforall_{x} P{x}$}
			\AxiomC{}
			\UnaryInfC{$P{y}$}
		\RightLabel{\Tarrowelim}
		\BinaryInfC{$\Tforall_{x} P{x}$}
	\RightLabel{\Tarrowelim}
	\BinaryInfC{$\bot$}
	\RightLabel{\Tarrowintro}
	\UnaryInfC{$\Tneg{P{y}}$}
	\RightLabel{\Texistintro}
	\UnaryInfC{$\Texists_{x} \Tneg{P{x}}$}
\RightLabel{\Texistelim}
\BinaryInfC{$\Texists_{x} \Tneg{P{x}}$}
\RightLabel{\Tarrowintro}
\UnaryInfC{$\Tneg{\Tforall_{x} P{x}} \Tarrow \Texists_{x} \Tneg{P{x}}$}
\end{deduction}
\end{proof}
\end{proposition}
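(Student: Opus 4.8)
The plan is to derive a single instance of \GMP/ directly from the corresponding instance of \DP/, working over plain minimal logic, so that the resulting arrow in the hierarchy needs no \EFQ/ label. Recall that $\DPi{Px}$ asserts $\existsy(Py \mimp \forallx Px)$, while the goal $\GMPi{Px}$ is $\lnot\forallx Px \mimp \existsx\lnot Px$. The guiding intuition is that the ``drinker'' $y$ supplied by \DP/ is exactly a witness to $\existsx\lnot Px$ once we know that not everyone drinks: if $y$ were drinking, then everyone would be, contradicting $\lnot\forallx Px$, so $y$ cannot be drinking.

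I would open the proof by assuming $\lnot\forallx Px$, an assumption to be discharged at the very end by $\mimp$-introduction in order to produce the implication in $\GMPi{Px}$. Next I would take the instance $\existsy(Py \mimp \forallx Px)$ of \DP/ and set up an $\exists$-elimination on it, introducing a fresh witness $y$ together with the temporary premise $Py \mimp \forallx Px$. Inside the elimination, the key step is to establish $\lnot Py$: assume $Py$, feed it to $Py \mimp \forallx Px$ to obtain $\forallx Px$, and combine this with the standing assumption $\lnot\forallx Px$ to reach $\bot$; discharging $Py$ gives $\lnot Py$. A single $\exists$-introduction then yields $\existsx\lnot Px$, which is precisely the formula I want to transport out of the $\exists$-elimination.

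The only point requiring care is the eigenvariable (variable-freedom) condition on the $\exists$-elimination: the still-open assumption $\lnot\forallx Px$ and the conclusion $\existsx\lnot Px$ must not contain $y$ free, which holds here since $y$ occurs only in the discharged premise $Py \mimp \forallx Px$ and in the local derivation of $\lnot Py$. After closing the $\exists$-elimination to obtain $\existsx\lnot Px$, I would discharge $\lnot\forallx Px$ by $\mimp$-introduction, giving $\lnot\forallx Px \mimp \existsx\lnot Px$, which is exactly $\GMPi{Px}$. No genuine obstacle arises; the derivation is short and stays entirely within minimal logic, which is what makes the arrow from \DP/ to \GMP/ in the hierarchy unlabelled.
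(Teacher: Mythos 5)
Your proposal is correct and is essentially identical to the paper's own derivation: assume $\lnot\forall_x Px$, eliminate the existential from $\DPi{Px}$ with witness $y$, derive $\lnot Py$ by feeding an assumed $Py$ into $Py \mimp \forall_x Px$ and contradicting $\lnot\forall_x Px$, then $\exists$-introduce, close the $\exists$-elimination, and discharge. Your remark about the eigenvariable condition is also exactly what makes the paper's deduction well-formed, so nothing is missing.
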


\begin{proposition} \label{prop:HE->DNSE}
$\HEs \reduces \DNSEs$
\begin{proof}
\begin{deduction}
\AxiomC{}
\RightLabel{HE}
\UnaryInfC{$\Texists_{y} \left(\Texists_{x} P{x} \Tarrow P{y}\right)$}
	\AxiomC{}
	\UnaryInfC{$\Tneg{\Tneg{\Texists_{x} P{x}}}$}
		\AxiomC{}
		\UnaryInfC{$\Tneg{P{y}}$}
			\AxiomC{}
			\UnaryInfC{$\Texists_{x} P{x} \Tarrow P{y}$}
				\AxiomC{}
				\UnaryInfC{$\Texists_{x} P{x}$}
			\RightLabel{\Tarrowelim}
			\BinaryInfC{$P{y}$}
		\RightLabel{\Tarrowelim}
		\BinaryInfC{$\bot$}
		\RightLabel{\Tarrowintro}
		\UnaryInfC{$\Tneg{\Texists_{x} P{x}}$}
	\RightLabel{\Tarrowelim}
	\BinaryInfC{$\bot$}
	\RightLabel{\Tarrowintro}
	\UnaryInfC{$\Tneg{\Tneg{P{y}}}$}
	\RightLabel{\Texistintro}
	\UnaryInfC{$\Texists_{x} \Tneg{\Tneg{P{x}}}$}
\RightLabel{\Texistelim}
\BinaryInfC{$\Texists_{x} \Tneg{\Tneg{P{x}}}$}
\RightLabel{\Tarrowintro}
\UnaryInfC{$\Tneg{\Tneg{\Texists_{x} P{x}}} \Tarrow \Texists_{x} \Tneg{\Tneg{P{x}}}$}
\end{deduction}
\end{proof}
\end{proposition}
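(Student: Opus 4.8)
The goal is to establish $\derives_{\HEs} \DNSEi{Px}$ for an arbitrary predicate $P$; that is, to derive $\lnot\lnot\existsx Px \mimp \existsx \lnot\lnot Px$ in minimal logic with access to instances of $\HE$. The plan is to assume the antecedent $\lnot\lnot\existsx Px$ as an open assumption, derive $\existsx \lnot\lnot Px$, and close with a single $\mimp$-introduction at the very end.

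First I would take the instance $\existsy(\existsx Px \mimp Py)$ supplied by $\HE$ and set up an $\exists$-elimination on it, introducing an eigenvariable $y$ together with the open assumption $\existsx Px \mimp Py$. The conclusion of this elimination will be $\existsx \lnot\lnot Px$, reached by first proving $\lnot\lnot Py$ for the witness $y$ and then applying $\exists$-introduction with the bound $x$ instantiated to $y$.

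The heart of the derivation is the subproof of $\lnot\lnot Py$. To obtain it, I would assume $\lnot Py$ and chain it with the witness assumption: from $\existsx Px \mimp Py$ and $Py \mimp \bot$ one reaches $\existsx Px \mimp \bot$, i.e.\ $\lnot\existsx Px$, using only $\mimp$-elimination (to produce $Py$ from a freshly assumed $\existsx Px$, then $\bot$) followed by $\mimp$-introduction. Clashing this $\lnot\existsx Px$ against the standing assumption $\lnot\lnot\existsx Px$ then yields $\bot$, and discharging $\lnot Py$ gives $\lnot\lnot Py$.

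It then remains to close the $\exists$-elimination and discharge $\lnot\lnot\existsx Px$ to obtain $\DNSEi{Px}$. The only points needing care are bookkeeping ones: checking that the eigenvariable condition for the $\exists$-elimination is met (the variable $y$ is free neither in the conclusion $\existsx \lnot\lnot Px$ nor in the surviving open assumption $\lnot\lnot\existsx Px$), and observing that the contraposition producing $\lnot\existsx Px$ from $\existsx Px \mimp Py$ and $\lnot Py$ is already available in minimal logic. This last observation is precisely why the reduction succeeds without \EFQ/, so the derivation establishes the stronger minimal-logic claim rather than merely an intuitionistic one.
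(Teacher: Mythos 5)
Your proposal is correct and takes essentially the same route as the paper's own proof: the same $\exists$-elimination on the \HE/ instance, the same derivation of $\lnot\lnot Py$ by assuming $\lnot Py$, contraposing the witness implication $\existsx Px \mimp Py$ to get $\lnot \existsx Px$, and clashing it with the standing assumption $\lnot\lnot\existsx Px$, followed by the same $\exists$-introduction, $\exists$-elimination, and final discharge. Your added remarks on the eigenvariable condition and on the argument needing no \EFQ/ are accurate but simply make explicit what the paper's deduction tree leaves implicit.
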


\clearpage

\begin{proposition} \label{prop:GLPO->DNSE}
$\GLPOs \reduces \DNSEs$
\begin{proof}
\begin{deduction}
\AxiomC{}
\RightLabel{GLPO}
\UnaryInfC{$\Tforall_{x} \Tneg{P{x}} \Tor \Texists_{x} P{x}$}
	\AxiomC{}
	\UnaryInfC{$\Tneg{\Tneg{\Texists_{x} P{x}}}$}
		\AxiomC{}
		\UnaryInfC{$\Texists_{x} P{x}$}
			\AxiomC{}
			\UnaryInfC{$\Tforall_{x} \Tneg{P{x}}$}
			\RightLabel{\Tunivelim}
			\UnaryInfC{$\Tneg{P{x}}$}
				\AxiomC{}
				\UnaryInfC{$P{x}$}
			\RightLabel{\Tarrowelim}
			\BinaryInfC{$\bot$}
		\RightLabel{\Texistelim}
		\BinaryInfC{$\bot$}
		\RightLabel{\Tarrowintro}
		\UnaryInfC{$\Tneg{\Texists_{x} P{x}}$}
	\RightLabel{\Tarrowelim}
	\BinaryInfC{$\bot$}
	\RightLabel{\Tarrowintro}
	\UnaryInfC{$\Tneg{\Tneg{P{x}}}$}
	\RightLabel{\Texistintro}
	\UnaryInfC{$\Texists_{x} \Tneg{\Tneg{P{x}}}$}
		\AxiomC{}
		\UnaryInfC{$\Texists_{x} P{x}$}
			\AxiomC{}
			\UnaryInfC{$\Tneg{P{x}}$}
				\AxiomC{}
				\UnaryInfC{$P{x}$}
			\RightLabel{\Tarrowelim}
			\BinaryInfC{$\bot$}
			\RightLabel{\Tarrowintro}
			\UnaryInfC{$\Tneg{\Tneg{P{x}}}$}
			\RightLabel{\Texistintro}
			\UnaryInfC{$\Texists_{x} \Tneg{\Tneg{P{x}}}$}
		\RightLabel{\Texistelim}
		\BinaryInfC{$\Texists_{x} \Tneg{\Tneg{P{x}}}$}
\RightLabel{\Tdisjelim}
\TrinaryInfC{$\Texists_{x} \Tneg{\Tneg{P{x}}}$}
\RightLabel{\Tarrowintro}
\UnaryInfC{$\Tneg{\Tneg{\Texists_{x} P{x}}} \Tarrow \Texists_{x} \Tneg{\Tneg{P{x}}}$}
\end{deduction}
\end{proof}
\end{proposition}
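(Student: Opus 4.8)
The goal is to establish $\derives_{\GLPOs} \DNSEi{Px}$, i.e.\ to derive $\lnot\lnot\existsx Px \mimp \existsx \lnot\lnot Px$ over minimal logic extended by \GLPO/. The plan is to assume the antecedent $\lnot\lnot\existsx Px$, derive the consequent $\existsx \lnot\lnot Px$, and then discharge with $\mimp$-introduction. To produce the consequent, I would appeal to the instance $\GLPOi{Px}$, that is $\forallx\lnot Px \mor \existsx Px$, and argue by cases (disjunction elimination), reaching $\existsx\lnot\lnot Px$ in each branch.

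The right disjunct is routine. From $\existsx Px$, an $\exists$-elimination supplies a term with $Px$; since $Px \derives \lnot\lnot Px$ already in minimal logic (assume $\lnot Px$, apply it to $Px$ for $\bot$, then discharge), a single $\exists$-introduction gives $\existsx\lnot\lnot Px$.

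The left disjunct is where the work lies. From $\forallx\lnot Px$ I would first obtain $\lnot\existsx Px$: assume $\existsx Px$, extract a witness $Px$, instantiate $\forallx\lnot Px$ to $\lnot Px$, derive $\bot$, and discharge the assumption. Combining $\lnot\existsx Px$ with the standing hypothesis $\lnot\lnot\existsx Px$ then yields $\bot$.

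The hard part is that minimal logic lacks \emph{ex falso quodlibet}, so this $\bot$ cannot be turned directly into the consequent. The key observation is that the matrix under the target existential is itself a negation, and from $\bot$ one can always infer $\lnot\lnot Px$, which unfolds to $\lnot Px \mimp \bot$, by a single (vacuous) $\mimp$-introduction, an inference that is perfectly legitimate in pure minimal logic. An $\exists$-introduction then closes this branch with $\existsx\lnot\lnot Px$, after which disjunction elimination merges the two cases and a final $\mimp$-introduction discharges $\lnot\lnot\existsx Px$ to deliver \DNSE/.
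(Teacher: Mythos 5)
Your proposal is correct and matches the paper's own derivation essentially step for step: case split on $\GLPOi{Px}$, the routine right branch via $Px \derives \lnot\lnot Px$, and in the left branch deriving $\bot$ from $\lnot\exists_x Px$ and the antecedent, then exploiting that the matrix $\lnot\lnot Px$ is a negation so that a vacuous $\mimp$-introduction yields it from $\bot$ without \EFQ/. The only difference is the bookkeeping order of assuming $\lnot\lnot\exists_x Px$ versus invoking \GLPO/ first, which is immaterial.
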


\begin{proposition} \label{prop:GMP->DNSE}
$\GMPs \reduces \DNSEs$
\begin{proof}
\begin{deduction}
\AxiomC{}
\RightLabel{GMP}
\UnaryInfC{$\Tneg{\Tforall_{x} \Tneg{P{x}}} \Tarrow \Texists_{x} \Tneg{\Tneg{P{x}}}$}
	\AxiomC{}
	\UnaryInfC{$\Tneg{\Tneg{\Texists_{x} P{x}}}$}
		\AxiomC{}
		\UnaryInfC{$\Texists_{x} P{x}$}
			\AxiomC{}
			\UnaryInfC{$\Tforall_{x} \Tneg{P{x}}$}
			\RightLabel{\Tunivelim}
			\UnaryInfC{$\Tneg{P{x}}$}
				\AxiomC{}
				\UnaryInfC{$P{x}$}
			\RightLabel{\Tarrowelim}
			\BinaryInfC{$\bot$}
		\RightLabel{\Texistelim}
		\BinaryInfC{$\bot$}
		\RightLabel{\Tarrowintro}
		\UnaryInfC{$\Tneg{\Texists_{x} P{x}}$}
	\RightLabel{\Tarrowelim}
	\BinaryInfC{$\bot$}
	\RightLabel{\Tarrowintro}
	\UnaryInfC{$\Tneg{\Tforall_{x} \Tneg{P{x}}}$}
\RightLabel{\Tarrowelim}
\BinaryInfC{$\Texists_{x} \Tneg{\Tneg{P{x}}}$}
\RightLabel{\Tarrowintro}
\UnaryInfC{$\Tneg{\Tneg{\Texists_{x} P{x}}} \Tarrow \Texists_{x} \Tneg{\Tneg{P{x}}}$}
\end{deduction}
\end{proof}
\end{proposition}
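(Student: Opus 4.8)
The goal is to establish $\GMPs \reduces \DNSEs$, that is, to derive $\DNSEi{Px} = \lnot\lnot\existsx Px \mimp \existsx\lnot\lnot Px$ from General Markov's Principle over minimal logic, for an arbitrary predicate $P$. The plan is to apply \GMP/ not to $P$ itself but to the predicate $\lnot P$. The resulting instance $\GMPi{\lnot Px}$ reads
\[
	\lnot\forallx\lnot Px \mimp \existsx\lnot\lnot Px,
\]
whose consequent $\existsx\lnot\lnot Px$ is already precisely the consequent demanded by \DNSE/. So the entire problem reduces to bridging the antecedents: it suffices to derive the Markov antecedent $\lnot\forallx\lnot Px$ from the \DNSE/ hypothesis $\lnot\lnot\existsx Px$, and this bridge should live entirely in minimal logic.

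First I would assume $\lnot\lnot\existsx Px$ as the outer hypothesis. To reach $\lnot\forallx\lnot Px$, I would assume $\forallx\lnot Px$ and aim for $\bot$. The key subderivation produces $\lnot\existsx Px$ from $\forallx\lnot Px$: assume $\existsx Px$, use $\exists$-elimination to pass to a term satisfying $Px$, instantiate $\forallx\lnot Px$ at that term to obtain the contradictory $\lnot Px$, collapse to $\bot$, and discharge $\existsx Px$ to conclude $\lnot\existsx Px$. Playing the outer hypothesis $\lnot\lnot\existsx Px$ against this $\lnot\existsx Px$ yields $\bot$, and discharging $\forallx\lnot Px$ gives exactly $\lnot\forallx\lnot Px$.

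With $\lnot\forallx\lnot Px$ in hand, firing the instance $\GMPi{\lnot Px}$ delivers $\existsx\lnot\lnot Px$. Discharging the outer hypothesis $\lnot\lnot\existsx Px$ then completes the implication $\lnot\lnot\existsx Px \mimp \existsx\lnot\lnot Px$, which is $\DNSEi{Px}$, as required.

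I do not expect a genuine obstacle: apart from the single \GMP/ step, every inference is a standard minimal manipulation. The one point that requires a moment's thought is the choice of instance. One must notice that feeding $\lnot P$ into \GMP/ converts the single-negation Markov conclusion into the double-negation shape $\existsx\lnot\lnot Px$ that \DNSE/ needs, and, dually, that $\lnot\lnot\existsx Px$ is already strong enough minimally to furnish the Markov antecedent $\lnot\forallx\lnot Px$. (Alternatively to the direct subderivation above, one may note that $\existsx Px \mimp \lnot\forallx\lnot Px$ is minimal, lift it monotonically under double negation, and then apply triple-negation elimination to recover the same implication from $\lnot\lnot\existsx Px$.)
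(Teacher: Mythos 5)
Your proposal is correct and matches the paper's own proof essentially step for step: you instantiate \GMP/ at $\lnot Px$ to obtain $\lnot\forall_x \lnot Px \mimp \exists_x \lnot\lnot Px$, and bridge the antecedent by deriving $\lnot\exists_x Px$ from $\forall_x \lnot Px$ and playing it against the hypothesis $\lnot\lnot\exists_x Px$, exactly as in the paper's natural deduction tree. No gaps; the key idea (feeding $\lnot P$ into \GMP/) is the same one the paper uses.
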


\begin{proposition} \label{prop:GLPOA->WGMP}
$\GLPOAs \reduces \WGMPs$
\begin{proof}
\begin{deduction}
\AxiomC{}
\RightLabel{GLPOA}
\UnaryInfC{$\Tforall_{x} P{x} \Tor \Texists_{x} \Tneg{P{x}}$}
	\AxiomC{}
	\UnaryInfC{$\Tneg{\Tforall_{x} P{x}}$}
		\AxiomC{}
		\UnaryInfC{$\Tforall_{x} P{x}$}
	\RightLabel{\Tarrowelim}
	\BinaryInfC{$\bot$}
	\RightLabel{\Tarrowintro}
	\UnaryInfC{$\Tneg{\Tneg{\Texists_{x} \Tneg{P{x}}}}$}
	\RightLabel{\Tarrowintro}
	\UnaryInfC{$\Tneg{\Tforall_{x} P{x}} \Tarrow \Tneg{\Tneg{\Texists_{x} \Tneg{P{x}}}}$}
		\AxiomC{}
		\UnaryInfC{$\Tneg{\Texists_{x} \Tneg{P{x}}}$}
			\AxiomC{}
			\UnaryInfC{$\Texists_{x} \Tneg{P{x}}$}
		\RightLabel{\Tarrowelim}
		\BinaryInfC{$\bot$}
		\RightLabel{\Tarrowintro}
		\UnaryInfC{$\Tneg{\Tneg{\Texists_{x} \Tneg{P{x}}}}$}
		\RightLabel{\Tarrowintro}
		\UnaryInfC{$\Tneg{\Tforall_{x} P{x}} \Tarrow \Tneg{\Tneg{\Texists_{x} \Tneg{P{x}}}}$}
\RightLabel{\Tdisjelim}
\TrinaryInfC{$\Tneg{\Tforall_{x} P{x}} \Tarrow \Tneg{\Tneg{\Texists_{x} \Tneg{P{x}}}}$}
\end{deduction}
\end{proof}
\end{proposition}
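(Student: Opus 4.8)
The plan is to unfold $\WGMP/$ to the explicit form $\lnot\forallx Px \mimp \lnot\lnot\existsx\lnot Px$ in which it appears in Propositions~\ref{prop:DNSU->WGMP} and~\ref{prop:WGMP->DNSU}, and to show that each such instance follows from the corresponding instance $\forallx Px \mor \existsx\lnot Px$ of $\GLPOA/$ over minimal logic. Since the conclusion is a double negation---ultimately a derivation of $\bot$ from two negative hypotheses---I would first assume both $\lnot\forallx Px$ and $\lnot\existsx\lnot Px$ and aim to derive $\bot$.

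First I would invoke $\GLPOA/$ to obtain the disjunction $\forallx Px \mor \existsx\lnot Px$, and then eliminate on it. In the left branch, the disjunct $\forallx Px$ contradicts the hypothesis $\lnot\forallx Px$, giving $\bot$ by $\mimp$-elimination (recall $\lnot\forallx Px$ is $\forallx Px \mimp \bot$). In the right branch, the disjunct $\existsx\lnot Px$ contradicts $\lnot\existsx\lnot Px$, again giving $\bot$. Both branches thus yield $\bot$, so disjunction elimination delivers $\bot$ outright.

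With $\bot$ in hand, I would discharge the assumption $\lnot\existsx\lnot Px$ by $\mimp$-introduction to obtain $\lnot\lnot\existsx\lnot Px$, and then discharge $\lnot\forallx Px$ to conclude the required implication $\lnot\forallx Px \mimp \lnot\lnot\existsx\lnot Px$. In the formal natural-deduction tree it is cleanest to place the disjunction elimination outermost, proving the full implication in \emph{each} of the two branches and vacuously discharging whichever of the two negative assumptions happens not to be used in that branch; this keeps the two subderivations symmetric.

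The point to watch---and the reason this goes through in \emph{minimal} rather than merely intuitionistic logic---is that $\EFQ/$ is never needed: in neither branch do we have to turn $\bot$ into some other formula, because each branch already terminates in $\bot$ and the overall target is itself a negation (an implication into $\bot$). Hence no explosion step appears, and the derivation is valid in pure minimal logic. There is no substantive obstacle beyond the bookkeeping of which assumptions are discharged where.
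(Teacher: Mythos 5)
Your proposal is correct and takes essentially the same route as the paper: case analysis on the \GLPOA/ disjunction, where each disjunct contradicts one of the two negative hypotheses, and the target implication is assembled by (partly vacuous) discharges---indeed your ``cleanest'' formulation, with the disjunction elimination outermost and the full implication $\lnot\forallx Px \mimp \lnot\lnot\existsx \lnot Px$ proved in each branch, is precisely the paper's derivation. You are also right on the key point that no \EFQ/ step is ever required, which is why the reduction goes through in minimal logic.
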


\clearpage

\begin{proposition} \label{prop:DP,EFQ,DZ,DO,DX->DGP}
$\DPs, \EFQs, \TT/ \reduces \DGPs$
\begin{proof}
Where $\Phi = \left(\left(D{y} \Tarrow A\right) \Tand \left(\Tneg{D{y}} \Tarrow
B\right)\right) \Tarrow \Tforall_{x} \left(\left(D{x} \Tarrow A\right) \Tand
\left(\Tneg{D{x}} \Tarrow B\right)\right)$, \\
Lemma 1:
\vspace{-\baselineskip}
\begin{prooftree}
\AxiomC{$\Phi$}
	\AxiomC{}
	\UnaryInfC{$A$}
	\RightLabel{\Tarrowintro}
	\UnaryInfC{$D{y} \Tarrow A$}
		\AxiomC{}
		\RightLabel{EFQ}
		\UnaryInfC{$\bot \Tarrow B$}
			\AxiomC{}
			\UnaryInfC{$\Tneg{D{y}}$}
				\AxiomC{$D{y}$}
			\RightLabel{\Tarrowelim}
			\BinaryInfC{$\bot$}
		\RightLabel{\Tarrowelim}
		\BinaryInfC{$B$}
		\RightLabel{\Tarrowintro}
		\UnaryInfC{$\Tneg{D{y}} \Tarrow B$}
	\RightLabel{\Tconjintro}
	\BinaryInfC{$\left(D{y} \Tarrow A\right) \Tand \left(\Tneg{D{y}} \Tarrow B\right)$}
\RightLabel{\Tarrowelim}
\BinaryInfC{$\Tforall_{x} \left(\left(D{x} \Tarrow A\right) \Tand \left(\Tneg{D{x}} \Tarrow B\right)\right)$}
\RightLabel{\Tunivelim}
\UnaryInfC{$\left(D{1} \Tarrow A\right) \Tand \left(\Tneg{D{1}} \Tarrow B\right)$}
	\AxiomC{}
	\UnaryInfC{$\Tneg{D{1}} \Tarrow B$}
		\AxiomC{}
		\RightLabel{DO}
		\UnaryInfC{$\Tneg{D{1}}$}
	\RightLabel{\Tarrowelim}
	\BinaryInfC{$B$}
\RightLabel{\Tconjelim}
\BinaryInfC{$B$}
\RightLabel{\Tarrowintro}
\UnaryInfC{$A \Tarrow B$}
\RightLabel{\Tdisjintro}
\UnaryInfC{$\left(A \Tarrow B\right) \Tor \left(B \Tarrow A\right)$}
\end{prooftree}
\vspace{\baselineskip}Lemma 2:
\vspace{-\baselineskip}
\begin{prooftree}
\AxiomC{$\Phi$}
	\AxiomC{}
	\RightLabel{EFQ}
	\UnaryInfC{$\bot \Tarrow A$}
		\AxiomC{$\Tneg{D{y}}$}
			\AxiomC{}
			\UnaryInfC{$D{y}$}
		\RightLabel{\Tarrowelim}
		\BinaryInfC{$\bot$}
	\RightLabel{\Tarrowelim}
	\BinaryInfC{$A$}
	\RightLabel{\Tarrowintro}
	\UnaryInfC{$D{y} \Tarrow A$}
		\AxiomC{}
		\UnaryInfC{$B$}
		\RightLabel{\Tarrowintro}
		\UnaryInfC{$\Tneg{D{y}} \Tarrow B$}
	\RightLabel{\Tconjintro}
	\BinaryInfC{$\left(D{y} \Tarrow A\right) \Tand \left(\Tneg{D{y}} \Tarrow B\right)$}
\RightLabel{\Tarrowelim}
\BinaryInfC{$\Tforall_{x} \left(\left(D{x} \Tarrow A\right) \Tand \left(\Tneg{D{x}} \Tarrow B\right)\right)$}
\RightLabel{\Tunivelim}
\UnaryInfC{$\left(D{0} \Tarrow A\right) \Tand \left(\Tneg{D{0}} \Tarrow B\right)$}
	\AxiomC{}
	\UnaryInfC{$D{0} \Tarrow A$}
		\AxiomC{}
		\RightLabel{DZ}
		\UnaryInfC{$D{0}$}
	\RightLabel{\Tarrowelim}
	\BinaryInfC{$A$}
\RightLabel{\Tconjelim}
\BinaryInfC{$A$}
\RightLabel{\Tarrowintro}
\UnaryInfC{$B \Tarrow A$}
\RightLabel{\Tdisjintro}
\UnaryInfC{$\left(A \Tarrow B\right) \Tor \left(B \Tarrow A\right)$}
\end{prooftree}
\vspace{\baselineskip}Now,
\begin{deduction}
\AxiomC{}
\RightLabel{DP}
\UnaryInfC{$\Texists_{y} \Phi$}
	\AxiomC{}
	\RightLabel{DX}
	\UnaryInfC{$\Tforall_{x} \left(D{x} \Tor \Tneg{D{x}}\right)$}
	\RightLabel{\Tunivelim}
	\UnaryInfC{$D{y} \Tor \Tneg{D{y}}$}
		\AxiomC{}
		\UnaryInfC{Lemma 1}
			\AxiomC{}
			\UnaryInfC{Lemma 2}
	\RightLabel{\Tdisjelim}
	\TrinaryInfC{$\left(A \Tarrow B\right) \Tor \left(B \Tarrow A\right)$}
\RightLabel{\Texistelim}
\BinaryInfC{$\left(A \Tarrow B\right) \Tor \left(B \Tarrow A\right)$}
\end{deduction}
\end{proof}
\end{proposition}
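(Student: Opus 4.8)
The plan is to instantiate \DP/ at the composite predicate
\[
	Px := (Dx \mimp A) \mand (\lnot Dx \mimp B),
\]
which is exactly the predicate appearing in the target reduction $\DPi{(Dx \mimp A) \mand (\lnot Dx \mimp B)} \derives_{\EFQ/, \TT/} \DGPi{A, B}$. The guiding idea is that the two-termed machinery lets this single predicate simulate the pair $A, B$: wherever $D$ holds the predicate reduces to something provable from $A$, and wherever $\lnot D$ holds it reduces to $B$. In its defining form, \DP/ supplies a witness $y$ together with the implication $Py \mimp \forallx Px$, and the whole task reduces to extracting $(A \mimp B) \mor (B \mimp A)$ from this one implication.

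First I would apply \DX/ to obtain $\forallx(Dx \mor \lnot Dx)$ and instantiate at the witness $y$, giving $Dy \mor \lnot Dy$; the proof then splits by $\mor$-elimination into two symmetric branches. In the branch where $Dy$ holds I aim to prove $A \mimp B$: assuming $A$, the conjunct $Dy \mimp A$ is immediate by $\mimp$-introduction, while $\lnot Dy \mimp B$ follows by \EFQ/ from the clash between the assumed $\lnot Dy$ and the branch hypothesis $Dy$. This assembles $Py$, so $Py \mimp \forallx Px$ yields $\forallx Px$; instantiating at the constant $1$ and feeding $\lnot D1$ (from \DO/) into the second conjunct produces $B$, so discharging $A$ gives $A \mimp B$ and hence $(A \mimp B) \mor (B \mimp A)$.

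The branch where $\lnot Dy$ holds is dual and proves $B \mimp A$: assuming $B$ makes $\lnot Dy \mimp B$ immediate, while $Dy \mimp A$ is recovered by \EFQ/ from $Dy$ together with $\lnot Dy$; after again obtaining $\forallx Px$ I instantiate at the constant $0$ and use $D0$ (from \DZ/) against the first conjunct to obtain $A$, discharging $B$ to give $B \mimp A$. Both branches end in $(A \mimp B) \mor (B \mimp A)$, so $\mor$-elimination on $Dy \mor \lnot Dy$ and finally $\exists$-elimination on the \DP/ witness discharge all remaining assumptions and close the derivation.

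The genuinely load-bearing step, and the main obstacle, is filling in the ``dead'' conjunct in each branch, which is precisely where \EFQ/ and the two distinguished constants are indispensable. Without \EFQ/ one cannot manufacture $B$ (respectively $A$) from the contradiction between $Dy$ and $\lnot Dy$, and without two provably distinguishable terms (via \DZ/ and \DO/) there is no point at which to instantiate $\forallx Px$ so as to force out the desired atom. This matches the earlier observation that no such reduction can exist over a one-element domain, so the careful use of \TT/ is exactly what makes the embedding go through.
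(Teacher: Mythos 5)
Your proposal is correct and follows essentially the same route as the paper's proof: the same composite predicate $(Dx \mimp A) \mand (\lnot Dx \mimp B)$, the same case split on $Dy \mor \lnot Dy$ obtained from \DX/, the same use of \EFQ/ to fill the vacuous conjunct in each branch, and the same instantiations at $1$ (with \DO/) and $0$ (with \DZ/) to extract $B$ and $A$ respectively, finishing with $\mor$-elimination inside an outermost $\exists$-elimination. Your two branches are precisely the paper's Lemma 1 and Lemma 2.
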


\clearpage

\begin{proposition} \label{prop:DP,DZ,DO,DX->WLEM}
$\DPs, \TT/ \reduces \WLEMs$
\begin{proof}
Where $\Phi = \left(\left(D{y} \Tarrow \Tneg{\Tneg{A}}\right) \Tand
\left(\Tneg{D{y}} \Tarrow \Tneg{A}\right)\right) \Tarrow \Tforall_{x}
\left(\left(D{x} \Tarrow \Tneg{\Tneg{A}}\right) \Tand \left(\Tneg{D{x}} \Tarrow
\Tneg{A}\right)\right)$,\\
Lemma 1:
\vspace{-\baselineskip}
\begin{prooftree}
\AxiomC{$\Phi$}
	\AxiomC{}
	\UnaryInfC{$\Tneg{A}$}
		\AxiomC{}
		\UnaryInfC{$A$}
	\RightLabel{\Tarrowelim}
	\BinaryInfC{$\bot$}
	\RightLabel{\Tarrowintro}
	\UnaryInfC{$\Tneg{\Tneg{A}}$}
	\RightLabel{\Tarrowintro}
	\UnaryInfC{$D{y} \Tarrow \Tneg{\Tneg{A}}$}
		\AxiomC{}
		\UnaryInfC{$\Tneg{D{y}}$}
			\AxiomC{$D{y}$}
		\RightLabel{\Tarrowelim}
		\BinaryInfC{$\bot$}
		\RightLabel{\Tarrowintro}
		\UnaryInfC{$\Tneg{A}$}
		\RightLabel{\Tarrowintro}
		\UnaryInfC{$\Tneg{D{y}} \Tarrow \Tneg{A}$}
	\RightLabel{\Tconjintro}
	\BinaryInfC{$\left(D{y} \Tarrow \Tneg{\Tneg{A}}\right) \Tand \left(\Tneg{D{y}} \Tarrow \Tneg{A}\right)$}
\RightLabel{\Tarrowelim}
\BinaryInfC{$\Tforall_{x} \left(\left(D{x} \Tarrow \Tneg{\Tneg{A}}\right) \Tand \left(\Tneg{D{x}} \Tarrow \Tneg{A}\right)\right)$}
\RightLabel{\Tunivelim}
\UnaryInfC{$\left(D{1} \Tarrow \Tneg{\Tneg{A}}\right) \Tand \left(\Tneg{D{1}} \Tarrow \Tneg{A}\right)$}
	\AxiomC{}
	\UnaryInfC{$\Tneg{D{1}} \Tarrow \Tneg{A}$}
		\AxiomC{}
		\RightLabel{DO}
		\UnaryInfC{$\Tneg{D{1}}$}
	\RightLabel{\Tarrowelim}
	\BinaryInfC{$\Tneg{A}$}
		\AxiomC{}
		\UnaryInfC{$A$}
	\RightLabel{\Tarrowelim}
	\BinaryInfC{$\bot$}
\RightLabel{\Tconjelim}
\BinaryInfC{$\bot$}
\RightLabel{\Tarrowintro}
\UnaryInfC{$\Tneg{A}$}
\RightLabel{\Tdisjintro}
\UnaryInfC{$\Tneg{A} \Tor \Tneg{\Tneg{A}}$}
\end{prooftree}
\vspace{\baselineskip}Lemma 2:
\vspace{-\baselineskip}
\begin{prooftree}
\AxiomC{$\Phi$}
	\AxiomC{$\Tneg{D{y}}$}
		\AxiomC{}
		\UnaryInfC{$D{y}$}
	\RightLabel{\Tarrowelim}
	\BinaryInfC{$\bot$}
	\RightLabel{\Tarrowintro}
	\UnaryInfC{$\Tneg{\Tneg{A}}$}
	\RightLabel{\Tarrowintro}
	\UnaryInfC{$D{y} \Tarrow \Tneg{\Tneg{A}}$}
		\AxiomC{}
		\UnaryInfC{$\Tneg{A}$}
		\RightLabel{\Tarrowintro}
		\UnaryInfC{$\Tneg{D{y}} \Tarrow \Tneg{A}$}
	\RightLabel{\Tconjintro}
	\BinaryInfC{$\left(D{y} \Tarrow \Tneg{\Tneg{A}}\right) \Tand \left(\Tneg{D{y}} \Tarrow \Tneg{A}\right)$}
\RightLabel{\Tarrowelim}
\BinaryInfC{$\Tforall_{x} \left(\left(D{x} \Tarrow \Tneg{\Tneg{A}}\right) \Tand \left(\Tneg{D{x}} \Tarrow \Tneg{A}\right)\right)$}
\RightLabel{\Tunivelim}
\UnaryInfC{$\left(D{0} \Tarrow \Tneg{\Tneg{A}}\right) \Tand \left(\Tneg{D{0}} \Tarrow \Tneg{A}\right)$}
	\AxiomC{}
	\UnaryInfC{$D{0} \Tarrow \Tneg{\Tneg{A}}$}
		\AxiomC{}
		\RightLabel{DZ}
		\UnaryInfC{$D{0}$}
	\RightLabel{\Tarrowelim}
	\BinaryInfC{$\Tneg{\Tneg{A}}$}
		\AxiomC{}
		\UnaryInfC{$\Tneg{A}$}
	\RightLabel{\Tarrowelim}
	\BinaryInfC{$\bot$}
\RightLabel{\Tconjelim}
\BinaryInfC{$\bot$}
\RightLabel{\Tarrowintro}
\UnaryInfC{$\Tneg{\Tneg{A}}$}
\RightLabel{\Tdisjintro}
\UnaryInfC{$\Tneg{A} \Tor \Tneg{\Tneg{A}}$}
\end{prooftree}
\vspace{\baselineskip}Now,
\begin{deduction}
\AxiomC{}
\RightLabel{DP}
\UnaryInfC{$\Texists_{y} \Phi$}
	\AxiomC{}
	\RightLabel{DX}
	\UnaryInfC{$\Tforall_{x} \left(D{x} \Tor \Tneg{D{x}}\right)$}
	\RightLabel{\Tunivelim}
	\UnaryInfC{$D{y} \Tor \Tneg{D{y}}$}
		\AxiomC{}
		\UnaryInfC{Lemma 1}
			\AxiomC{}
			\UnaryInfC{Lemma 2}
	\RightLabel{\Tdisjelim}
	\TrinaryInfC{$\Tneg{A} \Tor \Tneg{\Tneg{A}}$}
\RightLabel{\Texistelim}
\BinaryInfC{$\Tneg{A} \Tor \Tneg{\Tneg{A}}$}
\end{deduction}
\end{proof}
\end{proposition}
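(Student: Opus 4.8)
The plan is to mirror the proof of Proposition~\ref{prop:DP,EFQ,DZ,DO,DX->DGP}, applying \DP/ to the predicate
\[
	Px := (Dx \mimp \lnot\lnot A) \mand (\lnot Dx \mimp \lnot A).
\]
Writing $\Phi := Py \mimp \forallx Px$, the scheme \DP/ supplies $\existsy \Phi$ for an arbitrary formula $A$. The idea is that the two constants $0,1$ separated by \TT/ let the witness $y$ be tested: reading $Px$ at a term on which $D$ holds forces $\lnot\lnot A$, and at a term on which $D$ fails forces $\lnot A$. So instantiating $\forallx Px$ at $1$, where \DO/ gives $\lnot D1$, will expose $\lnot A$, while instantiating at $0$, where \DZ/ gives $D0$, will expose $\lnot\lnot A$. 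Since we cannot know a priori how $y$ behaves, I would split on $Dy \mor \lnot Dy$, which \DX/ provides.

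Concretely, I would first prove two auxiliary derivations, each concluding $\lnot A \mor \lnot\lnot A$ from $\Phi$. In the first, under the hypothesis $Dy$, I assemble $Py$ by deriving $Dy \mimp \lnot\lnot A$ (leaving a copy of $A$ open) and $\lnot Dy \mimp \lnot A$ (whose hypothesis $\lnot Dy$ contradicts $Dy$); applying $\Phi$ and specialising the resulting $\forallx Px$ to $x := 1$ gives $\lnot D1 \mimp \lnot A$, which \DO/ discharges to $\lnot A$. Contradicting the still-open $A$ yields $\bot$, and discharging $A$ produces the disjunct $\lnot A$. In the second, under $\lnot Dy$, I symmetrically assemble $Py$ (now leaving a copy of $\lnot A$ open), specialise to $x := 0$, and use \DZ/ with the conjunct $D0 \mimp \lnot\lnot A$ to obtain $\lnot\lnot A$; contradicting the open $\lnot A$ and discharging it produces the disjunct $\lnot\lnot A$. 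Finally, $Dy \mor \lnot Dy$ from \DX/ drives a disjunction elimination merging the two derivations into $\lnot A \mor \lnot\lnot A$, and an existential elimination on $\existsy \Phi$ discharges $\Phi$ together with the eigenvariable $y$, leaving $\lnot A \mor \lnot\lnot A$ outright.

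The delicate point -- and the sole reason this yields \WLEM/ \emph{without} \EFQ/, unlike Proposition~\ref{prop:DP,EFQ,DZ,DO,DX->DGP} -- is that every place where the \DGP/ argument would invoke \EFQ/ to turn a derived $\bot$ into an arbitrary formula is here replaced by a vacuous implication introduction. Both intended conclusions, $\lnot A$ and $\lnot\lnot A$, are themselves negations, i.e.\ implications with conclusion $\bot$; so whenever a branch reaches $\bot$ (from $Dy \mand \lnot Dy$ in building a conjunct, or from the final contradiction with the open assumption), the required negation follows immediately by a single implication introduction with a vacuous discharge. The main bookkeeping obstacle is therefore keeping track of which assumption -- $A$ in the first derivation, $\lnot A$ in the second -- must remain open throughout the construction of $Py$ and the application of $\Phi$, so that it can be discharged exactly at the final implication introduction producing the chosen disjunct.
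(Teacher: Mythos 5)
Your proposal is correct and is essentially the paper's own proof: the same predicate $(Dx \mimp \lnot\lnot A) \mand (\lnot Dx \mimp \lnot A)$, the same $\Phi$, the same case split on $Dy \mor \lnot Dy$ supplied by \DX/, with the $Dy$ branch instantiating at $1$ and using \DO/ while the $\lnot Dy$ branch instantiates at $0$ and uses \DZ/, followed by the same disjunction and existential eliminations. Your observation about why \EFQ/ is not needed --- every $\bot$ is promoted only to a negation, so a vacuous implication introduction suffices --- is exactly the mechanism at work in the paper's Lemmas 1 and 2.
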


\clearpage

\begin{proposition} \label{prop:HE,EFQ,DZ,DO,DX->DGP}
$\HEs, \EFQs, \TT/ \reduces \DGPs$
\begin{proof}
Where $\Phi = \Texists_{x} \left(\left(D{x} \Tarrow A\right) \Tand
\left(\Tneg{D{x}} \Tarrow B\right)\right) \Tarrow \left(\left(D{y} \Tarrow
A\right) \Tand \left(\Tneg{D{y}} \Tarrow B\right)\right)$,\\
Lemma 1:
\vspace{-\baselineskip}
\begin{prooftree}
\AxiomC{$\Phi$}
	\AxiomC{}
	\RightLabel{EFQ}
	\UnaryInfC{$\bot \Tarrow A$}
		\AxiomC{}
		\RightLabel{DO}
		\UnaryInfC{$\Tneg{D{1}}$}
			\AxiomC{}
			\UnaryInfC{$D{1}$}
		\RightLabel{\Tarrowelim}
		\BinaryInfC{$\bot$}
	\RightLabel{\Tarrowelim}
	\BinaryInfC{$A$}
	\RightLabel{\Tarrowintro}
	\UnaryInfC{$D{1} \Tarrow A$}
		\AxiomC{}
		\UnaryInfC{$B$}
		\RightLabel{\Tarrowintro}
		\UnaryInfC{$\Tneg{D{1}} \Tarrow B$}
	\RightLabel{\Tconjintro}
	\BinaryInfC{$\left(D{1} \Tarrow A\right) \Tand \left(\Tneg{D{1}} \Tarrow B\right)$}
	\RightLabel{\Texistintro}
	\UnaryInfC{$\Texists_{x} \left(\left(D{x} \Tarrow A\right) \Tand \left(\Tneg{D{x}} \Tarrow B\right)\right)$}
\RightLabel{\Tarrowelim}
\BinaryInfC{$\left(D{y} \Tarrow A\right) \Tand \left(\Tneg{D{y}} \Tarrow B\right)$}
	\AxiomC{}
	\UnaryInfC{$D{y} \Tarrow A$}
		\AxiomC{$D{y}$}
	\RightLabel{\Tarrowelim}
	\BinaryInfC{$A$}
\RightLabel{\Tconjelim}
\BinaryInfC{$A$}
\RightLabel{\Tarrowintro}
\UnaryInfC{$B \Tarrow A$}
\RightLabel{\Tdisjintro}
\UnaryInfC{$\left(A \Tarrow B\right) \Tor \left(B \Tarrow A\right)$}
\end{prooftree}
\vspace{\baselineskip}Lemma 2:
\vspace{-\baselineskip}
\begin{prooftree}
\AxiomC{$\Phi$}
	\AxiomC{}
	\UnaryInfC{$A$}
	\RightLabel{\Tarrowintro}
	\UnaryInfC{$D{0} \Tarrow A$}
		\AxiomC{}
		\RightLabel{EFQ}
		\UnaryInfC{$\bot \Tarrow B$}
			\AxiomC{}
			\UnaryInfC{$\Tneg{D{0}}$}
				\AxiomC{}
				\RightLabel{DZ}
				\UnaryInfC{$D{0}$}
			\RightLabel{\Tarrowelim}
			\BinaryInfC{$\bot$}
		\RightLabel{\Tarrowelim}
		\BinaryInfC{$B$}
		\RightLabel{\Tarrowintro}
		\UnaryInfC{$\Tneg{D{0}} \Tarrow B$}
	\RightLabel{\Tconjintro}
	\BinaryInfC{$\left(D{0} \Tarrow A\right) \Tand \left(\Tneg{D{0}} \Tarrow B\right)$}
	\RightLabel{\Texistintro}
	\UnaryInfC{$\Texists_{x} \left(\left(D{x} \Tarrow A\right) \Tand \left(\Tneg{D{x}} \Tarrow B\right)\right)$}
\RightLabel{\Tarrowelim}
\BinaryInfC{$\left(D{y} \Tarrow A\right) \Tand \left(\Tneg{D{y}} \Tarrow B\right)$}
	\AxiomC{}
	\UnaryInfC{$\Tneg{D{y}} \Tarrow B$}
		\AxiomC{$\Tneg{D{y}}$}
	\RightLabel{\Tarrowelim}
	\BinaryInfC{$B$}
\RightLabel{\Tconjelim}
\BinaryInfC{$B$}
\RightLabel{\Tarrowintro}
\UnaryInfC{$A \Tarrow B$}
\RightLabel{\Tdisjintro}
\UnaryInfC{$\left(A \Tarrow B\right) \Tor \left(B \Tarrow A\right)$}
\end{prooftree}
\vspace{\baselineskip}Now,
\begin{deduction}
\AxiomC{}
\RightLabel{HE}
\UnaryInfC{$\Texists_{y} \Phi$}
	\AxiomC{}
	\RightLabel{DX}
	\UnaryInfC{$\Tforall_{x} \left(D{x} \Tor \Tneg{D{x}}\right)$}
	\RightLabel{\Tunivelim}
	\UnaryInfC{$D{y} \Tor \Tneg{D{y}}$}
		\AxiomC{}
		\UnaryInfC{Lemma 1}
			\AxiomC{}
			\UnaryInfC{Lemma 2}
	\RightLabel{\Tdisjelim}
	\TrinaryInfC{$\left(A \Tarrow B\right) \Tor \left(B \Tarrow A\right)$}
\RightLabel{\Texistelim}
\BinaryInfC{$\left(A \Tarrow B\right) \Tor \left(B \Tarrow A\right)$}
\end{deduction}
\end{proof}
\end{proposition}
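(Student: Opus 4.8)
The plan is to apply $\HE/$ to the same embedding predicate that drives the reduction $\DPs, \EFQs, \TT/ \reduces \DGPs$, namely
\[
	Px := (Dx \mimp A) \mand (\lnot Dx \mimp B)
\ . \]
This $P$ is chosen so that, under $\TT/$, instantiation at the separated constants $0$ and $1$ gives $\forallx Px \derives A \mand B$ and $\existsx Px \derives A \mor B$; the instance of $\HE/$ in its original form $\existsy(\existsx Px \mimp Py)$ is then $\existsy \Phi$ with
\[
	\Phi := \existsx Px \mimp Py
\ , \]
and the goal is to collapse $\existsy \Phi$ to $\DGPi{A, B}$.

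First I would take $\existsy \Phi$ from $\HE/$ and eliminate the existential, introducing a witness $y$ with $\Phi$ among the open assumptions. Since $\TT/$ supplies $\forallx(Dx \mor \lnot Dx)$, instantiating at $y$ yields $Dy \mor \lnot Dy$; I would then case split on this disjunction, proving $\DGPi{A, B}$ in each of the two branches, discharging the split by $\mor$-elimination and finally the witness $y$ by $\existsx$-elimination.

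The crucial difference from the $\DP/$ argument is that $\DP/$ supplies $Py \mimp \forallx Px$, so that merely assuming the antecedent hands over everything, whereas $\HE/$ supplies only $\existsx Px \mimp Py$, so I must first \emph{manufacture} the antecedent $\existsx Px$ before $\Phi$ will deliver $Py$. The device that makes this possible is that $\TT/$ together with $\EFQ/$ renders one conjunct of $P$ vacuously true while the current case hypothesis supplies the other, so an appropriate witness is always available. Concretely, in the branch $Dy$ (which targets $B \mimp A$) I would assume $B$ and build the witness at $1$: the conjunct $D1 \mimp A$ follows from $\lnot D1$ by $\EFQ/$, and $\lnot D1 \mimp B$ follows from the assumed $B$; hence $P1$, so $\existsx Px$, so $\Phi$ yields $Py$, and applying the conjunct $Dy \mimp A$ to the branch hypothesis $Dy$ gives $A$. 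Discharging $B$ yields $B \mimp A$.

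The branch $\lnot Dy$ (which targets $A \mimp B$) is symmetric, with $0, 1$ interchanged, with $A, B$ interchanged, and with the two conjuncts swapped: assuming $A$, I would build the witness at $0$ (using $D0$ from $\TT/$ and $\EFQ/$ for $\lnot D0 \mimp B$, and the assumed $A$ for $D0 \mimp A$), feed it through $\Phi$ to obtain $Py$, apply $\lnot Dy \mimp B$ to the branch hypothesis $\lnot Dy$ to get $B$, and discharge $A$ for $A \mimp B$. I expect the only real obstacle to be the bookkeeping of open assumptions around the manufactured existential witness; once that device is in place, both branches close by $\mor$-introduction into $(A \mimp B) \mor (B \mimp A)$.
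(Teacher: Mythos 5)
Your proposal is correct and follows essentially the same route as the paper's own proof: you use the same embedding predicate $Px = (Dx \mimp A) \mand (\lnot Dx \mimp B)$, the same \HE/ instance $\existsy \Phi$ with $\Phi = \existsx Px \mimp Py$, and the same case split on $Dy \mor \lnot Dy$ obtained from \DX/ inside the existential elimination. Your two branches coincide exactly with the paper's Lemma 1 (case $Dy$: manufacture the witness at $1$ via \DO/ and \EFQ/, concluding $B \mimp A$) and Lemma 2 (case $\lnot Dy$: witness at $0$ via \DZ/ and \EFQ/, concluding $A \mimp B$).
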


\clearpage

\begin{proposition} \label{prop:HE,DZ,DO,DX->WLEM}
$\HEs, \TT/ \reduces \WLEMs$
\begin{proof}
Where $\Phi = \Texists_{x} \left(\left(D{x} \Tarrow \Tneg{\Tneg{A}}\right) \Tand
\left(\Tneg{D{x}} \Tarrow \Tneg{A}\right)\right) \Tarrow \left(\left(D{y}
\Tarrow \Tneg{\Tneg{A}}\right) \Tand \left(\Tneg{D{y}} \Tarrow
\Tneg{A}\right)\right)$,\\
Lemma 1:
\vspace{-\baselineskip}
\begin{prooftree}
\AxiomC{$\Phi$}
	\AxiomC{}
	\RightLabel{DO}
	\UnaryInfC{$\Tneg{D{1}}$}
		\AxiomC{}
		\UnaryInfC{$D{1}$}
	\RightLabel{\Tarrowelim}
	\BinaryInfC{$\bot$}
	\RightLabel{\Tarrowintro}
	\UnaryInfC{$\Tneg{\Tneg{A}}$}
	\RightLabel{\Tarrowintro}
	\UnaryInfC{$D{1} \Tarrow \Tneg{\Tneg{A}}$}
		\AxiomC{}
		\UnaryInfC{$\Tneg{A}$}
		\RightLabel{\Tarrowintro}
		\UnaryInfC{$\Tneg{D{1}} \Tarrow \Tneg{A}$}
	\RightLabel{\Tconjintro}
	\BinaryInfC{$\left(D{1} \Tarrow \Tneg{\Tneg{A}}\right) \Tand \left(\Tneg{D{1}} \Tarrow \Tneg{A}\right)$}
	\RightLabel{\Texistintro}
	\UnaryInfC{$\Texists_{x} \left(\left(D{x} \Tarrow \Tneg{\Tneg{A}}\right) \Tand \left(\Tneg{D{x}} \Tarrow \Tneg{A}\right)\right)$}
\RightLabel{\Tarrowelim}
\BinaryInfC{$\left(D{y} \Tarrow \Tneg{\Tneg{A}}\right) \Tand \left(\Tneg{D{y}} \Tarrow \Tneg{A}\right)$}
	\AxiomC{}
	\UnaryInfC{$D{y} \Tarrow \Tneg{\Tneg{A}}$}
		\AxiomC{$D{y}$}
	\RightLabel{\Tarrowelim}
	\BinaryInfC{$\Tneg{\Tneg{A}}$}
		\AxiomC{}
		\UnaryInfC{$\Tneg{A}$}
	\RightLabel{\Tarrowelim}
	\BinaryInfC{$\bot$}
\RightLabel{\Tconjelim}
\BinaryInfC{$\bot$}
\RightLabel{\Tarrowintro}
\UnaryInfC{$\Tneg{\Tneg{A}}$}
\RightLabel{\Tdisjintro}
\UnaryInfC{$\Tneg{A} \Tor \Tneg{\Tneg{A}}$}
\end{prooftree}
Lemma 2:
\vspace{-\baselineskip}
\begin{prooftree}
\AxiomC{$\Phi$}
	\AxiomC{}
	\UnaryInfC{$\Tneg{A}$}
		\AxiomC{}
		\UnaryInfC{$A$}
	\RightLabel{\Tarrowelim}
	\BinaryInfC{$\bot$}
	\RightLabel{\Tarrowintro}
	\UnaryInfC{$\Tneg{\Tneg{A}}$}
	\RightLabel{\Tarrowintro}
	\UnaryInfC{$D{0} \Tarrow \Tneg{\Tneg{A}}$}
		\AxiomC{}
		\UnaryInfC{$\Tneg{D{0}}$}
			\AxiomC{}
			\RightLabel{DZ}
			\UnaryInfC{$D{0}$}
		\RightLabel{\Tarrowelim}
		\BinaryInfC{$\bot$}
		\RightLabel{\Tarrowintro}
		\UnaryInfC{$\Tneg{A}$}
		\RightLabel{\Tarrowintro}
		\UnaryInfC{$\Tneg{D{0}} \Tarrow \Tneg{A}$}
	\RightLabel{\Tconjintro}
	\BinaryInfC{$\left(D{0} \Tarrow \Tneg{\Tneg{A}}\right) \Tand \left(\Tneg{D{0}} \Tarrow \Tneg{A}\right)$}
	\RightLabel{\Texistintro}
	\UnaryInfC{$\Texists_{x} \left(\left(D{x} \Tarrow \Tneg{\Tneg{A}}\right) \Tand \left(\Tneg{D{x}} \Tarrow \Tneg{A}\right)\right)$}
\RightLabel{\Tarrowelim}
\BinaryInfC{$\left(D{y} \Tarrow \Tneg{\Tneg{A}}\right) \Tand \left(\Tneg{D{y}} \Tarrow \Tneg{A}\right)$}
	\AxiomC{}
	\UnaryInfC{$\Tneg{D{y}} \Tarrow \Tneg{A}$}
		\AxiomC{$\Tneg{D{y}}$}
	\RightLabel{\Tarrowelim}
	\BinaryInfC{$\Tneg{A}$}
		\AxiomC{}
		\UnaryInfC{$A$}
	\RightLabel{\Tarrowelim}
	\BinaryInfC{$\bot$}
\RightLabel{\Tconjelim}
\BinaryInfC{$\bot$}
\RightLabel{\Tarrowintro}
\UnaryInfC{$\Tneg{A}$}
\RightLabel{\Tdisjintro}
\UnaryInfC{$\Tneg{A} \Tor \Tneg{\Tneg{A}}$}
\end{prooftree}
Now,
\vspace{\baselineskip}
\begin{deduction}
\AxiomC{}
\RightLabel{HE}
\UnaryInfC{$\Texists_{y} \Phi$}
	\AxiomC{}
	\RightLabel{DX}
	\UnaryInfC{$\Tforall_{x} \left(D{x} \Tor \Tneg{D{x}}\right)$}
	\RightLabel{\Tunivelim}
	\UnaryInfC{$D{y} \Tor \Tneg{D{y}}$}
		\AxiomC{}
		\UnaryInfC{Lemma 1}
			\AxiomC{}
			\UnaryInfC{Lemma 2}
	\RightLabel{\Tdisjelim}
	\TrinaryInfC{$\Tneg{A} \Tor \Tneg{\Tneg{A}}$}
\RightLabel{\Texistelim}
\BinaryInfC{$\Tneg{A} \Tor \Tneg{\Tneg{A}}$}
\end{deduction}
\end{proof}
\end{proposition}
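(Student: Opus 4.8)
The plan is to instantiate \HE/ at the single predicate
\[
	Px := (Dx \mimp \lnot\lnot A) \mand (\lnot Dx \mimp \lnot A),
\]
that is, to use the instance $\HEi{(Dx \mimp \lnot\lnot A) \mand (\lnot Dx \mimp \lnot A)}$ already flagged among the \TT/-reductions above, and to read $\WLEMi{A}$ off it using only the rules of \TT/. The encoding exploits the weak separation that $D$ draws between the constants $0$ and $1$: at a term satisfying $Dx$ the predicate forces $\lnot\lnot A$, and at a term satisfying $\lnot Dx$ it forces $\lnot A$. So once \HE/ produces a witness term $y$, deciding $Dy$ against $\lnot Dy$ will hand back either $\lnot\lnot A$ or $\lnot A$ --- precisely the two disjuncts of \WLEM/.

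Writing $\Phi$ for the body $\existsx Px \mimp Py$ of this instance, \HE/ gives $\existsy \Phi$, while \DX/ gives $\forall_x(Dx \mor \lnot Dx)$ and hence $Dy \mor \lnot Dy$ for the witness variable. I would then prove $\lnot A \mor \lnot\lnot A$ by $\mor$-elimination on $Dy \mor \lnot Dy$, discharging the two case hypotheses, and finish with an $\existsy$-elimination that discharges $\Phi$.

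In the case $Dy$ I would supply $\Phi$ its antecedent $\existsx Px$ via the witness $x := 1$: the conjunct $D1 \mimp \lnot\lnot A$ holds because \DO/ gives $\lnot D1$, so that $D1$ yields $\bot$ and thence $\lnot\lnot A$, while $\lnot D1 \mimp \lnot A$ holds by discharging $\lnot D1$ vacuously over an assumed $\lnot A$. Applying $\Phi$ returns $Py$, whose first conjunct together with the case hypothesis $Dy$ gives $\lnot\lnot A$; pairing this with the still-open $\lnot A$ yields $\bot$, and discharging that $\lnot A$ produces $\lnot\lnot A$, the right disjunct. The case $\lnot Dy$ is the mirror image, built from the witness $x := 0$: \DZ/ makes $\lnot D0 \mimp \lnot A$ vacuous, an assumed $A$ supplies $D0 \mimp \lnot\lnot A$, and reading the second conjunct of $Py$ against $\lnot Dy$ gives $\lnot A$; set against the open $A$ this yields $\bot$, and discharging $A$ delivers $\lnot A$, the left disjunct.

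The conceptual point to get right --- and the reason this reduction, unlike its \DGP/ analogue in Proposition~\ref{prop:HE,EFQ,DZ,DO,DX->DGP}, needs no \EFQ/ --- is that both consequents $\lnot\lnot A$ and $\lnot A$ are themselves negations. Each time a clash such as $D1 \mand \lnot D1$ (or $\lnot D0 \mand D0$) produces $\bot$, that $\bot$ can be absorbed directly by an $\mimp$-introduction into the negation being built, so there is never any call to explode $\bot$ into an arbitrary formula. The only genuine bookkeeping is to arrange that the negative assumption borrowed while constructing the existential witness --- the $\lnot A$ in the first case, the $A$ in the second --- is exactly the one discharged after the final contradiction, so that each branch closes with nothing open but $\Phi$ and its case hypothesis.
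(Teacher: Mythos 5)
Your proposal is correct and is essentially the paper's own proof: the same instance $Px := (Dx \mimp \lnot\lnot A) \mand (\lnot Dx \mimp \lnot A)$, the same case split on $Dy \mor \lnot Dy$ obtained from \DX/ inside the $\exists$-elimination on the \HE/ witness, the same witnesses ($1$ via \DO/ in the $Dy$ case, $0$ via \DZ/ in the $\lnot Dy$ case), and the same discharge bookkeeping whereby the $\lnot A$ (resp.\ $A$) left open while building the existential witness is discharged by the final $\mimp$-introduction. Your observation that no \EFQ/ is needed because both consequents are negations, so every $\bot$ is absorbed by an $\mimp$-introduction, is exactly what distinguishes this reduction from the \DGP/ analogue in the paper as well.
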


\clearpage

\begin{proposition} \label{prop:GMP,DZ,DO,DX->WLEM}
$\GMPs, \TT/ \reduces \WLEMs$
\begin{proof}
Lemma 1:
\begin{prooftree}
\AxiomC{}
\UnaryInfC{$\Tforall_{x} \left(\left(D{x} \Tarrow \Tneg{\Tneg{A}}\right) \Tand \left(\Tneg{D{x}} \Tarrow \Tneg{A}\right)\right)$}
\RightLabel{\Tunivelim}
\UnaryInfC{$\left(D{0} \Tarrow \Tneg{\Tneg{A}}\right) \Tand \left(\Tneg{D{0}} \Tarrow \Tneg{A}\right)$}
	\AxiomC{}
	\UnaryInfC{$D{0} \Tarrow \Tneg{\Tneg{A}}$}
		\AxiomC{}
		\RightLabel{DZ}
		\UnaryInfC{$D{0}$}
	\RightLabel{\Tarrowelim}
	\BinaryInfC{$\Tneg{\Tneg{A}}$}
\RightLabel{\Tconjelim}
\BinaryInfC{$\Tneg{\Tneg{A}}$}
	\AxiomC{}
	\UnaryInfC{$\Tforall_{x} \left(\left(D{x} \Tarrow \Tneg{\Tneg{A}}\right) \Tand \left(\Tneg{D{x}} \Tarrow \Tneg{A}\right)\right)$}
	\RightLabel{\Tunivelim}
	\UnaryInfC{$\left(D{1} \Tarrow \Tneg{\Tneg{A}}\right) \Tand \left(\Tneg{D{1}} \Tarrow \Tneg{A}\right)$}
		\AxiomC{}
		\UnaryInfC{$\Tneg{D{1}} \Tarrow \Tneg{A}$}
			\AxiomC{}
			\RightLabel{DO}
			\UnaryInfC{$\Tneg{D{1}}$}
		\RightLabel{\Tarrowelim}
		\BinaryInfC{$\Tneg{A}$}
	\RightLabel{\Tconjelim}
	\BinaryInfC{$\Tneg{A}$}
\RightLabel{\Tarrowelim}
\BinaryInfC{$\bot$}
\RightLabel{\Tarrowintro}
\UnaryInfC{$\Tneg{\Tforall_{x} \left(\left(D{x} \Tarrow \Tneg{\Tneg{A}}\right) \Tand \left(\Tneg{D{x}} \Tarrow \Tneg{A}\right)\right)}$}
\end{prooftree}
Lemma 2:
\vspace{-\baselineskip}
\begin{prooftree}
\AxiomC{$\Tneg{\left(\left(D{x} \Tarrow \Tneg{\Tneg{A}}\right) \Tand \left(\Tneg{D{x}} \Tarrow \Tneg{A}\right)\right)}$}
	\AxiomC{}
	\UnaryInfC{$\Tneg{A}$}
		\AxiomC{}
		\UnaryInfC{$A$}
	\RightLabel{\Tarrowelim}
	\BinaryInfC{$\bot$}
	\RightLabel{\Tarrowintro}
	\UnaryInfC{$\Tneg{\Tneg{A}}$}
	\RightLabel{\Tarrowintro}
	\UnaryInfC{$D{x} \Tarrow \Tneg{\Tneg{A}}$}
		\AxiomC{}
		\UnaryInfC{$\Tneg{D{x}}$}
			\AxiomC{$D{x}$}
		\RightLabel{\Tarrowelim}
		\BinaryInfC{$\bot$}
		\RightLabel{\Tarrowintro}
		\UnaryInfC{$\Tneg{A}$}
		\RightLabel{\Tarrowintro}
		\UnaryInfC{$\Tneg{D{x}} \Tarrow \Tneg{A}$}
	\RightLabel{\Tconjintro}
	\BinaryInfC{$\left(D{x} \Tarrow \Tneg{\Tneg{A}}\right) \Tand \left(\Tneg{D{x}} \Tarrow \Tneg{A}\right)$}
\RightLabel{\Tarrowelim}
\BinaryInfC{$\bot$}
\RightLabel{\Tarrowintro}
\UnaryInfC{$\Tneg{A}$}
\RightLabel{\Tdisjintro}
\UnaryInfC{$\Tneg{A} \Tor \Tneg{\Tneg{A}}$}
\end{prooftree}
Lemma 3:
\vspace{-\baselineskip}
\begin{prooftree}
\AxiomC{$\Tneg{\left(\left(D{x} \Tarrow \Tneg{\Tneg{A}}\right) \Tand \left(\Tneg{D{x}} \Tarrow \Tneg{A}\right)\right)}$}
	\AxiomC{$\Tneg{D{x}}$}
		\AxiomC{}
		\UnaryInfC{$D{x}$}
	\RightLabel{\Tarrowelim}
	\BinaryInfC{$\bot$}
	\RightLabel{\Tarrowintro}
	\UnaryInfC{$\Tneg{\Tneg{A}}$}
	\RightLabel{\Tarrowintro}
	\UnaryInfC{$D{x} \Tarrow \Tneg{\Tneg{A}}$}
		\AxiomC{}
		\UnaryInfC{$\Tneg{A}$}
		\RightLabel{\Tarrowintro}
		\UnaryInfC{$\Tneg{D{x}} \Tarrow \Tneg{A}$}
	\RightLabel{\Tconjintro}
	\BinaryInfC{$\left(D{x} \Tarrow \Tneg{\Tneg{A}}\right) \Tand \left(\Tneg{D{x}} \Tarrow \Tneg{A}\right)$}
\RightLabel{\Tarrowelim}
\BinaryInfC{$\bot$}
\RightLabel{\Tarrowintro}
\UnaryInfC{$\Tneg{\Tneg{A}}$}
\RightLabel{\Tdisjintro}
\UnaryInfC{$\Tneg{A} \Tor \Tneg{\Tneg{A}}$}
\end{prooftree}
Now, where $\Phi = \Tneg{\Tforall_{x} \left(\left(D{x} \Tarrow
\Tneg{\Tneg{A}}\right) \Tand \left(\Tneg{D{x}} \Tarrow \Tneg{A}\right)\right)}
\Tarrow \Texists_{x} \Tneg{\left(\left(D{x} \Tarrow \Tneg{\Tneg{A}}\right) \Tand
\left(\Tneg{D{x}} \Tarrow \Tneg{A}\right)\right)}$,\\
\vspace{\baselineskip}
\begin{deduction}
\AxiomC{}
\RightLabel{GMP}
\UnaryInfC{$\Phi$}
	\AxiomC{}
	\UnaryInfC{Lemma 1}
\RightLabel{\Tarrowelim}
\BinaryInfC{$\Texists_{x} \Tneg{\left(\left(D{x} \Tarrow \Tneg{\Tneg{A}}\right) \Tand \left(\Tneg{D{x}} \Tarrow \Tneg{A}\right)\right)}$}
	\AxiomC{}
	\RightLabel{DX}
	\UnaryInfC{$\Tforall_{x} \left(D{x} \Tor \Tneg{D{x}}\right)$}
	\RightLabel{\Tunivelim}
	\UnaryInfC{$D{x} \Tor \Tneg{D{x}}$}
		\AxiomC{}
		\UnaryInfC{Lemma 2}
			\AxiomC{}
			\UnaryInfC{Lemma 3}
	\RightLabel{\Tdisjelim}
	\TrinaryInfC{$\Tneg{A} \Tor \Tneg{\Tneg{A}}$}
\RightLabel{\Texistelim}
\BinaryInfC{$\Tneg{A} \Tor \Tneg{\Tneg{A}}$}
\end{deduction}
\end{proof}
\end{proposition}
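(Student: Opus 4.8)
The plan is to reuse the embedding used for the companion results in Propositions~\ref{prop:DP,DZ,DO,DX->WLEM} and~\ref{prop:HE,DZ,DO,DX->WLEM}, applying \GMP/ to the predicate
\[
	Px := (Dx \mimp \lnot\lnot A) \mand (\lnot Dx \mimp \lnot A).
\]
The two-termed rules provide a constant $0$ with $D0$ and a constant $1$ with $\lnot D1$, so that intuitively $P0$ behaves like $\lnot\lnot A$ and $P1$ like $\lnot A$, while \DX/ makes this dichotomy available at an arbitrary term.

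First I would establish the antecedent of \GMP/ by proving $\lnot\forallx Px$. Assuming $\forallx Px$, instantiation at $0$ and \DZ/ applied to the first conjunct give $\lnot\lnot A$, whereas instantiation at $1$ and \DO/ applied to the second conjunct give $\lnot A$; applying $\lnot\lnot A$ to $\lnot A$ yields $\bot$, and discharging $\forallx Px$ yields $\lnot\forallx Px$. Feeding this into \GMP/ delivers $\existsx \lnot Px$.

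Next comes the extraction. After an $\exists$-elimination on $\existsx \lnot Px$ I would split the witness with \DX/ into the cases $Dx$ and $\lnot Dx$. In the $Dx$ case I rebuild the conjunction under a temporary assumption $A$: the first conjunct comes from the minimal validity of $A \mimp \lnot\lnot A$, and the second from contradicting the local hypothesis $\lnot Dx$ against the case assumption $Dx$; then $\lnot Px$ fires to $\bot$ and, discharging $A$, produces $\lnot A$. The $\lnot Dx$ case is symmetric and produces $\lnot\lnot A$. In both cases we obtain $\lnot A \mor \lnot\lnot A$, closing the $\lor$-elimination and the $\exists$-elimination with \WLEM/.

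The step I expect to be the main obstacle — and the reason the predicate is built from $\lnot\lnot A$ and $\lnot A$ rather than from $A$ and $B$ as in the \DGP/ derivations — is performing these reconstructions \emph{without \EFQ/}. Every contradiction in the argument arises from the decidability clash $Dx \mand \lnot Dx$, and each time the formula we must then assert ($\lnot A$ or $\lnot\lnot A$) is itself of the shape $C \mimp \bot$; such a negation is obtainable in plain minimal logic by keeping the derived $\bot$ in scope and discharging a vacuous $C$, so no instance of \emph{ex falso} is required. Checking that every explosion in the three sub-derivations indeed lands on such a negative formula is the one piece of bookkeeping that needs care.
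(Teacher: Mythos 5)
Your proposal is correct and follows essentially the same route as the paper's proof: the same predicate $(Dx \mimp \lnot\lnot A) \mand (\lnot Dx \mimp \lnot A)$, the same derivation of the \GMP/ antecedent by instantiating at $0$ and $1$ via \DZ/ and \DO/, and the same \DX/ case split inside the $\exists$-elimination, with the $Dx$ case yielding $\lnot A$ and the $\lnot Dx$ case yielding $\lnot\lnot A$. In particular, your closing observation that every contradiction lands on a formula of the form $C \mimp \bot$, so that only vacuous discharges and never \EFQ/ are required, is exactly the mechanism used in the paper's Lemmas 2 and 3.
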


\clearpage

\begin{proposition} \label{prop:DP,LEM->GLPOA}
$\DPs, \LEMs \reduces \GLPOAs$
\begin{proof}
\begin{deduction}
\AxiomC{}
\RightLabel{DP}
\UnaryInfC{$\Texists_{y} \left(P{y} \Tarrow \Tforall_{x} P{x}\right)$}
	\AxiomC{}
	\RightLabel{LEM}
	\UnaryInfC{$P{y} \Tor \Tneg{P{y}}$}
		\AxiomC{}
		\UnaryInfC{$P{y} \Tarrow \Tforall_{x} P{x}$}
			\AxiomC{}
			\UnaryInfC{$P{y}$}
		\RightLabel{\Tarrowelim}
		\BinaryInfC{$\Tforall_{x} P{x}$}
		\RightLabel{\Tdisjintro}
		\UnaryInfC{$\Tforall_{x} P{x} \Tor \Texists_{x} \Tneg{P{x}}$}
			\AxiomC{}
			\UnaryInfC{$\Tneg{P{y}}$}
			\RightLabel{\Texistintro}
			\UnaryInfC{$\Texists_{x} \Tneg{P{x}}$}
			\RightLabel{\Tdisjintro}
			\UnaryInfC{$\Tforall_{x} P{x} \Tor \Texists_{x} \Tneg{P{x}}$}
	\RightLabel{\Tdisjelim}
	\TrinaryInfC{$\Tforall_{x} P{x} \Tor \Texists_{x} \Tneg{P{x}}$}
\RightLabel{\Texistelim}
\BinaryInfC{$\Tforall_{x} P{x} \Tor \Texists_{x} \Tneg{P{x}}$}
\end{deduction}
\end{proof}
\end{proposition}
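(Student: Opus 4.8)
The plan is to feed the witness supplied by \DP/ into a case analysis provided by \LEM/. Concretely, \DP/ gives $\existsy(Py \mimp \forallx Px)$, and deciding whether the witness $y$ satisfies $P$ produces exactly the two disjuncts $\forallx Px$ and $\existsx \lnot Px$ of \GLPOA/.

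First I would write down the \DP/ instance $\existsy(Py \mimp \forallx Px)$ and begin an existential elimination on it, opening a fresh eigenvariable $y$ together with the local assumption $Py \mimp \forallx Px$. Working under this assumption, I would apply \LEM/ to the formula $Py$ to obtain $Py \mor \lnot Py$, and split into two cases by disjunction elimination.

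In the first case, from $Py$ and the assumed implication $Py \mimp \forallx Px$ a single modus ponens yields $\forallx Px$; injecting this on the left gives $\forallx Px \mor \existsx \lnot Px$. In the second case, from $\lnot Py$ an existential introduction gives $\existsx \lnot Px$, which injects on the right to give the same disjunction. Thus both branches conclude $\forallx Px \mor \existsx \lnot Px$, the inner disjunction elimination closes, and the outer existential elimination discharges the assumption about $y$.

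The only delicate point is the eigenvariable condition on the final existential elimination: the variable $y$ must not occur free in the conclusion nor in any undischarged assumption. Since $y$ occurs only in the assumption $Py \mimp \forallx Px$ (discharged by that very elimination step) and in the locally discharged \LEM/ instance $Py \mor \lnot Py$, whereas the conclusion $\forallx Px \mor \existsx \lnot Px$ binds only $x$, the condition is satisfied. I expect this variable bookkeeping to be the sole care-point; the reasoning is otherwise a direct two-case split. Note in particular that the argument never passes through $\bot$, so no \EFQ/ is invoked, and the reduction lives in minimal logic extended by \DP/ and \LEM/ alone.
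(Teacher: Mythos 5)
Your proposal is correct and matches the paper's proof essentially step for step: existential elimination on the \DP/ instance, \LEM/ applied to $Py$ inside it, and a two-case disjunction elimination yielding $\forall_x Px \lor \exists_x \lnot Px$ via modus ponens in one branch and existential introduction in the other. Your eigenvariable analysis is also the right care-point, with the minor clarification that the \LEM/ instance $Py \lor \lnot Py$ is a zero-premise rule application rather than a discharged assumption, which is precisely why it does not violate the condition on the final existential elimination.
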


\begin{proposition} \label{prop:DNSE,DZ,DO,DX->WLEM}
$\DNSEs, \TT/ \reduces \WLEMs$
\begin{proof}
Lemma 1:
\vspace{-\baselineskip}
\begin{prooftree}
\AxiomC{$\Tneg{\Texists_{x} \left(\left(D{x} \Tarrow \Tneg{\Tneg{A}}\right) \Tand \left(\Tneg{D{x}} \Tarrow \Tneg{A}\right)\right)}$}
	\AxiomC{}
	\UnaryInfC{$\left(D{0} \Tarrow \Tneg{\Tneg{A}}\right) \Tand \left(\Tneg{D{0}} \Tarrow \Tneg{A}\right)$}
	\RightLabel{\Texistintro}
	\UnaryInfC{$\Texists_{x} \left(\left(D{x} \Tarrow \Tneg{\Tneg{A}}\right) \Tand \left(\Tneg{D{x}} \Tarrow \Tneg{A}\right)\right)$}
\RightLabel{\Tarrowelim}
\BinaryInfC{$\bot$}
\RightLabel{\Tarrowintro}
\UnaryInfC{$\Tneg{\left(\left(D{0} \Tarrow \Tneg{\Tneg{A}}\right) \Tand \left(\Tneg{D{0}} \Tarrow \Tneg{A}\right)\right)}$}
	\AxiomC{}
	\UnaryInfC{$\Tneg{\Tneg{A}}$}
	\RightLabel{\Tarrowintro}
	\UnaryInfC{$D{0} \Tarrow \Tneg{\Tneg{A}}$}
		\AxiomC{}
		\UnaryInfC{$\Tneg{D{0}}$}
			\AxiomC{}
			\RightLabel{DZ}
			\UnaryInfC{$D{0}$}
		\RightLabel{\Tarrowelim}
		\BinaryInfC{$\bot$}
		\RightLabel{\Tarrowintro}
		\UnaryInfC{$\Tneg{A}$}
		\RightLabel{\Tarrowintro}
		\UnaryInfC{$\Tneg{D{0}} \Tarrow \Tneg{A}$}
	\RightLabel{\Tconjintro}
	\BinaryInfC{$\left(D{0} \Tarrow \Tneg{\Tneg{A}}\right) \Tand \left(\Tneg{D{0}} \Tarrow \Tneg{A}\right)$}
\RightLabel{\Tarrowelim}
\BinaryInfC{$\bot$}
\RightLabel{\Tarrowintro}
\UnaryInfC{$\Tneg{\Tneg{\Tneg{A}}}$}
\end{prooftree}
Lemma 2:
\vspace{-\baselineskip}
\begin{prooftree}
\AxiomC{$\Tneg{\Texists_{x} \left(\left(D{x} \Tarrow \Tneg{\Tneg{A}}\right) \Tand \left(\Tneg{D{x}} \Tarrow \Tneg{A}\right)\right)}$}
	\AxiomC{}
	\UnaryInfC{$\left(D{1} \Tarrow \Tneg{\Tneg{A}}\right) \Tand \left(\Tneg{D{1}} \Tarrow \Tneg{A}\right)$}
	\RightLabel{\Texistintro}
	\UnaryInfC{$\Texists_{x} \left(\left(D{x} \Tarrow \Tneg{\Tneg{A}}\right) \Tand \left(\Tneg{D{x}} \Tarrow \Tneg{A}\right)\right)$}
\RightLabel{\Tarrowelim}
\BinaryInfC{$\bot$}
\RightLabel{\Tarrowintro}
\UnaryInfC{$\Tneg{\left(\left(D{1} \Tarrow \Tneg{\Tneg{A}}\right) \Tand \left(\Tneg{D{1}} \Tarrow \Tneg{A}\right)\right)}$}
	\AxiomC{}
	\RightLabel{DO}
	\UnaryInfC{$\Tneg{D{1}}$}
		\AxiomC{}
		\UnaryInfC{$D{1}$}
	\RightLabel{\Tarrowelim}
	\BinaryInfC{$\bot$}
	\RightLabel{\Tarrowintro}
	\UnaryInfC{$\Tneg{\Tneg{A}}$}
	\RightLabel{\Tarrowintro}
	\UnaryInfC{$D{1} \Tarrow \Tneg{\Tneg{A}}$}
		\AxiomC{}
		\UnaryInfC{$\Tneg{A}$}
		\RightLabel{\Tarrowintro}
		\UnaryInfC{$\Tneg{D{1}} \Tarrow \Tneg{A}$}
	\RightLabel{\Tconjintro}
	\BinaryInfC{$\left(D{1} \Tarrow \Tneg{\Tneg{A}}\right) \Tand \left(\Tneg{D{1}} \Tarrow \Tneg{A}\right)$}
\RightLabel{\Tarrowelim}
\BinaryInfC{$\bot$}
\RightLabel{\Tarrowintro}
\UnaryInfC{$\Tneg{\Tneg{A}}$}
\end{prooftree}
Lemma 3:
\vspace{-\baselineskip}
\begin{prooftree}
\AxiomC{$\Tneg{\Tneg{\left(\left(D{x} \Tarrow \Tneg{\Tneg{A}}\right) \Tand \left(\Tneg{D{x}} \Tarrow \Tneg{A}\right)\right)}}$}
	\AxiomC{}
	\UnaryInfC{$\left(D{x} \Tarrow \Tneg{\Tneg{A}}\right) \Tand \left(\Tneg{D{x}} \Tarrow \Tneg{A}\right)$}
		\AxiomC{}
		\UnaryInfC{$D{x} \Tarrow \Tneg{\Tneg{A}}$}
			\AxiomC{$D{x}$}
		\RightLabel{\Tarrowelim}
		\BinaryInfC{$\Tneg{\Tneg{A}}$}
			\AxiomC{}
			\UnaryInfC{$\Tneg{A}$}
		\RightLabel{\Tarrowelim}
		\BinaryInfC{$\bot$}
	\RightLabel{\Tconjelim}
	\BinaryInfC{$\bot$}
	\RightLabel{\Tarrowintro}
	\UnaryInfC{$\Tneg{\left(\left(D{x} \Tarrow \Tneg{\Tneg{A}}\right) \Tand \left(\Tneg{D{x}} \Tarrow \Tneg{A}\right)\right)}$}
\RightLabel{\Tarrowelim}
\BinaryInfC{$\bot$}
\RightLabel{\Tarrowintro}
\UnaryInfC{$\Tneg{\Tneg{A}}$}
\RightLabel{\Tdisjintro}
\UnaryInfC{$\Tneg{A} \Tor \Tneg{\Tneg{A}}$}
\end{prooftree}

\clearpage

Lemma 4:
\vspace{-\baselineskip}
\begin{prooftree}
\AxiomC{$\Tneg{\Tneg{\left(\left(D{x} \Tarrow \Tneg{\Tneg{A}}\right) \Tand \left(\Tneg{D{x}} \Tarrow \Tneg{A}\right)\right)}}$}
	\AxiomC{}
	\UnaryInfC{$\left(D{x} \Tarrow \Tneg{\Tneg{A}}\right) \Tand \left(\Tneg{D{x}} \Tarrow \Tneg{A}\right)$}
		\AxiomC{}
		\UnaryInfC{$\Tneg{A}$}
			\AxiomC{}
			\UnaryInfC{$A$}
		\RightLabel{\Tarrowelim}
		\BinaryInfC{$\bot$}
		\RightLabel{\Tarrowintro}
		\UnaryInfC{$\Tneg{\Tneg{A}}$}
			\AxiomC{}
			\UnaryInfC{$\Tneg{D{x}} \Tarrow \Tneg{A}$}
				\AxiomC{$\Tneg{D{x}}$}
			\RightLabel{\Tarrowelim}
			\BinaryInfC{$\Tneg{A}$}
		\RightLabel{\Tarrowelim}
		\BinaryInfC{$\bot$}
	\RightLabel{\Tconjelim}
	\BinaryInfC{$\bot$}
	\RightLabel{\Tarrowintro}
	\UnaryInfC{$\Tneg{\left(\left(D{x} \Tarrow \Tneg{\Tneg{A}}\right) \Tand \left(\Tneg{D{x}} \Tarrow \Tneg{A}\right)\right)}$}
\RightLabel{\Tarrowelim}
\BinaryInfC{$\bot$}
\RightLabel{\Tarrowintro}
\UnaryInfC{$\Tneg{A}$}
\RightLabel{\Tdisjintro}
\UnaryInfC{$\Tneg{A} \Tor \Tneg{\Tneg{A}}$}
\end{prooftree}
Where $\Phi := \Tneg{\Tneg{\Texists_{x} \left(\left(D{x} \Tarrow
\Tneg{\Tneg{A}}\right) \Tand \left(\Tneg{D{x}} \Tarrow \Tneg{A}\right)\right)}}
\Tarrow \Texists_{x} \Tneg{\Tneg{\left(\left(D{x} \Tarrow \Tneg{\Tneg{A}}\right)
\Tand \left(\Tneg{D{x}} \Tarrow \Tneg{A}\right)\right)}}$,
\vspace{1.5\baselineskip}
\begin{deduction}
\AxiomC{}
\RightLabel{DNSE}
\UnaryInfC{$\Phi$}
	\AxiomC{}
	\UnaryInfC{Lemma 1}
		\AxiomC{}
		\UnaryInfC{Lemma 2}
	\RightLabel{\Tarrowelim}
	\BinaryInfC{$\bot$}
	\RightLabel{\Tarrowintro}
	\UnaryInfC{$\Tneg{\Tneg{\Texists_{x} \left(\left(D{x} \Tarrow \Tneg{\Tneg{A}}\right) \Tand \left(\Tneg{D{x}} \Tarrow \Tneg{A}\right)\right)}}$}
\RightLabel{\Tarrowelim}
\BinaryInfC{$\Texists_{x} \Tneg{\Tneg{\left(\left(D{x} \Tarrow \Tneg{\Tneg{A}}\right) \Tand \left(\Tneg{D{x}} \Tarrow \Tneg{A}\right)\right)}}$}
	\AxiomC{}
	\RightLabel{DX}
	\UnaryInfC{$\Tforall_{x} \left(D{x} \Tor \Tneg{D{x}}\right)$}
	\RightLabel{\Tunivelim}
	\UnaryInfC{$D{x} \Tor \Tneg{D{x}}$}
		\AxiomC{}
		\UnaryInfC{Lemma 3}
			\AxiomC{}
			\UnaryInfC{Lemma 4}
	\RightLabel{\Tdisjelim}
	\TrinaryInfC{$\Tneg{A} \Tor \Tneg{\Tneg{A}}$}
\RightLabel{\Texistelim}
\BinaryInfC{$\Tneg{A} \Tor \Tneg{\Tneg{A}}$}
\end{deduction}
\end{proof}
\end{proposition}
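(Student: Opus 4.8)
The plan is to apply the embedding technique of this section, reducing the propositional scheme $\WLEMi{A}$ to an instance of \DNSE/ over a predicate that packages $A$ together with the distinguishing predicate $D$ supplied by \TT/. Concretely I would set
\[
	Px := (Dx \mimp \lnot\lnot A) \mand (\lnot Dx \mimp \lnot A),
\]
so that at the constant $0$, where $D0$ holds by \DZ/, the predicate carries the information $\lnot\lnot A$, while at the constant $1$, where $\lnot D1$ holds by \DO/, it carries $\lnot A$. The argument then splits into three phases: first derive $\lnot\lnot\existsx Px$ outright in two-termed minimal logic, then feed this into $\DNSEi{Px}$ to obtain $\existsx\lnot\lnot Px$, and finally extract $\WLEMi{A}$ from a witness using the decidability of $D$ from \DX/.

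For the first phase I would assume $\lnot\existsx Px$ and derive $\bot$ in two ways. Instantiating at $1$: from a temporary hypothesis $\lnot A$ one constructs $P1$, since its first conjunct $D1 \mimp \lnot\lnot A$ holds vacuously ($\lnot D1$ from \DO/ refutes the antecedent, and $\bot \mimp \lnot\lnot A$ is available in minimal logic) and its second conjunct follows from $\lnot A$; this witnesses $\existsx Px$, contradicting the assumption, so discharging $\lnot A$ gives $\lnot\lnot A$. Symmetrically, instantiating at $0$: from $\lnot\lnot A$ one constructs $P0$ (now the second conjunct is vacuous because $D0$ from \DZ/ refutes $\lnot D0$), again contradicting $\lnot\existsx Px$ and yielding $\lnot\lnot\lnot A$. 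As $\lnot\lnot A$ and $\lnot\lnot\lnot A$ are contradictory, we obtain $\bot$, and discharging the outer assumption gives $\lnot\lnot\existsx Px$.

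Applying $\DNSEi{Px}$ then produces $\existsx\lnot\lnot Px$. To conclude I would eliminate the existential to obtain a witness $y$ with $\lnot\lnot Py$ and case-split on $Dy \mor \lnot Dy$ (from \DX/). In the $Dy$ branch the first conjunct of $Py$ makes $Py$ imply $\lnot\lnot A$; in the $\lnot Dy$ branch the second conjunct makes $Py$ imply $\lnot A$. In each branch the conclusion is a negation, hence $\lnot\lnot$-stable, so it can already be recovered from $\lnot\lnot Py$ alone, delivering one of the two disjuncts of $\WLEMi{A}$. Both branches therefore produce $\lnot A \mor \lnot\lnot A$, and $\mor$-elimination followed by $\exists$-elimination completes the derivation.

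I expect the first phase to be the crux. The key observation is that $\lnot\existsx Px$ simultaneously forces $\lnot\lnot A$ (through the $\lnot A$-carrying term $1$) and $\lnot\lnot\lnot A$ (through the $\lnot\lnot A$-carrying term $0$), and that these contradict one another; this is exactly where the weak distinction between two terms provided by \TT/ is indispensable, as a single term cannot house both polarities at once. A secondary subtlety is that \DNSE/ returns its information only beneath a double negation, so the final case analysis must operate on $\lnot\lnot Py$ rather than $Py$; this is harmless only because both disjuncts of \WLEM/ are negations and so are $\lnot\lnot$-stable, via the minimal-logic reduction of $\lnot\lnot\lnot C$ to $\lnot C$. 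Throughout I would keep every appeal to $\bot$ in the benign shape $\bot \mimp \lnot C$, provable without \EFQ/, so that the reduction genuinely uses only \TT/, as the statement $\DNSEs, \TT/ \reduces \WLEMs$ claims.
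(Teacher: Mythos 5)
Your proposal is correct and matches the paper's proof essentially step for step: the same predicate $(Dx \mimp \lnot\lnot A) \mand (\lnot Dx \mimp \lnot A)$, the same derivation of $\lnot\lnot\existsx Px$ by instantiating at $1$ (giving $\lnot\lnot A$) and at $0$ (giving $\lnot\lnot\lnot A$) under the assumption $\lnot\existsx Px$, the same application of \DNSE/, and the same final case split on $Dx \mor \lnot Dx$ using the $\lnot\lnot$-stability of negations (the paper's Lemmas 3 and 4). No gaps; the vacuous-discharge uses of $\bot \mimp \lnot C$ you flag are exactly how the paper avoids \EFQ/.
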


\end{landscape}

\end{document}